\newcommand{\Lg}{\mbox{$\mathfrak g$}}
\newcommand{\Lk}{\mbox{$\mathfrak k$}}
\renewcommand{\Lp}{\mbox{$\mathfrak p$}}
\newcommand{\La}{\mbox{$\mathfrak a$}}
\newcommand{\Lh}{\mbox{$\mathfrak h$}}
\newcommand{\Lt}{\mbox{$\mathfrak t$}}
\newcommand{\Ll}{\mbox{$\mathfrak l$}}
\newcommand\fieldsetc{\mathbb}
\newcommand{\Z}{\fieldsetc{Z}}
\newcommand{\R}{\fieldsetc{R}}
\newcommand{\C}{\fieldsetc{C}}
\newcommand{\Q}{\fieldsetc{H}}
\newcommand{\B}{\mathcal{B}}
\newcommand{\ad}{\mathrm{ad}}
\newcommand{\Ad}{\mathrm{Ad}}
\newtheorem{thm}{Theorem}[section]
\newtheorem{cor}[thm]{Corollary}
\newtheorem{prop}[thm]{Proposition}
\newtheorem{lem}[thm]{Lemma}
\theoremstyle{remark}
\newtheorem{rmk}[thm]{Remark}
\newtheorem{eg}[thm]{Example}
\newcommand{\Pf}{{\em Proof}. }
\newcommand{\EPf}{\hfill$\square$}
\title{Almost symmetric submanifolds}
\author{Claudio Gorodski and Carlos Olmos}\thanks{The first named author
acknowledges partial support from FAPESP Thematic Grant
22/16097-2 and CNPq Fellowship 304252/2021-2, Brazil. The second named
author acknowledges partial support by FaMAF,
Universidad Nacional de Córdoba and CIEM-CONICET, Argentina.}
\subjclass{53C40, 53C42, 53C35}
\keywords{Submanifolds with symmetries; s-representations; cohomogeneity one metrics; sub-Riemannian symmetric spaces}
\begin{document}

\maketitle

\begin{abstract}
We introduce the class of almost symmetric submanifolds
of Euclidean space, a close relative of symmetric submanifolds and
(contact) sub-Riemannian symmetric spaces. More specifically:
\begin{itemize}
\item Homogeneous classification: We prove that
  every full irreducible almost symmetric
  submanifold of Euclidean space is either: a most singular orbit of
  an s-representation; or an almost singular orbit, which can be realized as
  a holonomy tube over a symmetric submanifold;
  or a codimension $3$ submanifold. We include tables of all
  examples with Lie-theoretic data. 
\item Inhomogeneous structure: We prove that any inhomogeneous almost symmetric
  submanifold has cohomogeneity one and describe possible structures, including
  multiply-warped products. 
\item Connection with sub-Riemannian geometry: We interpret almost symmetric submanifolds as embeddings of sub-Riemannian symmetric spaces, highlighting the interplay between extrinsic and intrinsic symmetry.
  \item New invariant: We propose the co-index of extrinsic symmetry as a potential tool to study and hierarchize highly symmetric submanifolds.
\end{itemize}
\end{abstract}

\section{Introduction}

A submanifold of a Euclidean space is called \emph{(extrinsically) symmetric}
if it is reflectionally symmetric with respect to any affine normal
space. Symmetric submanifolds were introduced and classified
by Ferus~\cite{Fe}, who locally characterized them by the
condition that the second fundamental form be parallel (see also~\cite{St,ET}).
In particular Ferus showed that symmetric submanifolds are homogeneous and
arise as special orbits of the isotropy representation of a symmetric space,
or \emph{s-representation}, for short. Symmetric submanifolds
split extrinsically, and the irreducible ones are classified
by the work of Kobayashi and Nagano~\cite{KN}. Symmetric submanifolds enjoy a rich interplay between extrinsic geometry and representation theory, as the geometry is closely tied to the structure of the isotropy representation.

The goal of this paper is to introduce a related class of Euclidean
submanifolds and obtain similar structure and classification results.
An \emph{(extrinsic) almost symmetry} of a Euclidean submanifold
$M$ at a point $p$
in $M$ is an involutive ambient isometry that preserves $M$,
fixes the affine normal space at~$p$ pointwise, and
whose fixed point set in the tangent
space has dimension one.
A submanifold will be called \emph{(extrinsically)
almost symmetric} if it admits an almost symmetry at every point.
This notion interpolates between fully symmetric submanifolds
and certain contact sub-Riemannian symmetric spaces (see below). 

Almost symmetric submanifolds are interesting because, despite relaxing full symmetry, they retain strong rigidity and homogeneity properties in many cases, and in general have at most cohomogeneity one. In particular, any homogeneous almost symmetric submanifold can be viewed as a compatible embedding of a sub-Riemannian symmetric space into Euclidean space, where the restriction of the almost symmetry gives a sub-symmetry in the sense of Strichartz~\cite{St}. Sub-Riemannian symmetric spaces have been classified in low dimensions~\cite{St,FG2,Al} and for contact structures~\cite{FG,BFG}. Our work shows that most homogeneous examples of almost symmetric submanifolds arise from embeddings of contact sub-Riemannian symmetric spaces.

Recall that a \emph{sub-Riemannian manifold} is a smooth manifold $M$ carrying
a distribution $\mathcal D$ (a subbundle of the tangent
bundle $TM$)
and a smooth Riemannian metric $g$ defined only on vectors in~$\mathcal D$.
A sub-Riemannian symmetric space is a Sub-Riemannian manifold
that admits an involutive isometry at each point, called a sub-symmetry, that preserves the $\mathcal D$ and whose differential is minus the identity on $\mathcal{D}$~\cite{St}. When $\mathcal{D}$ is contact, homogeneity follows from the existence of sub-symmetries~\cite{FG}. Homogeneous almost symmetric submanifolds provide concrete realizations of certain contact sub-Riemannian symmetric
spaces inside Euclidean space. 

More generally, one can define the
\emph{co-index of extrinsic symmetry} of $M$ at $p$ as the smallest
integer $k$ such that there is an involutive ambient isometry preserving
$M$, fixing the affine normal space
at $p$ pointwise, and whose fixed point set in the tangent
space has dimension~$k$. An interesting problem is to hierarchize
submanifolds in terms of this invariant.

The first author wishes to thank Wolfgang Ziller for useful discussions.

\section{Main results}

Next we state the main results in this paper. We first
enunciate structure and classification theorems in the homogeneous case. Throughout the paper homogeneous submanifold always stands for 
extrinsically homogeneous submanifold
(see section~\ref{prelim} for unexplained notation and terminology). 

\begin{thm}\label{thm:struct}
  Let $M$ be a full irreducible compact homogeneous 
submanifold of Euclidean space of codimension at least two. 
Assume that $M$ is an almost symmetric submanifold. 
Then:
\begin{enumerate}
    \item If the codimension of $M$ is greater that three, then $M$ is an orbit of an irreducible $s$-representation.
    \item If $M$ is an orbit of an $s$-representation, then $M$ is either
      a most singular orbit or an almost most singular orbit. 
    \item If $M$ is an almost most singular orbit of an $s$-representation, then it is a partial holonomy tube $(\bar M)_\xi$ of an symmetric submanifold $\bar M$. Moreover, the normal vector $\xi$ belongs to a  $2$-dimensional irreducible factor of the normal holonomy of $\bar M$. Conversely, any holonomy tube over
      a symmetric submanifold through a sufficiently small vector in a two dimensional factor of the normal holonomy is an almost symmetric
      submanifold.
    \item If $M$ is not an almost most singular orbit of an $s$-representation, then there  exists a unique almost symmetry $\sigma _p$ at $p$, for all $p\in M$. In particular $\sigma _p$ is invariant under the isotropy group at~$p$. 
      Further, there are examples where the almost symmetry is not unique.
\end{enumerate}
\end{thm}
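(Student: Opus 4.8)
The plan is to reduce the problem to linear algebra at the point $p$ and then to exploit the structure theory established in parts (a)--(c). An almost symmetry $\sigma_p$ is an involutive ambient isometry fixing $p$ (which lies in the affine normal space it fixes pointwise), so it is the affine map $x\mapsto p+A(x-p)$ determined by the orthogonal involution $A:=d\sigma_p$ of $\mathbb{R}^N=T_pM\oplus\nu_pM$. The defining conditions force $A|_{\nu_pM}=\mathrm{id}$, while $A|_{T_pM}$ is an involution with one-dimensional $(+1)$-eigenspace $\ell$ and complementary hyperplane $H=\ell^\perp\cap T_pM$ as $(-1)$-eigenspace. Since $\sigma_p$ preserves $M$ and fixes the normals, it preserves the second fundamental form $\alpha$, so $\alpha(X,Y)=\alpha(AX,AY)$ for all $X,Y\in T_pM$; dualizing gives $A\,S_\eta=S_\eta\,A$ for every shape operator $S_\eta$, $\eta\in\nu_pM$. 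Hence both $\ell$ and $H$ are invariant under the whole family $\{S_\eta\}$, and classifying almost symmetries at $p$ amounts to describing the lines $\ell\subset T_pM$ that are common invariant lines of the shape operators and whose associated involution genuinely preserves $M$. Note that $A$, and therefore $\sigma_p$, is completely determined by $\ell$.

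For uniqueness I would argue by contradiction. If $\sigma_p,\sigma_p'$ are almost symmetries at $p$ with lines $\ell,\ell'$, then $\ell=\ell'$ already gives $\sigma_p=\sigma_p'$, so suppose $\ell\neq\ell'$ and set $\tau:=\sigma_p\circ\sigma_p'$. Then $\tau$ is an ambient isometry preserving $M$, fixing $p$, fixing $\nu_pM$ pointwise, acting as the identity on $(\ell+\ell')^\perp\cap T_pM$, and acting on the tangent $2$-plane $P:=\ell+\ell'$ as the product of the reflections fixing $\ell$ and $\ell'$, i.e.\ as a nontrivial rotation. The key step is then to prove that the existence of such a nontrivial $\tau$ --- an ambient isometry rotating a tangent $2$-plane while fixing the whole normal space and the remaining tangent directions --- forces $M$ to be an almost most singular orbit of an $s$-representation: the rotated plane $P$ is precisely the signature of a $2$-dimensional irreducible factor of the normal holonomy, with the $(+1)$-line $\ell$ playing the role of the vertical/radial direction of the holonomy tube of part (c). This is the crux and the \emph{main obstacle}; it requires the fine description of the shape operators and the normal holonomy of (most) singular orbits, drawing on parts (a)--(c). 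Granting it, the hypothesis that $M$ is not almost most singular rules out any such $\tau$, whence $\ell=\ell'$ and $\sigma_p$ is unique.

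Once uniqueness is known, the isotropy invariance is immediate: for $h$ in the isotropy group $H_p$ at $p$, the conjugate $h\sigma_p h^{-1}$ is again involutive, preserves $M$, fixes the affine normal space at $p$ pointwise (as $h$ maps that space to itself), and has one-dimensional tangential fixed set $dh(\ell)$; by uniqueness $h\sigma_p h^{-1}=\sigma_p$, so $\sigma_p$ is fixed by $H_p$. Finally, for the non-uniqueness assertion I would exhibit explicit examples among the almost most singular orbits of part (c): a partial holonomy tube $(\bar M)_\xi$ over a symmetric submanifold $\bar M$, with $\xi$ in a $2$-dimensional irreducible normal-holonomy factor $V$. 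At such a point $V$ splits as $\mathbb{R}\xi\oplus\ell$ with $\xi$ normal and $\ell$ tangent to the circle fibre, and the $\mathrm{SO}(2)$ acting on $V$ is realized by ambient isometries fixing $p$ and rotating the corresponding tangent $2$-plane; conjugating $\sigma_p$ by this one-parameter group produces a whole circle of distinct admissible lines, hence infinitely many almost symmetries at the same point. This is exactly the mechanism --- the nontrivial $\tau$ of the previous paragraph --- that is present precisely in the almost most singular case and absent otherwise, so the two halves of the statement are governed by the same equivalence between tangential rotations and $2$-dimensional normal-holonomy factors.
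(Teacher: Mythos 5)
Your proposal only engages with part~(d) of the theorem: parts (a)--(c) are invoked as ``structure theory established in parts (a)--(c)'' but never argued, so three quarters of the statement is simply assumed. Within part~(d) the linear-algebra setup is fine (an almost symmetry at $p$ is determined by its fixed line $\ell\subset T_pM$, two distinct ones compose to a rotation $\tau$ of the plane $\ell+\ell'$ fixing $\nu_pM$ and the rest of $T_pM$, and uniqueness yields isotropy-invariance by conjugation). But the entire content of part~(d) is the step you label ``the crux and the main obstacle'' and then grant: that such a nontrivial $\tau$ forces $M$ to be an almost most singular orbit. You give no argument for it, and it does not follow from anything in (a)--(c). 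In the paper this splits into two genuinely different cases. If $M$ is not an s-orbit, two distinct almost symmetries lie in the kernel $H$ of the slice representation with trivial common fixed set in $T_pM$, which forces the difference tensor $D^\perp=\nabla^\perp-\tilde\nabla^\perp$ to vanish and, by the Olmos--S\'anchez theorem, makes $M$ an s-orbit --- a contradiction. If $M$ is a most singular s-orbit, the argument is harder: the span $V(q)$ of the fixed lines of all almost symmetries at $q$ is shown to be a proper, invariant, simultaneously diagonalizing subspace, hence contained in the relative nullity distribution of $M$ in its sphere; Vittone's theorem on the complete non-integrability of the conullity then contradicts the integrability of $V^\perp$. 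Neither of these mechanisms is the ``2-dimensional normal holonomy factor'' picture you sketch, and without one of them the proof does not close.

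The non-uniqueness example is also not established by your mechanism. The elements of $SO(2)$ acting on the $2$-dimensional holonomy factor $V=\R\xi\oplus\ell$ move $\xi$, hence move the point $p=q+\xi$ of the holonomy tube; conjugating $\sigma_p$ by them produces almost symmetries at \emph{other} points, not a circle of almost symmetries at $p$. The paper's actual example is the principal orbit in the $A_q$ maximal-rank case, which is a circle bundle over each of the \emph{two} extrinsic symmetric focal orbits $G_2(\R^{q+1})$; the two focal symmetries restrict to two distinct almost symmetries at the same point. So both halves of part~(d) need arguments that are absent from the proposal.
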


\begin{thm}\label{thm:classif}
\begin{itemize}
\item[(a)] Every full irreducible almost symmetric orbit $M=Ka$ of the
s-representation associated to a symmetric pair $(\Lg,\Lk)$ is 
listed in the following table. Each $M$ is a circle bundle over $M/S^1$. 
{\footnotesize
\[ \begin{array}{cccccc}
Nr. & M/S^1 & \Lg & \Lk & \Lk_a & \mbox{Conditions}\\
\hline
1&\C P^{q-1} & \mathfrak{sp}(q) & \mathfrak u(q) & \mathfrak{u}(q-1) & q\geq2\\
2&S^2\times S^2 & \mathfrak g_2 & \mathfrak{so}(4) & \mathfrak{so}(2) \\
3&Sp(3)/U(3) \times S^2& \mathfrak{f}_4 & \mathfrak{sp}(3)\oplus\mathfrak{sp}(1) & \mathfrak{su}(3) \oplus\mathfrak{u}(1)\\
4&Sp(4)/U(4) & \mathfrak{e}_6 & \mathfrak{sp}(4) & \mathfrak{su}(4) \\
5&SU(8)/S(U(4)\times U(4)) & \mathfrak{e}_7 & \mathfrak{su}(8) & \mathfrak{su}(4)\oplus\mathfrak{su}(4) \\
6&SO(16)/U(8) & \mathfrak{e}_8 & \mathfrak{so}(16) & \mathfrak{su}(8) \\
7&G_2(\R^{q+1}) & \mathfrak{su}(q+1) & \mathfrak{so}(q+1) & \mathfrak{so}(q-1) & q\geq2\\
8&S^2 \times S^1  & \mathfrak{so}(5) & \mathfrak{so}(3) \oplus \mathfrak {so}(2) 
& \{0\}\\
9&\C P^{p-1} \times \C P^{q-1} & \mathfrak{su}(p+q)
&\mathfrak{s}(\mathfrak{u}(p)\oplus\mathfrak{u}(q)) 
&\mathfrak{s}(\mathfrak{u}(p-1)\oplus\mathfrak{u}(q-1)\oplus\mathfrak{u}(1)) &p\geq q\geq2\\ 
10&E_6/\{[Spin(10)\times U(1)]/\Z_4\} & \mathfrak{e}_7 & \mathfrak{e}_6\oplus\mathfrak{u}(1) & \mathfrak{so}(10)\oplus\mathfrak{u}(1)\\ 
11&Spin(10)/U(5) & \mathfrak{e}_6 & \mathfrak{so}(10)\oplus\mathfrak{u}(1)&
\mathfrak{su}(5)\oplus\mathfrak{u}(1)\\
12&G_2(\R^p)\times G_2(\R^q) & \mathfrak{so}(p+q) & \mathfrak{so}(p)\oplus\mathfrak{so}(q)
& \mathfrak{so}(p-2)\oplus\mathfrak{so}(q-2)\oplus\mathfrak{so}(2)& p\geq q\geq2\\
13&G_3(\C^6)\times\C P^1 & \mathfrak{e}_6 & \mathfrak{su}(6)\oplus\mathfrak{su}(2) & \mathfrak{s}(\mathfrak{u}(3)\oplus\mathfrak{u}(3)) \\
14&SO(12)/U(6)\times\C P^1 & \mathfrak{e}_7 & \mathfrak{spin}(12)\oplus\mathfrak{su}(2) & \mathfrak{u}(6) \\
15&E_7/\{[E_6\times U(1)]/\Z_3\}\times \C P^1 & \mathfrak{e}_8 & \mathfrak{e}_7\oplus\mathfrak{su}(2) & \mathfrak{e}_6\oplus\mathfrak{u}(1)\\
\end{array} 
 \]
}
\item[(b)]  Every full irreducible compact 
almost symmetric homogeneous submanifold $M$ of Euclidean space which is not 
an orbit of an s-representation is an arbitrary principal orbit of $\rho$, where $\rho$ is one of the 
following three reducible representations of cohomogeneity three:
{\footnotesize
  \[ \begin{array}{cccc}
Nr. & M & \rho & \mbox{Conditions}\\
\hline 
16 & V_2(\R^n) & (SO(n),\R^n\oplus\R^n) & n\geq3 \\
17 & U(2) & (U(2),\C^2\oplus\R^3) & -\\
18 & T^2 \times S^3  &(U(1) \times SU(2) \times U(1),\C^2\oplus\C^2) & - \\
  \end{array} \]
  }
\end{itemize}
\end{thm}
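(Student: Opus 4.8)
The plan is to combine the structure theorem (Theorem~\ref{thm:struct}) with a case-by-case analysis over Cartan's list of symmetric pairs. A remark used throughout: at each $p$ the almost symmetry $\sigma_p$ fixes a line $\ell_p\subset T_pM$ and acts as $-1$ on its orthogonal complement $\mathcal D_p$. By homogeneity the line field $\ell$ is invariant under the ambient transitive group, and since $M$ is compact its integral curves close up, so $M$ becomes an $S^1$-bundle $M\to M/S^1$. As $\sigma_p$ acts as $-1$ on $\mathcal D_p$, it descends to a geodesic symmetry of the base, whence $M/S^1$ is a symmetric space; compatibility of $\sigma_p$ with the ambient metric turns $\mathcal D$ into a (contact) sub-Riemannian symmetric structure and thereby constrains $M/S^1$ to be Hermitian symmetric up to a flat factor. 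This explains the shape of the bases in the table and is the geometric backbone of the proof.

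For part~(a), Theorem~\ref{thm:struct}(2) asserts that $M=Ka$ is either a most singular or an almost most singular orbit of the $s$-representation of $(\Lg,\Lk)$, and I would treat these separately. In the most singular case one reads off the isotropy $\Lk_a$ for each pair and asks whether $K_a$ contains the required involution, namely one acting trivially on the normal space $\nu_a$ and as a reflection (fixed set of dimension one) on $T_a(Ka)$; this is a finite check on each isotropy representation, and only the pairs admitting such an element survive, producing the corresponding rows of the table.

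The almost most singular case is governed by Theorem~\ref{thm:struct}(3): here $M=(\bar M)_\xi$ is a partial holonomy tube over a symmetric submanifold $\bar M$ with $\xi$ in a two-dimensional irreducible factor of the normal holonomy, and conversely every such tube is almost symmetric. The procedure is therefore to list the symmetric submanifolds $\bar M$ (classified by Ferus~\cite{Fe} and Kobayashi--Nagano~\cite{KN}), to compute each normal holonomy representation by means of the Normal Holonomy Theorem, to retain those possessing a two-dimensional irreducible factor (necessarily an $SO(2)$ rotation factor, attached to a complex structure on a piece of the normal space), and then to form the tube and record $M$ together with its base $M/S^1$. This accounts for the bulk of the table, including the infinite families.

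For part~(b), since $M$ is not an $s$-representation orbit, Theorem~\ref{thm:struct}(1) forces the codimension to be two or three, and one studies such low-codimension homogeneous almost symmetric submanifolds by hand: the induced circle bundle structure together with the requirement that the isotropy group contain the reflection-type involution sharply restricts the admissible transitive actions, and a direct analysis singles out exactly the three reducible cohomogeneity-three representations listed, with principal orbits $V_2(\R^n)$, $U(2)$ and $T^2\times S^3$. I expect the main obstacle to be the almost most singular case of part~(a): determining the normal holonomy of every symmetric submanifold and isolating precisely those with a two-dimensional irreducible factor is a substantial representation-theoretic computation spanning the classical and exceptional series. A secondary difficulty is certifying that the list in part~(b) is complete, since there the $s$-representation machinery is unavailable and one must rely on arguments special to codimensions two and three.
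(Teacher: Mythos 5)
Your overall architecture (a case-by-case sweep over Cartan's list guided by Theorem~\ref{thm:struct}) is reasonable, and your treatment of the almost most singular case via Theorem~\ref{thm:struct}(3) and holonomy tubes over symmetric submanifolds is a legitimate alternative to the paper's uniform root-space analysis. But there is a genuine gap in your treatment of the most singular case of part~(a): you propose to check whether the isotropy group $K_a$ contains an involution acting trivially on $\nu_a M$ and with one-dimensional fixed set on $T_a(Ka)$. An almost symmetry, however, is an \emph{ambient} involutive isometry preserving $M$; it need not lie in $K$, nor even in $K_a$. The entire point of the paper's Section~\ref{sec:reduct} (Lemmas~\ref{1}, \ref{2} and~\ref{0}) is to show that such an isometry is the restriction to $\Lp$ of some $\tau\in\Aut(\Lg)^\sigma$ with $\tau|_{\mathfrak a}=\mathrm{id}$, and then to use the Satake diagram to decide when $\tau$ can be outer or can come from $N_G(K)\setminus K$. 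Several of the case analyses (AIII, BDI, DIII, CII with the map $x\mapsto x^\perp$, EIII with the involution fixing $F_4$) hinge on ruling out precisely these non-inner candidates; without that reduction your table cannot be certified complete. A related unproved assertion is your claim that the sub-Riemannian structure forces $M/S^1$ to be Hermitian symmetric up to a flat factor: in the paper this is an a posteriori observation about the finished table (and it fails as stated for Nr.~8), not an input to the classification. Likewise, your argument that the integral curves of the fixed line field close up ``because $M$ is compact'' is not valid as stated (compare an irrational line field on a torus); closedness comes from the leaf being a connected component of the fixed point set of the involution, hence a closed totally geodesic one-dimensional submanifold.

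For part~(b) your sketch omits the actual mechanism. One must first rule out codimension two: a compact full irreducible homogeneous submanifold of codimension two is a principal orbit of a representation of cohomogeneity at most two, hence polar, hence orbit-equivalent to an s-representation by Dadok's theorem, contradicting the hypothesis. So the codimension is exactly three and $M$ is a principal orbit of a cohomogeneity-three representation; these are classified (Straume), all have copolarity one, and the candidate almost symmetry is the reflection in the four-dimensional generalized section $\Sigma$. Completeness of the list of three examples then comes from checking, case by case, whether $\Lg_p\oplus\R$ is a symmetric subalgebra of $\Lg$, equivalently whether that reflection normalizes $G$ and hence preserves its orbits. ``A direct analysis singles out exactly the three'' is not a proof without these ingredients.
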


We make a few comments about the above tables.
Examples Nr.~7 and~12 provide different embeddings of the Stiefel manifold
$V_2(\R^n)$ into Euclidean space as an almost symmetric submanifold.

All examples other than Nr.~8 are intrinsically 
contact sub-Riemannian symmetric spaces:
the almost symmetry restricts to a sub-symmetry, and it 
passes to the circle quotient defining a structure of symmetric space
in the quotient; further there is a contact CR structure on $M$ that
defines a K\"ahler structure on the quotient, which then becomes
a compact Hermitian symmetric space. Example Nr.~8 is
intrinsically an odd-contact sub-Riemannian symmetric space of dimension~$4$
(cf.~\cite{FG2}). 

Finally, we consider the inhomogeneous case and prove that 
inhomogeneous almost symmetric
submanifolds must be of cohomogeneity one, 
already in the intrinsic case (see section~\ref{inhomog} for the 
definition). 

\begin{thm}\label{thm:inhomog}
Let $M$ be a complete almost symmetric space, denote by $K$
the closure of the
group of isometries of $M$ generated by the almost symmetries of $M$,
and assume that $K$ is not transitive on $M$. Then:
\begin{enumerate}
\item $K$ acts on $M$ with cohomogeneity $1$.
\item In the open and dense subset $\Omega$ of $M$ consisting of regular 
points of the $K$-action, the almost symmetry at a point is unique. 
\item For a $K^0$-orbit $N$ of codimension $k\geq2$, there is a
  connected normal subgroup $H$ of $K^0$ which acts trivially
  on $N$ and whose isotropy representation at any $p\in N$
  on $T_pM=T_pN\oplus\nu_pN$ is $\mathrm{Id}\times SO(\nu_pN)$.
  In particular, the isotropy algebra $\Lk_p$ contains an ideal
  isomorphic to $\mathfrak{so}_k$. 
\item The connected components of the singular $K$-orbits are 
totally geodesic. 
\item The connected components of the $K$-orbits are (intrinsic)
symmetric spaces.
\end{enumerate}
\end{thm}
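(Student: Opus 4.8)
The plan is to derive the entire theorem from a single \emph{transvection lemma} forcing the $K$-orbits to have codimension at most one. Fix $p\in M$, a chosen almost symmetry $\sigma_p$, and split $T_pM=L_p\oplus\mathcal D_p$ into the $(+1)$-eigenline and the $(-1)$-eigenhyperplane of $d(\sigma_p)_p$, with $\xi_p$ a unit generator of $L_p$. Two elementary observations organize everything. First, the $\sigma_p$-fixed geodesic $\gamma_p(t)=\exp_p(t\xi_p)$ is fixed pointwise, and since $d\sigma_p$ commutes with parallel transport along $\gamma_p$, the map $\sigma_p$ is again an almost symmetry at every point of $\gamma_p$, with $(-1)$-eigenhyperplane the parallel transport of $\mathcal D_p$. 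Second, $\sigma_p$ reverses every geodesic tangent to $\mathcal D_p$, and since conjugation carries almost symmetries to almost symmetries, each $\sigma_q$ normalizes $K$; hence every almost symmetry preserves the whole $K$-orbit decomposition. Writing $\theta=\mathrm{Ad}(\sigma_p)$ with eigenspace splitting $\Lk=\Lh\oplus\Lm$, evaluation at $p$ gives $\Lm_p\subseteq\mathcal D_p$ and $\Lh_p\subseteq L_p$, so that $T_p(Kp)=\Lh_p\oplus\Lm_p$.

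\emph{Transvection lemma and the main obstacle.} The heart is to show $\mathcal D_p\subseteq T_p(Kp)$, equivalently $\Lm_p=\mathcal D_p$; together with non-transitivity this yields statement (a). For $v\in\mathcal D_p$ let $\gamma$ be the geodesic with $\gamma'(0)=v$ and set $\tau_s=\sigma_{\gamma(s/2)}\circ\sigma_p$. If $\gamma$ stays tangent to the $(-1)$-distribution, i.e. $\gamma'(s)\in\mathcal D_{\gamma(s)}$ for all $s$, then $\sigma_p$ and $\sigma_{\gamma(s/2)}$ both reverse $\gamma$ about their base points, so $\tau_s$ translates $\gamma$ by $s$; differentiating at $s=0$ produces a Killing field in $\Lm$ whose value at $p$ is a nonzero multiple of $v$. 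Thus everything rests on the tangency property of $\mathcal D$-geodesics, which I expect to be the main difficulty. The natural route is to prove that the line field $L$ of fixed directions is spanned by a Killing field $\xi$ (its flow consisting of isometries): then $\langle\gamma',\xi\rangle$ is constant along any geodesic, so a geodesic tangent to $\mathcal D=\xi^\perp$ at one point is tangent everywhere. Establishing that $\xi$ is Killing globally—before uniqueness of the almost symmetry is available—is the technical crux; one has to combine the pointwise reflection behaviour with completeness, exploiting that the fixed geodesics foliate $M$ by complete geodesics permuted by $K$.

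Granting the lemma, cohomogeneity one follows, and at a regular point $q$ a dimension count gives $\mathcal D_q=T_q(Kq)$ and $L_q=\nu_q(Kq)$, so the fixed geodesic of $\sigma_q$ is the normal geodesic to the orbit. For statement (b), any almost symmetry $\tau$ at a regular $q$ preserves $V=T_q(Kq)$, whence either its fixed line lies in $V$ or its reflection hyperplane equals $V$; the latter forces $\tau$ and $\sigma_q$ to share $1$-jets at $q$, so $\tau=\sigma_q$, while the former is excluded by a short argument (the fixed geodesic of an almost symmetry is pointwise fixed and meets the regular orbits transversally, so cannot be tangent to $V$). For statement (c), let $N$ be a singular $K^0$-orbit of codimension $k\ge2$. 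For a unit normal $\xi\in\nu_pN$ and small $t\neq0$ the point $\exp_p(t\xi)$ is regular, and the unique almost symmetry there fixes the normal geodesic, hence fixes $p$; its differential at $p$ is $+1$ on $\R\xi$ and $-1$ on $\xi^\perp$, so on $\nu_pN$ it acts as the line reflection $R_\xi$ and on $T_pN$ as $-\mathrm{Id}$.

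Consequently each product $R_\xi R_\eta$ is realized by an element of $K$ that preserves $N$ and is the identity on $T_pN$, hence fixes $N$ pointwise; such elements generate a connected normal subgroup $H\trianglelefteq K^0$ acting trivially on $N$ and as $SO(\nu_pN)$ on the normal space, producing the ideal $\mathfrak{so}_k\subseteq\Lk_p$. Statement (d) is then immediate from Schur's lemma: the second fundamental form of $N$ is $H$-equivariant from the trivial representation $S^2(T_pN)$ into $\nu_pN$, on which $SO(k)$ (for $k\ge2$) has no trivial summand, so it vanishes and $N$ is totally geodesic. Finally, for statement (e), on a regular orbit $Q=Kq$ the restriction $\sigma_q|_Q$ is an isometric involution with differential $-\mathrm{Id}$ on $T_qQ=\mathcal D_q$, i.e. a geodesic symmetry at $q$, while on a singular orbit $N$ the almost symmetry $\sigma_{\exp_p(t\xi)}$ restricts to an isometric involution of $N$ with differential $-\mathrm{Id}$ on $T_pN$; in both cases transitivity of $K^0$ spreads the symmetry over the orbit and completeness makes the orbit a Riemannian symmetric space.
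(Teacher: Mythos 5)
Your reduction of everything to a ``transvection lemma'' for part (a) is the weak point, and it is not merely an unfinished technicality: the hypothesis you isolate as the crux is false. You need geodesics tangent to the $(-1)$-distribution $\mathcal D$ to remain tangent to $\mathcal D$, equivalently (as you note) the fixed line field $L$ to be spanned by a Killing field. The paper's first inhomogeneous example already refutes this: for an ellipsoid of $SO(n)$-revolution which is not a sphere, at a non-equatorial regular point the unique almost symmetry is the reflection in the plane $\langle\Sigma\rangle^\perp$, so $\mathcal D_p=T_p\Sigma$ is the tangent space to the latitude sphere and $L_p$ is the meridian direction. The non-equatorial latitude spheres are the integral leaves of $\mathcal D$ and are umbilic but not totally geodesic, so a geodesic launched tangent to one immediately leaves it (Clairaut), i.e.\ $\gamma'(s)\notin\mathcal D_{\gamma(s)}$ for $s\neq0$; correspondingly every Killing field of the ellipsoid is tangent to the latitude spheres, so $L$ is not spanned by a Killing field. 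Hence $\sigma_{\gamma(s/2)}\circ\sigma_p$ is not a transvection along $\gamma$, and your route to cohomogeneity one collapses (the desired inclusion $\mathcal D_p\subseteq T_p(Kp)$ happens to be true in this example, but your mechanism cannot establish it). Since your proofs of (b)--(e) are bootstrapped from (a) (identification of the fixed geodesic with the normal geodesic, regularity of $\exp_p(t\xi)$ near a singular orbit, etc.), the gap is foundational.

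The correct argument for (a) is much softer and purely group-theoretic, and it is what the paper does: at a regular point $p$ the slice representation of $K_p$ on $\nu_p(Kp)$ is trivial; since $K$ by construction contains every almost symmetry, an almost symmetry $\sigma$ at $p$ lies in $K_p$, hence $d\sigma_p=\mathrm{Id}$ on $\nu_p(Kp)$; as the $(+1)$-eigenspace of $d\sigma_p$ is one-dimensional, this forces $\dim\nu_p(Kp)=1$, and it simultaneously pins down $d\sigma_p$, hence $\sigma$, uniquely --- giving (b) at once. With (a) and (b) in hand, your treatment of (c)--(e) is essentially the paper's (products of the reflections $\sigma_\xi\circ\sigma_\eta$ generating a dense subgroup of $SO(\nu_pN)$, total geodesy of $N$ either as the fixed-point component of $H$ or by your Schur-type argument on the second fundamental form, and restriction of almost symmetries to geodesic symmetries of the orbits). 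One further caveat: to produce, for \emph{every} unit normal $\xi$ at a singular point, an almost symmetry at $p$ fixing $\xi$, the paper does not pass through regularity of $\exp_p(t\xi)$ but through a tubular-neighborhood lemma showing $K_z\subseteq K_{\pi(z)}$ for all $z$ near $N$; this is cleaner and is what you should use to make that step airtight.
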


\begin{thm}\label{thm:inhomog-classif}
Let $M$ be an inhomogeneous simply-connected complete almost
symmetric space. Then $M$ falls into one
of the following classes:
\begin{enumerate}
\item $M$ is a multiply warped product $\R\times_f N$, where $N$ is a
  simply-connected symmetric space, and each component
  of the warping map $f(t)$, $t\in\R$, scales independently
  each irreducible factor of $N$.
\item $M$ is a Riemannian product $\R^k\times N$,  where $N$ is a
  simply-connected symmetric space, and $\R^k$ carries a 
$SO(k)$-invariant metric. 
\item $M$ is a Riemannian product $S^k\times N$,  
where $N$ is a
  simply-connected symmetric space, and $S^k$ carries 
a $SO(r)\times SO(k-r)$-invariant metric. 
\end{enumerate}
\end{thm}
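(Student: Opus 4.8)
The plan is to exploit the cohomogeneity one structure provided by Theorem~\ref{thm:inhomog} and to analyze how the metric evolves along a geodesic normal to the orbits.

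First I would record the global shape of the orbit space. By Theorem~\ref{thm:inhomog}(a) the group $K$ acts on $M$ with cohomogeneity one, so $M/K$ is a connected one-dimensional manifold, possibly with boundary, the boundary points corresponding to singular orbits. Since $M$ is complete this leaves the four models $S^1$, $\R$, $[0,\infty)$ and $[0,L]$; the circle is excluded because a cohomogeneity one fibration over $S^1$ with connected principal orbit would force a surjection of $\pi_1(M)$ onto $\Z$, contradicting simple connectivity. Fixing a unit speed geodesic $\gamma$ meeting every orbit orthogonally, the regular part becomes diffeomorphic to $I\times N$, where $N$ is a principal orbit and $I$ is the interior of $M/K$, and the metric takes the form $dt^2+g_t$ for a smooth family of $K$-invariant metrics $g_t$ on $N$.

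Next I would pin down the almost symmetry along $\gamma$. On the open dense set $\Omega$ of regular points the almost symmetry $\sigma_p$ is unique by Theorem~\ref{thm:inhomog}(b), hence $K$-equivariant, and its one-dimensional fixed space $\ell_p$ defines a $K$-invariant line field. Because $\sigma_p$ lies in $K$ it preserves each orbit and so acts trivially on $M/K$; were $\ell_p$ tangent to the orbit, $\sigma_p$ would reverse $\gamma$ and send $\gamma(t)$ to the orbit at parameter $-t$, which is impossible. Thus $\ell_p=\R\,\gamma'$ is the radial line and $d\sigma_p=-\mathrm{Id}$ on the tangent space of the orbit, so $\sigma_p$ is the geodesic symmetry of the orbit through $p$ and fixes $\gamma$ pointwise. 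Consequently a single isometry $\sigma:=\sigma_{\gamma(0)}$ restricts to the symmetry of every orbit $N_t$ about $\gamma(t)$, and since $\sigma$ commutes with the $K$-equivariant radial flow identifying the orbits, the symmetric-space structure carried by the orbits (Theorem~\ref{thm:inhomog}(e)) is independent of $t$. With the Levi-Civita connection of a symmetric space fixed, the compatible invariant metrics are exactly the parallel symmetric two-tensors; by de Rham/Schur rigidity these scale each irreducible factor by its own function and leave a flat factor free. Hence on the regular part $dt^2+g_t$ is a multiply warped product scaling the irreducible factors of $N$ independently, which already yields conclusion (a) when $M/K=\R$.

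Finally I would incorporate the singular orbits. At a singular orbit $N_0$, of codimension $k\geq2$, Theorem~\ref{thm:inhomog}(c) gives a connected normal subgroup $H$ of $K^0$ acting trivially on $N_0$ and as the full $SO(k)$ on the normal space; in particular the collapse is a round sphere $S^{k-1}$, so one warping factor is a spherical factor degenerating smoothly as $\gamma$ reaches $N_0$. The $H$-action, being $\mathrm{Id}\times SO(k)$, trivializes the normal bundle and, together with the total geodesy of $N_0$ from Theorem~\ref{thm:inhomog}(d), forces the directions transverse to the collapsing sphere to split off as a parallel, $t$-independent Riemannian factor $N$, while the radial and spherical directions assemble into a rotationally invariant metric. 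With one singular orbit the sphere fills in to a disk and its complement extends to all of $\R^k$, giving the Riemannian product $\R^k\times N$ of conclusion (b); with two singular orbits the two disks glue to a sphere and one obtains $S^k\times N$ with an $SO(r)\times SO(k-r)$-invariant metric, conclusion (c). The main obstacle is precisely the rigidity of the warping: proving that the symmetric structure of the orbits is genuinely $t$-independent, so that the deformation reduces to independent scalings of the irreducible factors, and, in the presence of a singular orbit, upgrading the local slice description near $N_0$ to a global Riemannian product, i.e.\ showing that the transverse directions form a parallel distribution integrating to a constant symmetric factor. Extracting this genuine product splitting from the normal subgroup $H$ and the total geodesy of the singular orbits is the technical heart of the argument.
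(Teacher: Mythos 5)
Your overall strategy coincides with the paper's: invoke Theorem~\ref{thm:inhomog} to get cohomogeneity one with symmetric principal orbits, use Mostert's classification of the orbit space into $(0,1)$, $S^1$, $[0,1)$, $[0,1]$, discard $S^1$ by simple connectivity, and obtain a multiply warped product over a symmetric space on the regular part. The gap is exactly where you say it is, and it is a genuine one: you never actually produce the product splitting near a singular orbit, and the mechanism you propose for it (a parallel transverse distribution integrating to a $t$-independent metric factor, extracted from total geodesy of the singular orbit) is not how the argument closes and is arguably aiming at too strong a statement. Total geodesy of the singular orbit only forces the first derivatives of the scaling functions to vanish there; it does not make the transverse directions parallel, and indeed the paper's own proof of case~(d) explicitly allows the irreducible factors of $L/H$ to be rescaled as functions of the arc-length parameter. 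So the "technical heart" as you have framed it would not go through as a metric rigidity statement.

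What the paper does instead is purely group-theoretic and topological. Theorem~\ref{thm:inhomog}(c) gives, at a singular orbit of codimension $k\geq2$, a connected \emph{normal} subgroup isomorphic to $SO(k)$ acting trivially on the singular orbit and as the full $SO(\nu_pN)$ on the slice; this splits the group as $K=L\times K_1$ with $K_1\cong SO(k)$ and splits the isotropy groups compatibly ($K_q=L_q\times K_1$ at singular points, $L_p\times SO(k-1)$ at regular ones). Lemma~\ref{10} then shows the tubular neighborhood $K\times_{K_q}\R^k$ is $K$-equivariantly a product $N\times\R^k$ because one factor of the structure group acts trivially on the slice, and Lemma~\ref{20} (a cohomogeneity one manifold is determined by its isotropy groups) upgrades this local statement to a global $K$-equivariant diffeomorphism of $M$ with the model $\R^k\times L/H$ (one singular orbit) or $S^k\times L/H$ (two singular orbits, in either the $K_1\times K_2\times L$ or the $K_1\times L$ subcase). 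The metric conclusion is then read off as the classification of $K$-invariant metrics on the model, not derived from a parallel distribution. To complete your proof you would need to supply these two ingredients (or equivalents): the equivariant triviality of the disk bundle over the singular orbit coming from the normal $SO(k)$ factor, and the uniqueness of cohomogeneity one manifolds with prescribed group diagram.
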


It is easy to see that the manifolds in Theorem~\ref{thm:inhomog-classif}
can be embedded as into Euclidean space as almost symmetric
submanifolds 
if and only if each irreducible factor
of $N$ is an extrinsic symmetric space.

It is also
clear that the Riemannian universal covering of an almost symmetric space
is as well
almost symmetric, but in principle it could be homogeneous even if the
base manifold is of cohomogeneity one. So the classification of
non-simply connected inhomogeneous complete almost symmetric spaces
is a delicate problem in the intrinsic case. On the other hand,
in the extrinsic case, it is not difficult to obtain a complete
classification of inhomogeneous complete almost symmetric submanifolds
of Euclidean space, but we leave this result to a forthcoming paper so as
not to make this one much longer.

\section{Preliminaries}\label{prelim}

Throughout this article, a homogeneous submanifold stands for an extrinsically homogeneous submanifold.

\subsection{Canonical connections on the normal bundle}\label{P1} 
Let $K$ be a  not necessarily connected closed subgroup of $O(n)$, 
and let $M=K^0\cdot p$, where  $0\neq p\in \mathbb R ^n$, and $(\,)^0$ denotes the identity component. We assume that  $KM=M$. Otherwise, we replace $K$ by the group $\{k\in K: kM=M\}$ which has the same identity component $K^0$. Assume that $M$ is full and irreducible as a submanifold of $\mathbb R^n$;
this is always the case if $K^0$ acts irreducibly (but there exist full
and irreducible orbits of reducible representations).
Further assume that $M$ is not the sphere of radius $\Vert p\Vert$.
Let us consider  an $\mathrm{Ad}(K_v)$-invariant positive definite inner product $b$ on 
${\mathfrak {k}} : = \mathrm{Lie}(K)$, and let 
${\mathfrak{m}}:= {\mathfrak k}^\perp$. Then 
${\mathfrak{m}}$ induces a canonical connection on the principal fiber bundle 
$$0\to  K _p \to  K \to  K/K_p=M\to 0.$$
Observe that neither $K _p$ nor $K$ are  in general connected, but $M$ is so.
Consider the slice representation $\rho$ of $K_p$ on the normal space $\nu _pM$,
and the associated metric vector  bundle $ K\underset{\rho}{\times}\nu _pM$ with the induced canonical connection $\tilde \nabla ^\perp$, which is $K$-invariant. 

We identify $\nu M \, \simeq \, K\times_\rho\nu _vM$ in the natural way: $[g,\xi]\simeq \mathrm dg_p (\xi)$, where $g\in  K$, $\xi \in \nu _pM$. 
Such a canonical normal connection $\tilde \nabla ^\perp$ has the property that given a piecewise differentiable  curve $c: [0,1]\to M$, there exists an element of $g \in  K$, non-unique if $\ker \rho$ is non-trivial,
with $g(c(0)) = c(1)$ and such that $\mathrm dg_{c(0)}$ coincides with 
the $\tilde \nabla ^\perp$-parallel transport $\tilde \tau ^\perp _c$ along $c$. In fact, if $\tilde c$ is a  horizontal lift of $c$ to $K^0$, then 

$$\tilde \tau ^\perp _c: \nu _{c(0)}M \simeq  
[\tilde c(0), \nu _vM] \to \nu _{c(1)}M \simeq  
[\tilde c(1), \nu _vM]$$ is given by 
$$\tilde \tau ^\perp _c([\tilde c (0),\xi])
= [\tilde c (1),\xi] = \tilde c (1)\tilde c (0)^{-1}
[\tilde c (0),\xi] = g[\tilde c (0),\xi].$$
Observe that, in our construction,   $g = \tilde c (1)\tilde c (0)^{-1}$ belongs to $K ^0$.

\begin{rmk}\label{rem:Simons} Assume that $K$ acts on $\mathbb R^n$ as an irreducible $s$-representation, and that the orbit $M = K\cdot p$, $p\neq 0$, is not a sphere. Then  $K = \bar K : = \{g\in SO(n): gM = M \}^0$. In fact, let $R$ the Riemannian curvature tensor associated with the $s$-representation. Since $K\subset \bar K$, then 
  $[\mathbb R^n , R, K]$ and $[\mathbb R^n , R, \hat K]$ are irreducible and non-transitive holonomy systems. Then, by Simons' holonomy theorem \cite{Si},
  $K=\bar K$.
\end{rmk}

We denote by  $\nabla^\perp$ the normal connection on $\nu M$ induced by the 
connection of the ambient Euclidean space, by 
$D^ \perp$ the mixed $K$-invariant tensor 
$$D^\perp:= \nabla ^\perp - \tilde \nabla ^\perp,$$ 
 and by $\mathcal D^K$ the $K$-invariant distribution on $M$ defined by 
 $$\mathcal D_q^K = \{z\in T_qM: D^\perp _z=0\}$$
for $q\in M$.  
 The distribution $\mathcal D^K$ depends on $K$ and on~$b$. In fact, if we replace $K$ by $ K'$, where $K\cdot v = K'\cdot v $, then 
 $\mathcal D^K$ and $\mathcal D^{K'}$ are not in general equal. For instance, the isotropy representation of the oriented Grassmannian 
 $SO(2n)/SO(n)\times SO(n)$ has an extrinsic symmetric orbit $M=SO(n)\times SO(n) \cdot v = SO(n)\times SO(n)/\mathrm{diag}(SO(n))$. In this case the associated canonical normal connection   coincides with $\nabla ^\perp $, and thus  
 $\mathcal D^{SO(n)\times SO(n)} = TM$. But  $M = SO(n) \cdot v$ is also 
a principal orbit of $SO(n)$. In this case the  canonical normal connection associated to $SO(n)$ is flat and
hence different from $\nabla ^\perp$. Thus, 
$\mathcal D^{SO(n)}\neq TM$. 

Let $\nabla ^c$ be the canonical tangent connection on $M$ induced by the canonical connection $\mathcal H$ on the associate bundle $K\times_\chi{T_vM}\simeq TM$, where 
$\chi$ is the isotropy representation of $K _v$. The connection $\nabla ^c\oplus \tilde \nabla ^\perp$, on the bundle 
$TM\oplus \nu M$, has the property that any $K^0$-invariant
 mixed tensor, in particular the second fundamental form $\alpha$, is $\nabla ^c\oplus \tilde \nabla ^\perp$-parallel. Then the  main theorem in \cite{OS}, and Remark \ref{rem:Simons} imply the  following result:

\begin{thm}\label{thm:OS} Let $M=K^0\cdot p$ be an irreducible and full homogeneous submanifold of $\mathbb R^N$, where $K$ is a compact subgroup of $O(N)$, with $KM=M$.  Assume, keeping the  notation of this section,  that  $\mathcal D^K= TM$ (equivalently, $\tilde \nabla ^\perp = \nabla ^\perp $). Then there exists  a compact  subgroup $\hat K \subset SO(N)$, acting as an irreducible $s$-representation, such that    $M=K^0\cdot p= \hat K \cdot p$. Moreover,  $K^0\subset \hat K$ and $\mathcal D^{\hat K}= TM$.
\end{thm}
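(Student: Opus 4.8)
The plan is to combine the parallelism of the second fundamental form established just above with the hypothesis $\mathcal D^K=TM$, feed this into the characterization theorem of Olmos--Sánchez \cite{OS} to recognize $M$ as an orbit of an $s$-representation, and finally use Remark~\ref{rem:Simons} (Simons' holonomy theorem) to single out a canonical acting group $\hat K$ and verify its properties.

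First I would unwind the hypothesis. The condition $\mathcal D^K=TM$ says that $D^\perp=\nabla^\perp-\tilde\nabla^\perp$ vanishes identically, i.e.\ the canonical normal connection $\tilde\nabla^\perp$ coincides with the Levi-Civita normal connection $\nabla^\perp$. Since $M$ is $K^0$-homogeneous, the tensor $\alpha$ is $K^0$-invariant and therefore $\nabla^c\oplus\tilde\nabla^\perp$-parallel, by the discussion preceding the statement; with $\tilde\nabla^\perp=\nabla^\perp$ this upgrades to $\alpha$ being parallel for $\nabla^c\oplus\nabla^\perp$. As $M$ is full, irreducible, compact homogeneous, and (by the standing assumption of this section) not a sphere, the main theorem of \cite{OS} then applies and yields that $M$ is an orbit of an $s$-representation. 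I expect this to be the step carrying the real geometric content, but it is entirely packaged inside the cited theorem; on our side the care is simply to verify that the normal-connection equality $\tilde\nabla^\perp=\nabla^\perp$, and not merely the automatic homogeneous parallelism, is what that theorem consumes.

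Next I would identify the group. Fullness together with irreducibility forces the $s$-representation to be irreducible: a reducible one would exhibit $M$ either as an extrinsic Riemannian product or as a non-full orbit in a proper invariant subspace, contrary to hypothesis. Setting $\hat K:=\{g\in SO(N):gM=M\}^0$, which is closed in the compact group $SO(N)$ and hence compact, Remark~\ref{rem:Simons} applied to the $s$-representation group $G$ through which $M$ is an orbit shows $G=\{g\in SO(N):gM=M\}^0=\hat K$; thus $\hat K$ acts as an irreducible $s$-representation and $M=\hat K\cdot p$. Since $K^0$ is connected, lies in $SO(N)$, and satisfies $K^0M=M$, it is contained in $\{g\in SO(N):gM=M\}^0=\hat K$, giving $K^0\subset\hat K$.

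Finally, I would obtain $\mathcal D^{\hat K}=TM$ by reading the Olmos--Sánchez characterization in the converse direction: for the orbit $M=\hat K\cdot p$ of the irreducible $s$-representation $\hat K$, equipped with the canonical connection coming from the (negative) Killing form of the underlying symmetric pair, the canonical normal connection coincides with $\nabla^\perp$, i.e.\ $\tilde\nabla^\perp_{\hat K}=\nabla^\perp$ and so $\mathcal D^{\hat K}=TM$. The main obstacle is thus not a single hard computation but the correct assembly of two black boxes: translating the distribution hypothesis into the precise input of \cite{OS}, and performing the group-theoretic bookkeeping that turns ``an $s$-representation'' into the canonical group $\hat K$ and certifies the two inclusions and the distribution equality.
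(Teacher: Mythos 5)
Your proposal matches the paper's own argument, which is given only in compressed form ("the main theorem in [OS] and Remark~\ref{rem:Simons} imply the following result"): you correctly translate $\mathcal D^K=TM$ into $\tilde\nabla^\perp=\nabla^\perp$, feed the resulting $\nabla^c\oplus\nabla^\perp$-parallelism of the second fundamental form into the Olmos--Sánchez characterization, and then use Simons' holonomy theorem via Remark~\ref{rem:Simons} to identify the canonical group $\hat K$ and obtain the inclusions. This is the same route as the paper, just with the bookkeeping spelled out.
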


\subsection{Rank rigidity and holonomy tubes}
The \textit{rank} of a homogeneous submanifold $M$ of Euclidean space is
the maximal number of linearly independent parallel normal vector fields.
Equivalently, the rank is the dimension of the flat factor of the
restricted normal holonomy group. If $M$ is contained in a sphere, then
the position vector provides a umbilical parallel normal vector field. 

The following result can be found in \cite[Corollary 5.1.8]{BCO}:

\begin{thm}[\cite{O1,O2}] \label{thm:rank2} Let $M^n$, $n\geq 2$, be a homogeneous full irreducible submanifold of $\mathbb R^ N$ with 
$\mathrm{rank}(M)\geq 1$. Then $M$ is contained
in a sphere. In addition, if $\mathrm{rank}(M)\geq 2$, then $M$ is an orbit of an $s$-representation.
\end{thm}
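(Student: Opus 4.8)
The plan is to establish the two assertions separately, deducing the second from the first. Throughout, write $\alpha$ for the second fundamental form, $A_\xi$ for the shape operator in a normal direction $\xi$, and let $V_p\subseteq\nu_pM$ denote the space of values at $p$ of the parallel normal fields, so that $\dim V_p=\mathrm{rank}(M)$.

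First I would record two consequences of the hypotheses valid for any parallel normal field $\xi$. By homogeneity the eigenvalues of $A_\xi$ are constant on $M$; and since $\nabla^\perp\xi=0$ the normal curvature satisfies $R^\perp(\cdot,\cdot)\xi=0$, so the Ricci equation gives $[A_\xi,A_\eta]=0$ for \emph{every} normal direction $\eta$. Hence the shape operators of parallel normal fields commute with all shape operators and are simultaneously diagonalizable with constant eigenvalues; let $E_1,\dots,E_g$ be the common eigendistributions. For $X\in E_i$, $Y\in E_j$ with $i\neq j$ and any $\eta$ one gets $\langle\alpha(X,Y),\eta\rangle=\langle A_\eta X,Y\rangle=0$ (as $A_\eta$ preserves the $E_k$, which are mutually orthogonal), so $\alpha(E_i,E_j)=0$. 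Moreover, the Codazzi equation applied to the Codazzi tensor $A_\xi$ with constant eigenvalues forces each $E_i$ to be integrable with umbilical leaves and produces the \emph{curvature normals}: vectors $n_1,\dots,n_g\in V_p$ with $A_\zeta|_{E_i}=\langle\zeta,n_i\rangle\,\mathrm{Id}$ for all $\zeta\in V_p$. This is exactly the local structure attached to a parallel flat normal subbundle.

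For the first assertion, the key observation is that $M$ lies in a sphere if and only if it carries a parallel \emph{umbilical} normal field with nonzero factor, i.e.\ some $\zeta\in V_p$ and $c\neq0$ with $\langle\zeta,n_i\rangle=c$ for all $i$; equivalently, the curvature normals $n_1,\dots,n_g$ lie on an affine hyperplane of $V_p$ not through the origin. I would therefore reduce the first assertion to proving this hyperplane condition. The main obstacle sits precisely here: for a general Codazzi structure the curvature normals need not be so positioned (a flat cylinder is the prototype of what must be excluded), and ruling this out is where irreducibility and, crucially, homogeneity enter. The strategy I expect to work (this is the heart of \cite{O1}) is to show that if the curvature normals failed to lie off a hyperplane then one could split off a factor—a Euclidean de Rham/extrinsic factor coming from a vanishing curvature normal, contradicting irreducibility and fullness, or a lower-dimensional homogeneous factor—while controlling the leaves of the $E_i$ by transplanting them with the transitive group. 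Once the hyperplane condition holds, the dual field $\zeta$ is parallel and umbilical and $p\mapsto p+c^{-1}\zeta(p)$ is constant, exhibiting $M$ inside a sphere.

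For the second assertion assume $\mathrm{rank}(M)\geq2$, so $\dim V_p\geq2$. By the first assertion $M$ lies in a sphere, so the radial field is one parallel umbilical direction in $V_p$; choosing a second parallel field independent of it yields a $\zeta\in V_p$ whose shape operator $A_\zeta$ is not a multiple of the identity, so the curvature normals $n_i$ genuinely span a nondegenerate affine configuration inside a hyperplane of $V_p$. I would then upgrade this along the lines already set up in the paper: argue that the higher-rank commuting structure forces the canonical normal connection to agree with the Levi-Civita normal connection on all of $TM$ (i.e.\ $\mathcal D^K=TM$), and invoke Theorem~\ref{thm:OS} to identify $M$ with an orbit of an irreducible $s$-representation, the holonomy-system input being supplied by Simons' theorem as in Remark~\ref{rem:Simons}. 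The main difficulty here is verifying the hypothesis $\tilde\nabla^\perp=\nabla^\perp$ of Theorem~\ref{thm:OS} from $\mathrm{rank}(M)\geq2$ alone—equivalently, showing that two independent parallel normal fields force the full second fundamental form to be adapted to the symmetric structure—and this is the substance of \cite{O2}.
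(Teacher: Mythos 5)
The first thing to note is that the paper offers no proof of this statement: it is imported verbatim from Olmos's rank rigidity theorems \cite{O1,O2} (via \cite[Corollary 5.1.8]{BCO}), so there is no internal argument to compare yours against. Judged on its own terms, your write-up correctly assembles the standard preliminary structure --- parallelism of $\xi$ plus the Ricci equation gives $[A_\xi,A_\eta]=0$ for every normal direction $\eta$, homogeneity gives constant eigenvalues, and one obtains the common eigendistributions and curvature normals $n_1,\dots,n_g\in V_p$ with $A_\zeta|_{E_i}=\langle\zeta,n_i\rangle\,\mathrm{Id}$ --- and correctly reformulates ``$M$ lies in a sphere'' as the existence of $\zeta\in V_p$ and $c\neq0$ with $\langle\zeta,n_i\rangle=c$ for all $i$.

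However, both substantive assertions are then deferred rather than proved. For $\mathrm{rank}(M)\geq1$ you assert that failure of the affine-hyperplane condition should force a splitting contradicting irreducibility and fullness, and you say this ``is the heart of \cite{O1}''; no argument is given for why a degenerate configuration of curvature normals produces an extrinsic factor of a homogeneous irreducible submanifold, and that implication is exactly the nontrivial content of the first claim. For $\mathrm{rank}(M)\geq2$ you propose to verify $\mathcal D^K=TM$ and invoke Theorem~\ref{thm:OS}, but you explicitly leave open the verification of $\tilde\nabla^\perp=\nabla^\perp$ from the rank hypothesis alone, which is again the entire theorem; moreover it is not clear this is the right intermediate statement --- the argument of \cite{O2} proceeds through parallel and focal manifolds and a holonomy-system argument (Simons' theorem), not through equality of the two normal connections on all of $TM$. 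So your proposal is a correct road map with the two essential steps missing; since the paper itself only cites the result, this is consistent with how the paper uses it, but it does not constitute a proof.
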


Next recall the following theorem (it holds without the
compactness assumption).

\begin{thm}[\cite{O2}, Theorem~C] \label{thm:O1C}
  Let $M^ n=K\cdot p$, $n\geq 2$,  be a full irreducible
  compact homogeneous submanifold of $\mathbb R^N$ and let $k\in K$, $q\in M$. Then there exists a piecewise differentiable curve 
  $c:[0,1]\to M$ 
  with $c(0)=p$, $c(1)=kp$ and
  such that $\mathrm dk\vert_{\nu _pM} = \tau _c^{\perp}$,
    where $\tau ^\perp$ denotes the $\nabla ^{\perp}$-parallel transport.
\end{thm}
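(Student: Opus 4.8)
The plan is to reformulate the statement as the assertion that the $K$-action on the normal bundle is absorbed by normal parallel transport. The one structural fact I would use throughout is that $\nabla^\perp$ is $K$-invariant: each $k\in K$ is an ambient isometry preserving $M$, so $dk$ carries $\nu_aM$ isometrically onto $\nu_{ka}M$ and intertwines transport, i.e. $dk_b\circ\tau^\perp_\gamma=\tau^\perp_{k\gamma}\circ dk_a$ for every curve $\gamma$ from $a$ to $b$, where $k\gamma$ is $t\mapsto k(\gamma(t))$. Setting
\[
N=\{\,k\in K:\ \text{there is a curve }c\text{ from }p\text{ to }kp\text{ with }\tau^\perp_c=dk_p|_{\nu_pM}\,\},
\]
the theorem becomes the claim $N=K$.

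First I would verify that $N$ is a subgroup of $K$. The identity lies in $N$ via the constant curve; for $k_1,k_2\in N$ with realizing curves $c_1,c_2$, the intertwining relation applied to $k_1$ and $c_2$ gives $d(k_1k_2)_p=\tau^\perp_{c_1*(k_1c_2)}$, and the reversed curve $k^{-1}\bar c$ handles inverses. Next, fixing $k\in K$ and any curve $c_0$ from $p$ to $kp$, the set of all transports $\tau^\perp_c$ over curves $c$ from $p$ to $kp$ is the left coset $\tau^\perp_{c_0}\,\Phi^\perp_p$ of the full normal holonomy group $\Phi^\perp_p\subset O(\nu_pM)$; hence $k\in N$ iff $A_k:=(\tau^\perp_{c_0})^{-1}dk_p\in\Phi^\perp_p$, a condition independent of $c_0$. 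Let $\Psi_p$ be the subgroup of $O(\nu_pM)$ generated by $\Phi^\perp_p$ together with all the $A_k$. The invariance relation, applied to a holonomy loop, shows $\Phi^\perp_p$ is normal in $\Psi_p$, and a direct computation gives $A_{k_1k_2}=A_{k_1}A_{k_2}$ for suitably concatenated curves; thus $k\mapsto A_k\Phi^\perp_p$ is a surjective homomorphism $K\to\Psi_p/\Phi^\perp_p$ with kernel $N$, so $K/N\cong\Psi_p/\Phi^\perp_p$ and the theorem is equivalent to $\Psi_p=\Phi^\perp_p$.

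The heart of the matter is to prove $\Psi_p\subseteq\Phi^\perp_p$, which I would split into a Lie-algebra part and a component part. Differentiating $A_{\exp(tX)}$ along the orbit curve $t\mapsto\exp(tX)p$ for $X\in\mathfrak{k}$, the infinitesimal discrepancy between the group transport $d\exp(tX)_p$ and $\nabla^\perp$-transport is governed precisely by the homogeneous structure tensor $D^\perp=\nabla^\perp-\tilde\nabla^\perp$ evaluated on the Killing field $X^*_p$, since $\tilde\nabla^\perp$ is by construction the connection whose transport is realized by $K^0$. I would therefore aim to show that every $D^\perp_z$, $z\in T_pM$, lies in the normal holonomy algebra $\mathfrak{hol}_p=\mathrm{Lie}(\Phi^\perp_p)$, by combining the Ricci equation (which expresses the normal curvature through the $K$-invariant second fundamental form $\alpha$) with the Ambrose--Singer description of $\mathfrak{hol}_p$ as the span of parallel-transported curvature operators. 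This would give $\mathrm{Lie}(\Psi_p)=\mathfrak{hol}_p$, so $\Psi_p/\Phi^\perp_p$ is discrete; being a continuous image of the connected group $K^0$, it is then trivial on $K^0$, whence $K^0\subseteq N$.

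The hard part will be exactly this containment $D^\perp_z\in\mathfrak{hol}_p$ — that the homogeneous structure is holonomy-generated — together with the residual finite quotient $\Psi_p/\Phi^\perp_p$ arising from the components of $K$. For the latter I would invoke the Normal Holonomy Theorem, which presents the action of $\Phi^\perp_p$ on the non-flat part of $\nu_pM$ as an $s$-representation, and use compactness of $K$ together with the hypotheses that $M$ is full, irreducible and not a round sphere to exclude the degenerate flat and transitive cases; this is precisely the regime of the structure theory of \cite{O1,O2}. Once $\Psi_p=\Phi^\perp_p$ is established, the reduction of the first two paragraphs produces, for every $k\in K$, a curve $c$ from $p$ to $kp$ with $\tau^\perp_c=dk_p|_{\nu_pM}$, which is the assertion.
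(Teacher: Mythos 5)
The paper does not actually prove this statement: it is quoted verbatim as Theorem~C of~\cite{O2} and used as a black box, so there is no internal proof to compare against. Judged on its own, your proposal gets the correct \emph{reformulation} --- the theorem is equivalent to saying that for every $k\in K$ the discrepancy $A_k=(\tau^\perp_{c_0})^{-1}\,dk_p|_{\nu_pM}$ lies in the full normal holonomy group $\Phi_p$, and your verifications that $N$ is a subgroup, that $\Phi_p$ is normal in $\Psi_p$, and that $k\mapsto A_k\Phi_p$ is a homomorphism with kernel $N$ are all sound. But the two steps that carry the entire mathematical content are only named, not proved, so this is a strategy outline rather than a proof.

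Concretely: (i) In the ``Lie-algebra part'' you reduce to showing $D^\perp_z\in\mathfrak{hol}_p$, but the generators of $\mathrm{Lie}(\Psi_p)$ coming from $X\in\Lk_p$ are not of this form --- for such $X$ the orbit curve is constant and $A_{\exp tX}$ is just the slice representation, so you also need $d\rho(\Lk_p)\subset\mathfrak{hol}_p$. That inclusion is exactly the corollary this paper \emph{extracts from} Theorem~\ref{thm:O1C} (``$\rho(K_p)\subset\Phi(p)$''), so assuming it here is close to circular; and neither it nor $D^\perp_z\in\mathfrak{hol}_p$ is actually established --- the Ricci equation plus Ambrose--Singer gives you the holonomy algebra as a span of transported curvature operators, but no argument is offered for why the $K$-invariant tensor $D^\perp$ lands in that span (this is the substance of the homogeneous-structure results of \cite{O1,O2}, not a formal consequence). (ii) For the residual finite quotient, knowing that $A_k$ normalizes $\Phi_p$ and preserves its orbits does not put $A_k$ inside $\Phi_p$ (consider $-\mathrm{id}$, which normalizes and preserves the orbits of many non-transitive groups without belonging to them), so invoking the Normal Holonomy Theorem to ``exclude degenerate cases'' does not close the argument for the components of $K$ and of $K_p$. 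Until (i) and (ii) are supplied, the proposal proves only the easy equivalence $K=N\iff\Psi_p=\Phi_p$, not the theorem itself; the actual proof is in \cite{O2}.
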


We now turn to  the definition of a holonomy tube
(see \cite[sec.~3.4.3]{BCO}). If $N$ is an Euclidean submanifold, $p\in N$ and
and $\eta_p\in \nu _pM$, the holonomy tube $(N)_{\eta _p}$ at $p$ is defined by 
$$(N)_{\eta _p}: = \{\gamma (1) + \tau _\gamma ^\perp(\eta _p)\},$$
where $\gamma:[0,1]\to N$ runs through all
piecewise differentiable curves with $\gamma (0)=p$.
If $\Vert\eta _p\Vert$ is less than the focal distance, then $(N)_{\eta _p}$ is a submanifold and $\dim (N)_{\eta _p} = \dim N 
+ \dim (\Phi (p)\cdot \eta _p)$. If the connected  component $\Phi^* (p)$ of $\Phi (p)$, the so-called restricted normal holonomy group, does not fix $\eta _p$, then  $N = ((N)_{\eta _p})_{\bar \eta} $ is a  parallel focal manifold of $(N)_{\eta _p}$, where $\bar \eta$ is the non-umbilical parallel normal vector
field of $(N)_{\eta _p}$ with $\bar \eta (p+\eta _p) = -\eta _p$.

Observe that Theorem \ref{thm:O1C} implies that  $\rho (K_p)\subset \Phi (p)$, where $\rho$
is the slice representation and $\Phi (p)$ is the normal holonomy group of $M$ at $p$. Therefore:

\begin{cor}\label{cor:O1C}
  Let $M^n=K\cdot p$, $n\geq 2$,  be a full irreducible compact homogeneous
  submanifold of $\mathbb R^N$ and let $\eta _p\in\nu _pM$. Then  $K\cdot(M)_{\eta _p}= (M)_{\eta _p}$.
\end{cor}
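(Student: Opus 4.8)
The plan is to show that the group $K$ preserves the holonomy tube $(M)_{\eta_p}$ by first understanding how an element $k\in K$ acts on a typical point of the tube. Recall that a point of $(M)_{\eta_p}$ has the form $\gamma(1)+\tau^\perp_\gamma(\eta_p)$, where $\gamma$ is a piecewise differentiable curve in $M$ starting at $p$. So I would fix $k\in K$ and compute $k\bigl(\gamma(1)+\tau^\perp_\gamma(\eta_p)\bigr) = k\gamma(1)+\mathrm dk\,\tau^\perp_\gamma(\eta_p)$, using that $k$ acts as a linear isometry of $\mathbb R^N$. The goal is to rewrite this as another point of the tube, i.e.\ as $\delta(1)+\tau^\perp_\delta(\eta_p)$ for some curve $\delta$ starting at $p$.

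First I would produce a curve connecting $p$ to $kp$ realizing the normal parallel transport equal to $\mathrm dk|_{\nu_pM}$. This is exactly the content of Theorem~\ref{thm:O1C}: there exists a piecewise differentiable curve $c:[0,1]\to M$ with $c(0)=p$, $c(1)=kp$, and $\mathrm dk|_{\nu_pM}=\tau^\perp_c$. Next I would form the concatenation $\delta := (k\circ\gamma)\ast c$, that is, first traverse $c$ from $p$ to $kp$ and then follow the image curve $k\circ\gamma$ from $kp=k\gamma(0)$ to $k\gamma(1)$. This $\delta$ is a piecewise differentiable curve in $M$ (note $k\circ\gamma$ lies in $M$ because $kM=M$) starting at $p$ and ending at $k\gamma(1)=\delta(1)$.

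The key computation is then to identify $\tau^\perp_\delta(\eta_p)$ with $\mathrm dk\,\tau^\perp_\gamma(\eta_p)$. Using that normal parallel transport is functorial under the ambient isometry $k$ — so that $\tau^\perp_{k\circ\gamma}=\mathrm dk\circ\tau^\perp_\gamma\circ\mathrm dk^{-1}$ as maps between the corresponding normal spaces — together with the multiplicativity of parallel transport under concatenation, $\tau^\perp_\delta=\tau^\perp_{k\circ\gamma}\circ\tau^\perp_c$, I expect to get
\[
\tau^\perp_\delta(\eta_p)=\tau^\perp_{k\circ\gamma}\bigl(\tau^\perp_c(\eta_p)\bigr)
=\mathrm dk\,\tau^\perp_\gamma\bigl(\mathrm dk^{-1}\,\mathrm dk\,(\eta_p)\bigr)
=\mathrm dk\,\tau^\perp_\gamma(\eta_p),
\]
where I have used $\tau^\perp_c(\eta_p)=\mathrm dk(\eta_p)$ from the choice of $c$. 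Combining this with $\delta(1)=k\gamma(1)$ yields $\delta(1)+\tau^\perp_\delta(\eta_p)=k\gamma(1)+\mathrm dk\,\tau^\perp_\gamma(\eta_p)=k\bigl(\gamma(1)+\tau^\perp_\gamma(\eta_p)\bigr)$, which is the desired point of the tube. Since $\gamma$ and $k$ were arbitrary, this shows $k\cdot(M)_{\eta_p}\subseteq(M)_{\eta_p}$, and applying the same argument to $k^{-1}$ gives equality.

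The main obstacle I anticipate is bookkeeping the naturality relation $\tau^\perp_{k\circ\gamma}=\mathrm dk\circ\tau^\perp_\gamma\circ\mathrm dk^{-1}$ carefully, making sure the base points match: $\mathrm dk$ must be read as the isometry sending $\nu_{\gamma(1)}M$ to $\nu_{k\gamma(1)}M$ and $\nu_p M$ to $\nu_{kp}M$, so that the composition in the concatenation is well-defined. This is a routine but attention-demanding verification that the normal connection is preserved by the ambient isometry $k$ (which it is, since $k\in O(N)$ preserves both $M$ and the Euclidean metric, hence the induced normal connection). Once the base points are tracked correctly, the corollary follows immediately from Theorem~\ref{thm:O1C}.
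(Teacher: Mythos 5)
Your argument is correct and is exactly the intended deduction: the paper derives the corollary directly from Theorem~\ref{thm:O1C} (it offers no further proof beyond ``Therefore''), and your concatenation $\delta=(k\circ\gamma)\ast c$ together with the naturality $\tau^\perp_{k\circ\gamma}=\mathrm dk\circ\tau^\perp_\gamma\circ \mathrm dk^{-1}$ is the standard way to fill in the details. The bookkeeping of base points you flag is handled correctly, so nothing is missing.
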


\begin{cor}\label{cor:holhom} Let $M^n =K\cdot p$, $n\geq 2$,  be a
  full irreducible compact homogeneous submanifold of $\mathbb R^N$ and let
  $\eta _p\in\nu _pM$ be sufficiently small so that the parallel
  submanifold $(M)_{\eta _p}$ is an embedded submanifold. Assume that $K$ acts transitively on $(M)_{\eta _p}$ and that $\eta _p$ is not fixed by the restricted normal holonomy group $\Phi^*(p)$. Then $M$ is an orbit of an $s$-representation.  
\end{cor}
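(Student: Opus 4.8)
The plan is to apply the rank-rigidity theorem (Theorem~\ref{thm:rank2}) not to $M$ itself but to the holonomy tube $\tilde M:=(M)_{\eta_p}$, and then to recover $M$ by focalization. First I record the structure that is already available. Since $\|\eta_p\|$ is small and $M$ is compact, $\tilde M$ is a compact embedded submanifold; by hypothesis $K$ is transitive on it, so $\tilde M$ is homogeneous; and since $\Phi^*(p)$ does not fix $\eta_p$, the discussion preceding the statement gives that $M=(\tilde M)_{\bar\eta}$ is a parallel focal manifold of $\tilde M$, where $\bar\eta$ is a \emph{non-umbilical} $\nabla^\perp$-parallel normal field on $\tilde M$ with $\bar\eta(p+\eta_p)=-\eta_p$. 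In particular $\dim\tilde M=\dim M+\dim(\Phi(p)\cdot\eta_p)\ge n+1\ge 3$.

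Next I would check that $\tilde M$ is full and irreducible, so that Theorem~\ref{thm:rank2} applies; both follow from $M=(\tilde M)_{\bar\eta}$. A parallel manifold of a submanifold contained in an affine subspace $V$ is itself contained in a translate of $V$, because the component of a $\nabla^\perp$-parallel normal field orthogonal to $V$ is forced to be a constant vector. Hence if $\tilde M$ were non-full, then $M$ would lie in a proper affine subspace, contradicting fullness of $M$. Likewise an orthogonal splitting $\tilde M=\tilde M_1\times\tilde M_2$ would split $\bar\eta$ and produce $M=(\tilde M_1)_{\bar\eta_1}\times(\tilde M_2)_{\bar\eta_2}$; a factor focalizing to a point would again confine $M$ to a proper affine subspace, while two nontrivial factors would contradict irreducibility of $M$. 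Thus $\tilde M$ is full and irreducible.

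The crux is the rank count. The field $\bar\eta$ shows $\mathrm{rank}(\tilde M)\ge 1$, so Theorem~\ref{thm:rank2} already forces $\tilde M$ into a sphere; the associated position vector field $P$ is then a parallel \emph{umbilical} normal field. Since $\bar\eta$ is non-umbilical while $P$ is umbilical and both are parallel, they must be linearly independent at every point: if $\bar\eta(x_0)=\lambda P(x_0)$ at one point, then parallelity propagates $\bar\eta\equiv\lambda P$ on all of the connected $\tilde M$, making $\bar\eta$ umbilical. Therefore $\mathrm{rank}(\tilde M)\ge 2$, and the second assertion of Theorem~\ref{thm:rank2} yields a compact group $\hat K\subset SO(N)$ acting as an irreducible $s$-representation with $\tilde M=\hat K\cdot\tilde p$, where $\tilde p=p+\eta_p$.

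Finally I transfer the $s$-representation structure from $\tilde M$ back to $M$. Applying Theorem~\ref{thm:O1C} to $\tilde M=\hat K\cdot\tilde p$, every $\hat k\in\hat K$ acts on $\nu_{\tilde p}\tilde M$ as a $\nabla^\perp$-parallel transport, so the parallel field $\bar\eta$ is $\hat K$-equivariant, i.e. $\bar\eta(\hat k\tilde p)=d\hat k\,(\bar\eta(\tilde p))$. Consequently the focal manifold $M=(\tilde M)_{\bar\eta}=\{x+\bar\eta(x):x\in\tilde M\}$ is $\hat K$-invariant, and since $\hat K$ is transitive on $\tilde M$ and $\tilde p+\bar\eta(\tilde p)=p$, we conclude $M=\hat K\cdot p$, an orbit of the $s$-representation. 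The main obstacle I anticipate is not any single hard estimate but the bookkeeping of the two middle paragraphs—verifying fullness and irreducibility of $\tilde M$ and the independence of $\bar\eta$ from the position field—so that the rank theorem genuinely applies; the decisive idea is that sphere-containment, forced by the single non-umbilical field, upgrades the rank from one to two.
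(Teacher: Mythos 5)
Your proof is correct and follows essentially the same route as the paper: pass to the holonomy tube $(M)_{\eta_p}$, observe it is a full irreducible compact homogeneous submanifold of rank at least two, apply Theorem~\ref{thm:rank2} to realize it as an $s$-orbit of some $\hat K$, and use Theorem~\ref{thm:O1C} to make $\bar\eta$ $\hat K$-equivariant and recover $M=\hat K\cdot p$. The only difference is that you supply the verifications (fullness, irreducibility, and the linear independence of $\bar\eta$ from the position field giving rank $\geq 2$) that the paper states without proof.
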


\Pf By assumption $K$ acts transitively on $(M)_{\eta_p}$. This implies
that the  non-umbilical parallel normal vector field $\bar \eta$
with $\bar\eta(p+\eta_p)=-\eta_p$ yields $((M)_{\eta _p})_{\bar \eta}=M$.
$(M)_{\eta_p}$ is a full irreducible compact homogeneous  Euclidean submanifold
of dimension and rank at least $2$. Due to Theorem~\ref{thm:rank2},
$\hat K\cdot (p+\eta _p) = (M)_{\eta _p}$,
where $\hat K$ acts as an $s$-representation.
Finally, owing to Theorem~\ref{thm:O1C}, $\bar \eta$ is  $\hat K$-invariant,
so $M=\hat K\cdot p$, and this is also an orbit of an $s$-representation.
\EPf

\begin{rmk}\label{rem:equiv}
    From Corollary \ref{cor:O1C} it follows that the assumption of Corollary \ref{cor:holhom} can be replaced  by:
    $(K_p)^0\cdot \eta _p = \Phi^*(p)\cdot \eta _p$ and this orbit is
    not a point.
\end{rmk}

We finish this section with a result that will be used later.

\begin{thm}\label{thm:cod1}
  Let $M^n =K\cdot v$, $n\geq 2$,  be a full irreducible
  compact homogeneous submanifold of 
$\mathbb R^{n+k}$, where $k\geq 4$ . Assume that $\rho (K_p)$ is a subgroup of codimension at most one of 
$\Phi(p)$.  Then $M$ is an orbit of an $s$-representation.     
\end{thm}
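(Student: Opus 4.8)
The plan is to manufacture a normal vector $\eta_p\in\nu_pM$ that realizes the hypothesis of Remark~\ref{rem:equiv}, so that Corollary~\ref{cor:holhom} applies and forces $M$ to be an orbit of an $s$-representation. Recall that by (the observation following) Theorem~\ref{thm:O1C} one has $\rho(K_p)\subset\Phi(p)$, where $\rho$ is the slice representation; write $G:=\Phi^*(p)$ for the restricted normal holonomy group and $H:=\rho((K_p)^0)\subset G$, a compact connected subgroup. Since $\dim\Phi(p)=\dim G$ and $\dim\rho(K_p)=\dim H$, the codimension hypothesis reads $\dim G-\dim H\leq 1$. First I would dispose of the easy reduction: if $\mathrm{rank}(M)\geq2$ we are done at once by Theorem~\ref{thm:rank2}, so I may assume $\mathrm{rank}(M)\leq1$. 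By the normal holonomy theorem (see~\cite{BCO}), $G$ acts on $\nu_pM$ as an $s$-representation, giving an orthogonal decomposition $\nu_pM=V_0\oplus V_1\oplus\cdots\oplus V_m$ into the fixed subspace $V_0$, of dimension $\mathrm{rank}(M)\leq1$, and nontrivial irreducible factors $V_i$, with $G=G_1\times\cdots\times G_m$ acting blockwise. As $k\geq4$, the nonflat part has dimension $k-\dim V_0\geq3$; in particular $G$ is nontrivial and carries non-fixed vectors.

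Next I reduce the goal to a Lie-algebraic condition. Since $H\subset G$ are compact and connected, $H\cdot\eta_p$ is a compact, hence closed, subset of the connected orbit $G\cdot\eta_p$, so the two orbits coincide precisely when they have equal dimension, that is, when $\mathfrak{g}_{\eta_p}+\mathfrak{h}=\mathfrak{g}$, where $\mathfrak{g}_{\eta_p}$ is the isotropy subalgebra at $\eta_p$. In the codimension-zero case $H=G$, and any $\eta_p$ in the nonflat part yields $H\cdot\eta_p=G\cdot\eta_p$, not a point, so Remark~\ref{rem:equiv} concludes. In the codimension-one case, $G/H$ is a one-dimensional compact homogeneous space, hence a circle, and a compact connected group acting effectively and transitively on $S^1$ is $SO(2)$; therefore $H$ is normal with $G/H\cong SO(2)$. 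Consequently $[\mathfrak{g},\mathfrak{g}]\subset\mathfrak{h}$ and $\mathfrak{h}=[\mathfrak{g},\mathfrak{g}]\oplus\mathfrak{z}_0$, where $\mathfrak{z}_0$ is a hyperplane in the center $\mathfrak{z}(\mathfrak{g})$. Fixing a central element $c\in\mathfrak{z}(\mathfrak{g})\setminus\mathfrak{z}_0$ gives $\mathfrak{g}=\mathfrak{h}\oplus\mathbb{R}c$, and the required condition becomes $\mathfrak{g}_{\eta_p}\not\subset\mathfrak{h}$, i.e.\ $\mathfrak{g}_{\eta_p}$ must contain an element with nonzero $c$-component.

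It remains to produce such an $\eta_p$, and this is the heart of the matter. Write $c=\sum_i c_i$ with $c_i\in\mathfrak{z}(\mathfrak{g}_i)$; by Schur's lemma each $c_i$ acts on $V_i$ either as zero or as a nonzero multiple of a complex structure. If $m\geq2$, I would take $\eta_p$ supported on a single factor $V_j$: then every $\mathfrak{g}_i$ with $i\neq j$ fixes $\eta_p$, so $\bigoplus_{i\neq j}\mathbb{R}c_i\subset\mathfrak{g}_{\eta_p}$; as $j$ varies these subspaces span $\mathfrak{z}(\mathfrak{g})$ (using $m\geq2$), so they cannot all lie in the hyperplane $\mathfrak{z}_0$, and a suitable choice of $j$ gives $\mathfrak{g}_{\eta_p}\not\subset\mathfrak{h}$. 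If $m=1$, then $\mathfrak{z}(\mathfrak{g})\neq0$ forces $V_1$ to be of Hermitian type with $c=c_1$ acting as a multiple of its complex structure; since $\dim V_1\geq3$ is even we have $V_1\cong\mathbb{C}^{N_1}$ with $N_1\geq2$, so the semisimple part $[\mathfrak{g}_1,\mathfrak{g}_1]=\mathfrak{h}$ is nonzero and acts nontrivially and $\mathbb{C}$-linearly. I would then pick a nonzero $h'\in\mathfrak{h}$ with a prescribed nonzero purely imaginary eigenvalue and an eigenvector $\eta_p$ satisfying $h'\eta_p=-c\,\eta_p$, so that $c+h'\in\mathfrak{g}_{\eta_p}\setminus\mathfrak{h}$. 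In every case $\eta_p$ is not $G$-fixed, so after rescaling $\eta_p$ to be sufficiently small (which does not change the isotropy subalgebra) we invoke Remark~\ref{rem:equiv} and Corollary~\ref{cor:holhom} to conclude that $M$ is an orbit of an $s$-representation. The main obstacle is exactly the codimension-one single-factor case, where neither the flat part nor an auxiliary irreducible factor is available to host $\eta_p$; there the hypothesis $k\geq4$ is used to guarantee $N_1\geq2$, which is precisely what makes the eigenvector construction possible.
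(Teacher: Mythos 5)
Your proof is correct, and although its skeleton --- reduce via Theorem~\ref{thm:rank2} and the normal holonomy theorem to a decomposition $\nu_pM=V_0\oplus V_1\oplus\cdots\oplus V_m$, then manufacture an $\eta_p$ with $\rho((K_p)^0)\cdot\eta_p=\Phi^*(p)\cdot\eta_p$ and feed it to Remark~\ref{rem:equiv} and Corollary~\ref{cor:holhom} --- is the same as the paper's, the decisive case is settled by a genuinely different argument. In the multi-factor case the paper observes that some projection $\rho_i((K_p)^0)$ must fill out $\Phi_i$ and places $\eta_p$ in that factor; your central-element bookkeeping lands in the same place by a slightly different route, and both are fine. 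The real divergence is the single irreducible factor with codimension one, where the paper works geometrically: it splits on whether $\Phi_1$ is transitive on the sphere of $V_1$, excludes cohomogeneity one via polarity, and otherwise passes to the isoparametric orbit $\Phi_1\cdot\eta_p$, picks a curvature distribution $E$ complementing the $\rho_1((K_p)^0)$-orbit, and focalizes it to reach a singular holonomy orbit on which the two groups agree. You replace all of this isoparametric machinery by the purely algebraic remark that a codimension-one ideal of a compact Lie algebra forces a nontrivial centre, so $V_1$ is complex with $c$ acting as a multiple of the complex structure $J$, and a suitably rescaled $h'\in\mathfrak{h}=[\mathfrak g,\mathfrak g]$ has a $(-\lambda J)$-eigenvector $\eta_p$ on which $c+h'$ vanishes, giving $\mathfrak g_{\eta_p}\not\subset\mathfrak h$. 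This is more elementary and self-contained; it also makes explicit a structural fact the paper leaves implicit (that this case can only occur when $\Phi_1$ has a one-dimensional centre acting as a complex structure). Two points are worth spelling out when you write this up: the eigenvector is a genuine vector of $V_1$ because $V_1$ is already a complex space via $J$, so no complexification is needed; and a nonzero $h'$ with a nonzero eigenvalue exists because $[\mathfrak g,\mathfrak g]\neq0$ acts faithfully (an abelian $\mathfrak g$ cannot act irreducibly on a real space of dimension at least $3$, which is where $k\geq4$ enters, exactly as in the paper).
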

\begin{proof}
  If the codimension is zero, then $\rho((K_p)^0)=\Phi^*(p)$.
Either we can find a non-trivial orbit of $\Phi^*(p)$ in $\nu_pM$   
and we apply Corollary~\ref{cor:holhom} and Remark~\ref{rem:equiv}),
or there are no non-trivial orbits and we apply Theorem~\ref{thm:rank2};
in both cases we deduce that $M$ is an orbit of an $s$-representation.

So let us assume that the codimension is one. 
The compactness of $\Phi^*(p)$ implies that 
$\rho((K_p)^0)$ is a normal subgroup.
In view of the normal holonomy theorem~\cite{BCO},
$\Phi^*(p)$ acts on $\nu _pM$ 
as an $s$-representation, up to a component $V_0$ of fixed vectors.
The case  $\dim V _0\geq 2$ is again covered by Theorem~\ref{thm:rank2},
so we may assume that  $V_0 = \mathbb R p$.
Consider $\nu _pM =\mathbb R p\oplus V_1\oplus\cdots \oplus V_r$,
$\Phi^*(p)$-irreducible decomposition, where
$\Phi^* (p)= \Phi _1 \times \cdots \times \Phi_r$,
and $\Phi _i$ acts irreducibly on $V_i$ and trivially on $V_j$, for~$i\neq j$. Denote by 
$\rho_i((K_p)^0))\subset \Phi _i$ the projection of $\rho((K_p)^0))$ to
$V _i$ for $i=1,\ldots, r$. Observe that $\rho((K_p)^0)\subset
\rho_1((K_p)^0)\times \cdots \times \rho_r((K_p)^0)$.

Suppose first that $r\geq 2$. 
Since the codimension of $\rho((K_p)^0)$~in $\Phi^*(p)$ is one,
there must exist an index $i$,
which we may assume to be $1$, such that $\rho_1((K_p)^0)=\Phi_1$.
Let $\eta_p \in V_1$ be a sufficiently small nonzero vector
such that $(M)_{\eta _p}$ is an embedded submanifold.
Then the assumptions of Corollary \ref{cor:holhom} are satisfied
so that~$M$ is an orbit of an $s$-representation
(compare Remark \ref{rem:equiv}).

It remains to analyse the case in which $r=1$. Since $k\geq 4$,
$\dim V_1\geq 3$.
Now $\rho_1((K_p)^0)$ has codimension $1$ in $\Phi_1$.
Consider the case in which $\Phi _1$
acts transitively on the sphere of $V_1$ of radius
$\Vert \eta _p\Vert$.
If $\rho _1((K_p)^0)$ also acts transitively on this sphere,
then $K$ acts transitively on the holonomy tube $(M)_{\eta _p}$,
where $0 \neq \eta _p \in V _1$ is sufficiently small.
Then  $M$ is an orbit of an $s$-representation by Corollary~\ref{cor:holhom}.
If $\rho _1((K_p)^0)$ acts with cohomogeneity~$1$ on the sphere,
then all orbits must have codimension~$1$, since $\rho _1((K_p)^0))$ is a normal subgroup of $\Phi _1$. This is a contradiction because
an action of cohomogeneity~$1$ on a sphere is polar and has
thus singular orbits.
Finally we  assume that  $\Phi_1$ does not act transitively on the sphere.
We may choose $\eta_p$ such that its holonomy orbit is principal.
By Corollary~\ref{cor:holhom} we may assume that
$\rho _1((K_p)^0)$ is not transitive on the  principal orbit
$\Phi_1\cdot \eta _p$. Then, again by normality of
$\rho _1((K_p)^0)$ in~$\Phi _i$, all orbits of $\rho_1((K_p)^0)$
in the irreducible isoparametric submanifold $\Phi_1\cdot \eta _p \subset V_1$ have codimension $1$. There must exist a curvature distribution
$E$ of~$\Phi_1\cdot\eta_p$
such that $E_{\eta_p} + T_{\eta_p}(\rho_1((K_p)^0)\cdot\eta_p)=
T_{\eta_p}(\Phi_1\cdot\eta_p)$. If we pass to a singular orbit
$\Phi_1\cdot\bar\eta$ that focalizes the eigendistribution $E$, then 
$\rho_1((K_p)^0)\cdot\bar\eta=\Phi_1\cdot\bar\eta$.
Due to Remark~\ref{rem:equiv}, $M$ is an orbit of an $s$-representation.
This finishes the proof. 
\end{proof}

\section{Involutions of orbits of s-representations}\label{sec:invol}

There are certain canonical extrinsic isometries of
order~$2$ of orbits of s-representations, which were
constructed in~\cite{BOR}. Herein we recall their properties
and present an alternative construction.

\begin{prop}
Let $X=G/K$ be a simply-connected
symmetric space of compact type without Euclidean factor,
where $G$ is simply-connected and $K$ is connected, 
write $\Lg=\Lk+\Lp$ for the decomposition of the Lie algebra
of $G$ into the eigenspaces of the involution, and
consider the orbit $M=Ka_0$   
of the isotropy representation of $K$ through a point $a_0\in\Lp$. 
Then there exists a canonically defined
involutive isometry $f$ of $X$ such that:
\begin{enumerate}
\item $f$ fixes the basepoint $o$ of $X$. 
\item $df_{o}$ preserves all $K$-orbits in $T_{o}X\cong\Lp$, fixes $a_0$ and restricts to the identity along the normal space $\nu_{a_0}M$.
\item In case $X$ is irreducible and of rank bigger than one,
$M$ is extrinsically symmetric if and only if the fixed point set
of $f$ is precisely $\nu_{a_0}M$. 
\end{enumerate}
\end{prop}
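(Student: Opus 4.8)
The plan is to realize $f$ as an isometry of $X$ fixing $o$ whose differential acts by signs on the restricted root spaces, and to deduce (3) from the interaction of these signs with the $3$-grading that characterizes extrinsically symmetric orbits. First I would fix a maximal abelian subspace $\mathfrak{a}\subset\mathfrak{p}$ with $a_0\in\mathfrak{a}$ and use the restricted root decomposition $\mathfrak{g}=\mathfrak{g}_0\oplus\bigoplus_{\lambda\in\Sigma}\mathfrak{g}_\lambda$, with $\mathfrak{g}_\lambda=\mathfrak{k}_\lambda\oplus\mathfrak{p}_\lambda$ and $\ad(H)$ rotating $\mathfrak{k}_\lambda$ into $\mathfrak{p}_\lambda$ for $H\in\mathfrak{a}$. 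Since $T_{a_0}M=\ad(\mathfrak{k})a_0=\bigoplus_{\lambda(a_0)\neq0}\mathfrak{p}_\lambda$ and, by invariance of the inner product, $\nu_{a_0}M=\mathfrak{p}\cap\mathfrak{z}(a_0)=\mathfrak{a}\oplus\bigoplus_{\lambda(a_0)=0}\mathfrak{p}_\lambda$, the splitting $\mathfrak{p}=\nu_{a_0}M\oplus T_{a_0}M$ is exactly the separation of roots vanishing from those not vanishing on $a_0$. I would also record that $\mathfrak{z}(a_0)=\mathfrak{k}_{a_0}\oplus\nu_{a_0}M$ and that the orthogonal reflection $R$ of $\mathfrak{p}$ fixing $\nu_{a_0}M$ and equal to $-\mathrm{Id}$ on $T_{a_0}M$ coincides, because $a_0\in\nu_{a_0}M$, with the affine extrinsic reflection about the affine normal space $a_0+\nu_{a_0}M$.

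For the construction I would build an involutive automorphism $\theta_{a_0}$ of $\mathfrak{g}$ that is $+\mathrm{Id}$ on $\mathfrak{g}_0$ and acts as $(-1)^{\varepsilon(\lambda)}$ on each $\mathfrak{g}_\lambda$, where $\varepsilon\colon\Sigma\to\Z/2$ is additive ($\varepsilon(\lambda\pm\mu)=\varepsilon(\lambda)+\varepsilon(\mu)$) and vanishes on the roots killing $a_0$; equivalently $\theta_{a_0}=\Ad(\exp\pi H_0)$ for a lattice vector $H_0\in\mathfrak{a}$ with $\lambda(H_0)\in\Z$ for all $\lambda$ and $\lambda(H_0)\in2\Z$ whenever $\lambda(a_0)=0$. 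Additivity of $\varepsilon$ is precisely what makes $\theta_{a_0}$ a Lie algebra automorphism; it preserves $\mathfrak{k}$ and $\mathfrak{p}$ and commutes with the Cartan involution. As $G$ is simply connected, $\theta_{a_0}$ integrates to $\Theta\in\Aut(G)$ with $\Theta(K)=K$ ($K$ being connected), hence descends to an involutive isometry $f$ of $X=G/K$ fixing $o$, which is (1). For (2): $df_o=\theta_{a_0}|_{\mathfrak{p}}$ fixes $\mathfrak{a}$ pointwise, so fixes $a_0$; it is $+\mathrm{Id}$ on $\nu_{a_0}M$ by the choice of $\varepsilon$; and since $\Theta$ normalizes $K$ one has $df_o\circ\Ad(k)\circ df_o^{-1}=\Ad(\Theta(k))$, so $df_o(Kv)=K\,df_o(v)$ and every $K$-orbit is preserved. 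The first point to settle, where I would lean on \cite{BOR}, is that $\varepsilon$ (equivalently $H_0$) can be chosen canonically, depending only on the face of the Weyl chamber through $a_0$.

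To prove (3) I would argue through $R$. By construction $df_o$ preserves $\mathfrak{p}=\nu_{a_0}M\oplus T_{a_0}M$ and is $+\mathrm{Id}$ on $\nu_{a_0}M$, so $\mathrm{Fix}(df_o)=\nu_{a_0}M$ holds iff $df_o=-\mathrm{Id}$ on $T_{a_0}M$, i.e. iff $df_o=R$. If so, then $R$ preserves $M$ (because $df_o$ preserves all $K$-orbits), and since $R$ is the affine extrinsic reflection at $a_0$, conjugating by $k\in K$ produces an extrinsic symmetry at each $ka_0$ (it fixes $ka_0$ and is $-\mathrm{Id}$ on $T_{ka_0}M$), so $M$ is extrinsically symmetric. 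Conversely, a full irreducible extrinsically symmetric orbit of an $s$-representation is a symmetric $R$-space \cite{Fe,KN}: then $a_0$ is a multiple of a grading element, $\ad(a_0)$ has eigenvalues in $\{0,\pm i\}$, and $\mathfrak{g}$ carries a $3$-grading $\mathfrak{g}_{-1}\oplus\mathfrak{g}_0\oplus\mathfrak{g}_1$ with $T_{a_0}M=\mathfrak{p}\cap(\mathfrak{g}_1\oplus\mathfrak{g}_{-1})$; here $\lambda(a_0)\in\{0,\pm1\}$ in suitable units, so the canonical sign is $-1$ on each $\mathfrak{p}_\lambda\subset T_{a_0}M$ and $df_o=R$. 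The irreducibility and rank $>1$ hypotheses enter exactly here: they force the reduced, single-value root pattern of symmetric orbits (ruling out, e.g., the nonreduced rank-one phenomenon where $\varepsilon(2\lambda)=2\varepsilon(\lambda)=0$ produces unavoidable extra fixed directions) and permit appeal to the structure of symmetric $R$-spaces.

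I expect the main obstacle to be the converse half of (3): showing that the canonical $f$ actually realizes $R$ when $M$ is symmetric. This is not formal, since many orbit-preserving involutions restrict to $+\mathrm{Id}$ on $\nu_{a_0}M$ yet differ on $T_{a_0}M$; one must check that the canonical assignment $\varepsilon$ is forced to equal $1$ on every root not vanishing on $a_0$ precisely in the symmetric case. Pinning this down, together with the well-definedness and canonicity of $\varepsilon$ (independence of auxiliary choices and dependence only on the orbit type), is where the root-theoretic bookkeeping and the input of \cite{BOR} and the $R$-space classification are needed; the remaining verifications for (1), (2) and the forward direction of (3) are routine.
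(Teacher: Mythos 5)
Your proposal is correct and follows essentially the same route as the paper: the paper normalizes $a_0$ so that each simple restricted root takes the value $0$ or $\sqrt{-1}$ on it (your lattice vector $H_0$), sets $\Phi=e^{\pi\ad_{a_0}}=\Ad_{\exp\pi a_0}$, integrates to an automorphism of the simply-connected $G$ and descends to $X$, and obtains the sign $(-1)^{|\alpha(a_0)|}$ on each $\Lp_\alpha$. For part~(3) the paper invokes the Eschenburg--Heintze characterization of extrinsically symmetric orbits ($a_0$ dual to a simple restricted root with coefficient~$1$ in the highest restricted root), which is the same structural fact as the symmetric $R$-space $3$-grading you use.
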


\Pf Let $\La$ be a Cartan subspace of $\Lp$ containing $a_0$
and consider the restricted root space decomposition
\[ \Lk=\Lk_0+\sum_{\alpha\in\Delta^+}\Lk_\alpha,\quad
\Lp=\La+ \sum_{\alpha\in\Delta^+}\Lp_\alpha. \]
Recall that the restricted roots are purely imaginary valued
on $\La$ since $X$ is of compact type.

Denote by $\alpha_1,\ldots,\alpha_q$ the simple
restricted roots and note that
\begin{equation}\label{tang}
  T_a(Ka) = \sum_{\alpha:\alpha(a)\neq0}\Lp_\alpha
\end{equation}
and
\begin{equation}\label{norm}
  \nu_a(Ka) = \La+ \sum_{\alpha:\alpha(a)=0}\Lp_\alpha
\end{equation}
for all $a\in\La$. Therefore, by replacing $a_0$ with another
element of~$\La$ whose $K$-orbit has the same tangent space,
we may assume that for each~$i$, $\alpha_i(a_0)=0$ equals
either~$0$ or~$\sqrt{-1}$. 
If we extend~$\La$ to a Cartan subalgebra $\Lh$ of $\Lg$,
it follows that all roots of $\Lg$ relative to $\Lh$
take integer values on $\frac1{\sqrt{-1}}a_0$ and thus $\exp 2\pi a_0$ lies in the
center~$Z(G)$ of~$G$.
Hence
\begin{equation}\label{inv}
  e^{2\pi\mathrm{ad}_{a_0}}=\mathrm{Ad}_{\exp2\pi a_0}=1.
\end{equation}

For each $x\in\Lk_\alpha$
we denote by $\hat x\in\Lp_\alpha$ the vector related to $x$
in the sense that
\[ \ad_a x =-\sqrt{-1}\alpha(a)\hat x,\quad \ad_a\hat x=\sqrt{-1}\alpha(a) x \]
for all $a\in\La$. 
Similarly, each $y\in\Lp_\alpha$
we denote by $\check y\in\Lk_\alpha$ the vector related to $y$
in the sense that
\[ \ad_a y =\sqrt{-1}\alpha(a)\check y,\quad \ad_a\check y=-\sqrt{-1}\alpha(a) y \]
for all $a\in\La$. 
An easy calculation yields that
\begin{equation}\label{exp-k}
 e^{t \mathrm{ad}_a}\cdot x =\cos(t|\alpha(a)|)x +\sin(t|\alpha(a)|)\hat x 
\end{equation}
for $x\in\Lk_\alpha$, and
\begin{equation}\label{exp-p} e^{t \mathrm{ad}_a}\cdot y =
  \cos (t |\alpha(a)|)y -\sin (t|\alpha(a)|)\check y 
\end{equation}
for $y\in\Lp_\alpha$. 

Consider the automorphism 
\[ \Phi=e^{\pi\mathrm{ad}_{a_0}}=\mathrm{Ad}_{\exp\pi a_0} \]
of $\Lg$. It follows from~(\ref{exp-k}) and ~(\ref{exp-p})
that $\Phi$ preserves $\Lk$ and $\Lp$. Also, $\Phi:\Lp\to\Lp$
is an isometry since the inner product is $\mathrm{Ad}$-invariant. 
Since $G$ is simply-connected, there is a
unique  automorphism $\varphi$ of $G$ such that $d\varphi=\Phi$. 
Define $f:X\to X$ to be $f(gK)=\varphi(g)K$. Then $f(o)=f(1K)=1K=o$ and 
$df_{o}=\Phi$ is a linear isometry. 
Since~$f(hgK)=\varphi(h)\varphi(g)K$
for $g$, $h\in G$, we now get that $f$ is a global isometry. 
It also follows from~(\ref{inv}) that 
$\Phi$ and $f$ are involutions.

Owing to the fact that $\La$ is Abelian and from~(\ref{norm}), we see
that $\Phi$ restricts to the identity along~$\nu_{a_0}M$. Further,
\begin{align*}
\Phi(\mathrm{Ad}_kb)&=\Phi\left(\frac{d}{dt}\Big|_{t=0}k\exp tbk^{-1}\right)\\
&=\frac{d}{dt}\Big|_{t=0}\varphi(k)\exp t\Phi(b)\varphi(k)^{-1}\\
&=\mathrm{Ad}_{\varphi(k)}\Phi(b)
\end{align*}
for all $k\in K$, $b\in \La$, so $\Phi$ preserves all $K$-orbits in~$\Lp$. 

Note that on $\Lp_\alpha\subset T_{a_0}M$ we have that
\begin{equation}\label{Phi-on-palpha}
  \Phi|_{\mathfrak p_\alpha}=(-1)^{|\alpha(a_0)|}\mathrm{id},
\end{equation}
where $0\neq|\alpha(a_0)|\in\Z$. 

Finally, $M=Ka_0$ is extrinsically symmetric if and only if $a_0$ can be taken  
dual to a simple restricted root $\alpha_i$ with coefficient~$1$ in the 
highest restricted root~\cite[Remarks, p.523]{EH}.
This implies that $\alpha(a_0)=1$ 
for every positive restricted root $\alpha$ that does not kill~$a_0$.  
Hence $\Phi|_{T_{a_0}M}=-\mathrm{id}$. \EPf

\begin{rmk}

Put $g=\exp \pi a_0$. Then $\sigma(g)=g^{-1}=g$, so
$\exp\pi a_0$ normalizes $K$. This implies that the tangent
spaces to $M$ at~$o$ and $\exp\pi a_0\cdot o$ are canonically isomorphic
to~$\Lp$. Now we claim that $df_o=\Phi$ coincides with the 
parallel transport along $\gamma(t)=(\exp ta_0)\cdot o$ from $t=0$ to $t=\pi$.
Recall $X$ is simply-connected and consider the covering $\pi:X\to \bar X$,
where $\bar X$ is the bottom space. It is enough to show that 
$\Phi:\Lp\cong T_{\pi(o)}\bar X\to\Lp\cong T_{\pi(o)}\bar X$ is the 
parallel transport along $\bar\gamma(t)=\pi\circ\gamma(t)$ 
from $t=0$ to $t=\pi$ (note that $\bar\gamma(\pi)=\bar\gamma(0)=\pi(o)$). 
In fact, $g$ satisfies $(dL_g)_o=\mathrm{Ad}_gd(R_{g^{-1}})_o=\mathrm{Ad}_g=\Phi$, since $R_{g^{-1}}=\mathrm{id}$ on $\bar X$, and $t\mapsto\exp ta_0\in\exp[\Lp]$ is a 
one-parameter group of transvections of $\bar X$.
\end{rmk}

\section{The proof of Theorem~\ref{thm:struct}}

Let $M^n=Kp$, $n\geq2$,  be a full irreducible compact homogeneous almost symmetric
submanifold of Euclidean space $V$ of codimension~$k$.
Let $\tilde K$ be the group of ambient isometries that preserve $M$.
Then $\tilde K\supset K$.

\begin{lem}\label{lem:uniq}
If there exist two different almost symmetries at~$p$, then $M$ is an orbit
of an s-representation.
\end{lem}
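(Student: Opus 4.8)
The plan is to show that the existence of two distinct almost symmetries at $p$ forces the $K$-invariant mixed tensor $D^\perp=\nabla^\perp-\tilde\nabla^\perp$ to vanish identically, so that $\mathcal D^K=TM$ and Theorem~\ref{thm:OS} applies. As a preliminary reduction I would take $K=\tilde K$, the full (compact) group of ambient isometries preserving $M$: this does not change the identity component, hence $M=K^0p$ is unaffected, and every almost symmetry then lies in $K_p$. Choosing the auxiliary inner product $b$ on $\Lk$ to be $\mathrm{Ad}(\tilde K)$-invariant makes the canonical normal connection $\tilde\nabla^\perp$ invariant under all of $\tilde K$; since $\nabla^\perp$ is invariant under every ambient isometry preserving $M$, the tensor $D^\perp$ is then invariant under every almost symmetry at $p$.

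The heart of the argument is a short linear-algebra observation at $p$. An almost symmetry $\sigma_p$ fixes $p$, restricts to the identity on $\nu_pM$, and on $T_pM$ is an orthogonal involution whose fixed set is a line $L$; hence $d\sigma_p$ is $+\mathrm{id}$ on $L$ and $-\mathrm{id}$ on the orthogonal complement $L^\perp\subset T_pM$. Because $D^\perp$ is a tensor, $D^\perp_z\in\mathrm{End}(\nu_pM)$ depends linearly on $z$, and invariance gives $D^\perp_{d\sigma_p z}=d\sigma_p\circ D^\perp_z\circ d\sigma_p^{-1}=D^\perp_z$ for all $z$, using $d\sigma_p|_{\nu_pM}=\mathrm{id}$. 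Evaluating on $z\in L^\perp$, where $d\sigma_p z=-z$, yields $-D^\perp_z=D^\perp_z$, so $D^\perp$ vanishes on the hyperplane $L^\perp$. A second almost symmetry $\sigma_p'\neq\sigma_p$ has fixed line $L'\neq L$ (an ambient isometry fixing $p$ is determined by its differential, and $d\sigma_p$ is completely determined by $L$), so $D^\perp$ also vanishes on the hyperplane $(L')^\perp$. Since $L\neq L'$ the two hyperplanes are distinct, whence $L^\perp+(L')^\perp=T_pM$, and linearity of $z\mapsto D^\perp_z$ forces $D^\perp|_p=0$, i.e.\ $\mathcal D^K_p=T_pM$. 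By $K$-invariance and homogeneity $\mathcal D^K=TM$, and Theorem~\ref{thm:OS} then shows that $M$ is an orbit of an $s$-representation.

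The only genuinely delicate point is the reduction in the first paragraph: a priori $\tilde\nabla^\perp$, and hence $D^\perp$, is guaranteed only to be $K$-invariant, whereas the almost symmetries may lie in components of $\tilde K$ outside $K$. This is exactly why I would pass to $K=\tilde K$ at the outset, or equivalently fix $b$ to be $\mathrm{Ad}(\tilde K)$-invariant, so that $\mathrm{Ad}(\sigma_p)$ preserves the reductive complement $\Lm=\Lk^\perp$ and the canonical connection is preserved by $\sigma_p$. Once the invariance of $D^\perp$ under the almost symmetries is secured, the remainder is the elementary hyperplane computation above, and no analysis of the normal holonomy or of holonomy tubes is needed.
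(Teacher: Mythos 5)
Your proof is correct and follows essentially the same route as the paper: the paper also deduces $D^\perp=0$ from the fact that the almost symmetries lie in the kernel $H$ of the slice representation of $\tilde K_p$ and that two distinct ones leave no nonzero fixed vector in $T_pM$ (phrased there as the vanishing of the $H$-invariant functionals $\ell(w)=\langle D^\perp_w\xi,\eta\rangle$, which is your sign-rule computation in dual form), and then invokes Theorem~\ref{thm:OS}. Your explicit attention to why $D^\perp$ is invariant under the full group $\tilde K$ rather than just $K$ is a point the paper leaves implicit, and is handled correctly.
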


\Pf 
Denote by~$H$ the kernel of the slice 
representation $\rho: \tilde K_p \to  O(\nu _pM)$. 
We claim that if $H$ does not fix any non-zero vector of $T_pM$, then 
$M$ coincides with an orbit of 
an $s$-representation.
In fact, for arbitrary $\xi$, $\eta \in \nu _pM$, the linear functional on $T_pM$ given by 
$\ell (w) = \langle D_w^\perp\xi ,  \eta\rangle$ is $H$-invariant. 
Our assumption says that the action of $H$ on the dual space $(T_pM)^*$ does not fix any  
non-zero vector. This imples that $\ell = 0$ and proves that $D^\perp = 0$. 
Now we invoke Theorem \ref{thm:OS} to get that $M$ coincides with an orbit of 
an $s$-representation. We finish by noting that
if $\sigma$, $\nu$ are two 
different almost symmetries of $M$ at~$p$, then they belong to $H$
and their common fixed point set in $T_pM$ is zero, so $H$
does not  fix any non-zero vector of $T_pM$, and the argument is complete. 
\EPf

\subsection{Part~(a)}

Suppose, to the contrary, that $M$ is not an s-orbit.
It follows from Lemma~\ref{lem:uniq} that 
the almost symmetry at any $q\in M$ is uniquely defined, 
and we shall denote it by $\sigma_q$. Define
$\mathcal L_q$ to be 
 the $(+1)$-eigenspace of~$\sigma_q$, for all $q\in M$.

\begin{lem}\label{l}
The subspaces $\mathcal L_q$ for $q\in M$ define a $\tilde K$-invariant
autoparallel distribution $\mathcal L$ on $M$.
Its integral manifold thorugh $q\in M$ is the component of the fixed point set 
of $\sigma_q$ containing~$q$. Finally $\mathcal L$ is invariant under all shape 
operators of $M$.
\end{lem}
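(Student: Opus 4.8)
The plan is to exploit the uniqueness of the almost symmetry, which holds by Lemma~\ref{lem:uniq} precisely because we are assuming $M$ is not an $s$-orbit, and to identify the leaves of $\mathcal{L}$ with fixed-point sets, which are automatically totally geodesic. First I would establish $\tilde K$-invariance by an equivariance argument. For $g\in\tilde K$, the conjugate $g\sigma_q g^{-1}$ is again an involutive ambient isometry preserving $M$: it fixes $gq$, restricts to the identity on $\nu_{gq}M$ (since $\sigma_q$ does so on $\nu_qM$ and $dg_q$ carries $\nu_qM$ isometrically onto $\nu_{gq}M$), and has the one-dimensional fixed tangent set $dg_q(\mathcal{L}_q)$. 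Hence it is an almost symmetry at $gq$, and uniqueness forces $\sigma_{gq}=g\sigma_q g^{-1}$; differentiating gives $dg_q(\mathcal{L}_q)=\mathcal{L}_{gq}$. Taking $g\in K_p$ shows $\mathcal{L}_p$ is $K_p$-invariant, so $\mathcal{L}$ is the homogeneous (hence smooth) $\tilde K$-invariant distribution on $M=Kp$ determined by $\mathcal{L}_p$; being one-dimensional, it is automatically integrable.

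The step I expect to be the main obstacle is showing that the connected component $F_q$ of the fixed-point set of $\sigma_q$ through $q$ is exactly the leaf of $\mathcal{L}$ through $q$. As an isometry of Euclidean space, $\sigma_q$ is affine with a single fixed orthogonal linear part $A$, so $V^+:=\ker(A-\mathrm{Id})$ is one fixed subspace of $V$ independent of the base point. At $q$ the orthogonal decomposition $V=T_qM\oplus\nu_qM$ gives $V^+=\mathcal{L}_q\oplus\nu_qM$, whence $\dim V^+=1+k$. Now $F_q$ is a totally geodesic submanifold of $M$ of constant dimension $\dim\mathcal{L}_q=1$. At any $q'\in F_q$ the map $A$ preserves both $T_{q'}M$ and $\nu_{q'}M$, so $V^+=(V^+\cap T_{q'}M)\oplus(V^+\cap\nu_{q'}M)$; since $\dim(V^+\cap T_{q'}M)=\dim T_{q'}F_q=1$, a dimension count forces $\dim(V^+\cap\nu_{q'}M)=k$, i.e. $\sigma_q$ fixes $\nu_{q'}M$ pointwise. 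Thus $\sigma_q$ is an almost symmetry at $q'$, and uniqueness yields $\sigma_q=\sigma_{q'}$, so $T_{q'}F_q=\mathcal{L}_{q'}$. This identifies $F_q$ as the integral manifold of $\mathcal{L}$ through $q$, giving the middle assertion. The delicate point is precisely the pointwise fixing of the \emph{normal} space at the neighboring fixed points $q'$, without which uniqueness could not be invoked; the dimension count of $V^+$ is what makes it work.

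Finally the remaining two claims follow quickly. Since $F_q$ is a connected component of the fixed-point set of the isometry $\sigma_q|_M$, it is totally geodesic in $M$, and as this holds for every leaf, $\mathcal{L}$ is autoparallel. For shape-operator invariance I would use that $\sigma_q$ preserves the second fundamental form and fixes $\nu_qM$ pointwise: writing $\mathcal{L}_q^\perp\subset T_qM$ for the $(-1)$-eigenspace, for $X\in\mathcal{L}_q$ and $Y\in\mathcal{L}_q^\perp$ one has $\alpha(X,Y)=\alpha(d\sigma_q X,d\sigma_q Y)=\alpha(X,-Y)=-\alpha(X,Y)$, so $\alpha(X,Y)=0$; equivalently $\langle A_\xi X,Y\rangle=0$ for all $\xi\in\nu_qM$. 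By symmetry of $A_\xi$ this gives $A_\xi\mathcal{L}_q\subseteq\mathcal{L}_q$, which is the invariance of $\mathcal{L}$ under all shape operators.
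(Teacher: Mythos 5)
Your proof is correct and follows essentially the same route as the paper: uniqueness of the almost symmetry (Lemma~\ref{lem:uniq}) gives $\tilde K$-equivariance via conjugation, the leaves are identified with connected components of fixed-point sets of the $\sigma_q$ (hence totally geodesic), and shape-operator invariance comes from $d(\sigma_q)_q$ commuting with the $A_\xi$ because it is the identity on $\nu_qM$. The only cosmetic difference is that where the paper propagates $d(\sigma_q)_r|_{\nu_rM}=\mathrm{Id}$ along the leaf by a continuity argument on the (constant) eigenvalue multiplicities, you do it by a dimension count on the fixed subspace $V^+$ of the linear part; both work.
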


\Pf By uniqueness of the almost symmetry, $\sigma_q$ commutes with the isotropy action
of $\tilde K_q$, so $\mathcal L$ is $\tilde K$-invariant. A standard argument
shows that the integral manifold $L(q)$ of $\mathcal L$ through $q$ coincides 
with the component of the fixed point set 
of $\sigma_q$ containing~$q$ (cf.~\cite{BCO}, Lemma~9.1.2 and its proof).
Thus $L(q)$ is totally geodesic. Since $\sigma_q$ is a linear isometry of
the ambient Euclidean space $\mathbb R^N$ for all $q\in M$, 
the differentials $\mathrm d(\sigma_q)$ at different points
have constant eigenvalues ($1$ with multiplicity $N-n+1$,
and $-1$ with multiplicity $n-1$). By a continuity argument,  
we must have that $\mathrm d(\sigma_q)_r|_{\nu _rM}= \mathrm{Id}_{\nu _rM}$ for all $r\in L(q)$. 
This implies that $\sigma_q = \sigma_r$ for all $r\in L(q)$. The last assertion
follows from the fact that $d(\sigma_q)_q|_{\nu_qM}=\mathrm{Id}_{\nu _qM}$, so
$d(\sigma_q)_q$ commutes
the shape operators of $M$ at~$q$. \EPf

\medskip

Denote by  $\mathcal L^\perp$ be distribution on $M$ whose fiber at~$q\in M$
is the orthogonal complement of $\mathcal L_q$ in $T_qM$. 
Recall that $\nabla ^\perp$ denotes the normal connection on $\nu M$ 
induced from the Levi-Civit\'a connection and $\tilde \nabla ^\perp$ 
denotes the canonical connection on $\nu M$ induced by $K$. 

\begin{lem}\label{nablas}
$\nabla^\perp=\tilde\nabla^\perp$ along $\mathcal L^\perp$. 
\end{lem}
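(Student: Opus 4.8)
The plan is to show that the mixed tensor $D^\perp = \nabla^\perp - \tilde\nabla^\perp$ vanishes when its tangent argument lies in $\mathcal L^\perp$. The key structural input is that $\sigma_q$ is an ambient linear isometry fixing the normal space $\nu_q M$ pointwise and acting as $-\mathrm{Id}$ on $\mathcal L_q^\perp$ (recall $\mathcal L_q$ is the $(+1)$-eigenspace and, by the almost symmetry hypothesis, is one-dimensional, so $\mathcal L_q^\perp$ is precisely the $(-1)$-eigenspace of $d(\sigma_q)_q$ in $T_q M$). The strategy is to exploit the naturality of both connections under the ambient isometry $\sigma_q$ together with the uniqueness of the almost symmetry established in Lemma~\ref{lem:uniq} and used in Part~(a).

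First I would record how $\sigma_q$ acts on the relevant objects. Since $\sigma_q$ is an ambient isometry preserving $M$, it preserves the normal bundle and commutes with $\nabla^\perp$ in the sense that $\sigma_q^* \nabla^\perp = \nabla^\perp$. Because $\sigma_q$ lies in $\tilde K_q$ (indeed in the kernel $H$ of the slice representation, as in the proof of Lemma~\ref{lem:uniq}) and the canonical connection $\tilde\nabla^\perp$ is $\tilde K$-invariant, we also have $\sigma_q^*\tilde\nabla^\perp = \tilde\nabla^\perp$. Consequently the difference tensor is invariant: $D^\perp_{d\sigma_q(z)}(\sigma_q \cdot) = \sigma_q \cdot D^\perp_z(\cdot)$ at $q$. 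Now take $z \in \mathcal L_q^\perp = \nu_q M$-fixing $(-1)$-eigenspace. Then $d(\sigma_q)_q(z) = -z$, while $\sigma_q$ acts as the identity on $\nu_q M$, so for $\xi \in \nu_q M$ we obtain $D^\perp_{-z}\xi = D^\perp_z \xi$, forcing $D^\perp_z\xi = -D^\perp_z\xi$, hence $D^\perp_z = 0$ for $z \in \mathcal L_q^\perp$. This gives the claim pointwise at $q$, and running the argument at every $q \in M$ (using that the almost symmetry exists everywhere and is unique in the case at hand) yields $\nabla^\perp = \tilde\nabla^\perp$ along all of $\mathcal L^\perp$.

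The one technical point requiring care is the identification of how $D^\perp$ transforms under $\sigma_q$, since $\sigma_q$ fixes $q$ but acts nontrivially on $T_q M$: one must check that the formula $D^\perp_{d\sigma_q(z)}(\sigma_q\xi) = \sigma_q(D^\perp_z\xi)$ follows correctly from the invariance of each connection, keeping track of the fact that $\sigma_q\xi = \xi$ for normal $\xi$. This reduces to the elementary observation that a tensor invariant under an isometry $\phi$ fixing $q$ satisfies $T(d\phi\,\cdot, d\phi\,\cdot) = d\phi\, T(\cdot,\cdot)$ at $q$; applying this with $\phi = \sigma_q$ and using the eigenvalue data $d(\sigma_q)_q|_{\mathcal L_q^\perp} = -\mathrm{Id}$ and $d(\sigma_q)_q|_{\nu_q M} = +\mathrm{Id}$ produces the sign mismatch that kills $D^\perp$ on $\mathcal L^\perp$. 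I expect this sign-tracking step to be the only genuine obstacle; the invariance of $\tilde\nabla^\perp$ is given by construction, and the invariance of $\nabla^\perp$ is automatic for ambient isometries, so no deep new ingredient is needed beyond the structural results already in hand.
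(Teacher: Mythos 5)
Your proposal is correct and is essentially the paper's own argument: the paper likewise observes that the almost symmetries commute with $D^\perp=\nabla^\perp-\tilde\nabla^\perp$ and that "the sign rule" (the $-1$ on $\mathcal L_q^\perp$ versus $+1$ on $\nu_qM$) forces $D^\perp$ to vanish on $\mathcal L^\perp$; you have merely spelled out the invariance and the sign-tracking in more detail, including the (correct) point that $\sigma_q$ preserves the canonical connection because it lies in $\tilde K_q$.
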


\Pf Of course the almost symmetries commute with $D^\perp=\nabla^\perp-\tilde\nabla^\perp$.
Now the sign rule implies that $\mathcal D^K=\{v\in TM:D^\perp_v=0\}$ contains $\mathcal L^\perp$
as a subdistribution. Our standing assumption that $M$ is not an s-orbit
precludes the possibility that $\mathcal D^K=TM$ via Theorem~\ref{thm:OS}. 
Since $\mathcal L^\perp$ has codimension one in $M$, we deduce that 
$\mathcal L^\perp=\mathcal D^K$, as wished. \EPf

\medskip

Recall that $\rho:K_p\to O(\nu_pM)$ denotes the slice 
representation of $M$ at~$p$.

\begin{lem}
  There exists a one-parameter subgroup $S$ of $SO(\nu_pM)$
that normalizes $\rho(K_p)$ 
  such that
\[ \rho(K_p) \subset \Phi(p) \subset S\cdot \rho(K_p). \]
\end{lem}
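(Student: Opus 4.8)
The plan is to produce $S$ as the one-parameter group generated by the difference tensor $D^\perp$ in the direction of the line field $\mathcal L$, and then to show that, along an arbitrary loop, the discrepancy between the two normal parallel transports is exactly an element of $S$. First I would fix a (locally defined) unit field $e$ spanning the one-dimensional distribution $\mathcal L$ of Lemma~\ref{l}. Since $\mathcal L$ is $K$-invariant it is $\nabla^c$-parallel, and because $\nabla^c$ is a metric connection and $\mathcal L$ has rank one, $e$ is itself $\nabla^c$-parallel, i.e. $\nabla^c e=0$. Recalling that $D^\perp$ is a $K$-invariant mixed tensor and hence $\nabla^c\oplus\tilde\nabla^\perp$-parallel, I set $B:=D^\perp_e\in\Gamma(\mathfrak{so}(\nu M))$; the identity $(\nabla^c\oplus\tilde\nabla^\perp)D^\perp=0$ together with $\nabla^c e=0$ then gives $\tilde\nabla^\perp B=0$, so $B$ is $\tilde\nabla^\perp$-parallel. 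I define $S:=\exp(\mathbb R B_p)\subset SO(\nu_pM)$; it is nontrivial because $M$ is not an $s$-orbit, so $\mathcal D^K\neq TM$ by Theorem~\ref{thm:OS}, whence $D^\perp$ vanishes on $\mathcal L^\perp$ (Lemma~\ref{nablas}) but not identically, forcing $B_p=D^\perp_e\neq0$.

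For the normalization I would use that the unique almost symmetry $\sigma_p$ commutes with the isotropy $K_p$, so $K_p$ preserves the line $\mathcal L_p$ and acts on it by a sign $\epsilon(k)=\pm1$. The $K$-invariance of $D^\perp$ then yields $\rho(k)B_p\rho(k)^{-1}=D^\perp_{dk(e)}=\epsilon(k)B_p$, so $\rho(K_p)$ normalizes $\mathbb R B_p$ and hence $S$; consequently $S\rho(K_p)=\rho(K_p)S$ is a subgroup of $SO(\nu_pM)$ and $S$ centralizes $\rho((K_p)^0)$. Combined with the inclusion $\rho(K_p)\subset\Phi(p)$ recorded after Theorem~\ref{thm:O1C}, this already gives the left-hand inclusion.

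The heart of the matter is $\Phi(p)\subset S\rho(K_p)$, which I would obtain by comparing the two normal transports along an arbitrary loop $c$ based at $p$. Writing a $\nabla^\perp$-parallel section as $\tilde\tau_t u(t)$ in the $\tilde\nabla^\perp$-trivialization along $c$, the relation $\nabla^\perp=\tilde\nabla^\perp+D^\perp$ becomes $\dot u=-\bigl(\tilde\tau_t^{-1}D^\perp_{\dot c(t)}\tilde\tau_t\bigr)u$. The crucial simplification is that, since $D^\perp$ is supported on $\mathcal L$ and $B$ is $\tilde\nabla^\perp$-parallel, one has $\tilde\tau_t^{-1}D^\perp_{\dot c(t)}\tilde\tau_t=\langle\dot c(t),e\rangle\,B_p$, a scalar multiple of the fixed skew-endomorphism $B_p$. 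The time-ordered exponential therefore collapses to an ordinary one, and integrating gives $\tau^\perp_c=\tilde\tau_c\circ\exp(-\Theta B_p)$ with $\Theta=\int_0^1\langle\dot c(t),e\rangle\,dt$. By the construction of the canonical connection, for the loop $c$ the transport $\tilde\tau_c$ equals $\rho(g)$ for some $g\in K_p$, so $\tau^\perp_c\in\rho(K_p)\cdot S=S\rho(K_p)$. Letting $c$ range over all loops proves $\Phi(p)\subset S\rho(K_p)$.

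I expect the main obstacle to be isolating and justifying this collapse of the ordered exponential, that is, the proof that $B=D^\perp_e$ is $\tilde\nabla^\perp$-parallel; this rests on the one-dimensionality of $\mathcal L$ (which is what makes $e$ itself $\nabla^c$-parallel) together with the parallelism of the $K$-invariant tensor $D^\perp$. A secondary, purely technical point is the possible non-orientability of $\mathcal L$ along $c$: choosing $e$ continuously on $[0,1]$ keeps the formula valid, and since $S=\exp(\mathbb R B_p)$ is insensitive to the sign of $B_p$ while $D^\perp_{\dot c}$ is sign-independent, the conclusion is unaffected.
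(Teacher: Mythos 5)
Your proof is correct, and it reaches the conclusion by a genuinely different route from the paper's. The paper handles an arbitrary loop $c$ by a geometric factorization: using that the shape operators preserve $\mathcal L$ (Lemma~\ref{l}) and the Ricci equation, it gets $R^\perp(\mathcal L,\mathcal L^\perp)=0$ and invokes the factorization lemma from the appendix of~\cite{O1} to write $\tau^\perp_c=\tau^\perp_\beta\circ\tau^\perp_{\tilde c}$, where $\tilde c$ is tangent to $\mathcal L^\perp$ (so $\tau^\perp_{\tilde c}=dk|_{\nu_pM}$ by Lemma~\ref{nablas} and Theorem~\ref{thm:O1C}) and $\beta$ runs in the one-dimensional leaf $L_p$, parametrized by the extra one-parameter subgroup coming from $\Lk_{L_p}=\Lk_p\oplus\R u$; the generator of $S$ is then the skew operator $\mathcal A_u$ induced by $u$ on $\nu_pM$. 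You instead integrate the first-order ODE comparing $\nabla^\perp$ with $\tilde\nabla^\perp$ directly along the loop, and your argument hinges on the observation that $B=D^\perp_e$ is $\tilde\nabla^\perp$-parallel (because $D^\perp$ is $\nabla^c\oplus\tilde\nabla^\perp$-parallel and a unit section of the rank-one $\nabla^c$-parallel subbundle $\mathcal L$ is $\nabla^c$-parallel along any curve), so that together with $D^\perp|_{\mathcal L^\perp}=0$ (Lemma~\ref{nablas}) the path-ordered exponential collapses to $\exp(-\Theta B_p)$; your generator of $S$ is the tensorial object $D^\perp_{e_p}$ rather than the group-theoretic $\mathcal A_u$. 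What your version buys is independence from the appendix lemma of~\cite{O1} and from the Ricci-equation step, making the argument more self-contained; what the paper's version buys is an explicit identification of $S$ with the action on $\nu_pM$ of the one-parameter subgroup of $K$ moving along the leaf $L_p$. One shared caveat: what you actually verify is that $\rho(K_p)$ normalizes $S$, with $S$ centralizing $\rho((K_p)^0)$, which is exactly what is needed for $S\cdot\rho(K_p)$ to be a group and for the codimension-one input to Theorem~\ref{thm:cod1}; the paper's construction yields the same (via $\mathrm{Ad}_k u=\pm u$ for $k\in K_p$), so neither argument literally shows that $S$ normalizes the possibly disconnected group $\rho(K_p)$, but this does not affect the application.
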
  

\Pf Denote by $L_q$ the leaf of $\mathcal L$ through~$q\in M$. 
Since $\mathcal L$ is $K$-invariant, the stabilizer
$K_{L_p}$ acts transitively on $L_p$, so we can write 
$L_p=K_{L_p}/K_p$ as a homogeneous space. In particular
$\dim K_{L_p}=\dim K_p+1$. Owing to the fact that $K_{L_p}$ is a 
compact Lie group, for the Lie algebras this yields
that $\Lk_{L_p}=\Lk_p \oplus \R u$, direct sum of ideals, for some 
$u\in\Lk_{L_p}$.

Let $c:[0,1]\to M$ a smooth loop at $p$. 
By Lemma~\ref{l}, the shape operators of $M$ preserve the 
distribution~$\mathcal L$. Due to the Ricci equation,
$R^\perp(\mathcal L,\mathcal L^\perp)=0$ for the curvature of the normal
connection. Now~\cite[Lem., App.]{O1} can be applied to yield
a factorization of the parallel transport in $\nu M$ along $c$,
\begin{equation}\label{factorization}
  \tau^\perp_c= \tau_\beta^\perp \circ \tau_{\tilde c}^\perp,
  \end{equation}
where $\tilde c$ is a smooth curve defined on $[0,1]$,
everywhere tangent to $\mathcal L^\perp$, originating at~$p$
and ending at a point $q$ in $L_p$, 
and $\beta:[0,1]\to L_p$ is a smooth curve joining $q$ to~$p$. 
Since $L(v)$ is one-dimensional, 
we may assume that $\beta(t)=\exp(-t\lambda u)\cdot q$,
where $\lambda\in\R$ is such that $\exp(-\lambda u)\cdot q=p$. 
The fact that $\tilde c$ is everywhere tangent 
to $\mathcal L^\perp$ implies that $\tau^\perp_{\tilde c}$
coincides with the parallel transport $\tilde\tau^\perp_{\tilde c}$ 
with respect to the 
canonical connection $\tilde\nabla^\perp$, 
and thus $\tau^\perp_{\tilde c}=dk|_{\nu_pM}$ for some $k\in K^0$,
due to Theorem~\ref{thm:O1C}. Set $k'= \exp(-\lambda u)k\in K_p$. 

The action of $u\in \Lk_{L_p}$ on $\xi\in\nu_pM$ given by
\[ \mathcal A_u\xi = \frac{d}{dt}\Big|_{t=0}d(\exp tu)_p\cdot \xi \]
defines a skew-symmetric map on $\nu_pM$ such that
\[ d\exp(-\lambda u)\circ (\tau_\beta^\perp)^{-1} = e^{-\lambda\mathcal A_u} \]
(cf.~\cite[\S~5.2.2]{BCO}).
We substitute into~(\ref{factorization}) to get
\[ \tau_c^\perp = e^{\lambda\mathcal A_u}\circ dk'|_{\nu_pM}. \]
This equation shows that we can take $S$ to be the one-parameter subgroup of $SO(\nu_pM)$
generated by~$\mathcal A_u\in\mathfrak{so}(\nu_pM)$. \EPf

\medskip

Now part~(a) of Theorem~\ref{thm:struct} follows from Theorem~\ref{thm:cod1}. 

\subsection{Part (b)}

Next we assume that $M$ an orbit of an s-representation, but not a most
singular orbit, and we wish to prove that it must be an almost singular orbit.

Herein we work only at the point~$p$. Fix an almost symmetry $\sigma$
of $M$ at~$p$ and denote the subspace of fixed points of $d\sigma_p$
in $T_pM$ by~$L(\sigma_p)$. Since $d\sigma_p$ is the identity on $\nu_pM$,
it commutes with any shape operator at~$p$ and therefore any shape
operator at~$p$ preserves
$L(\sigma_p)$. The restricted normal holonomy $\Phi^*(p)$ coincides with
$\rho((K_p)^0)$, since $M$ is an s-orbit, and the dimension of the
subspace $(\nu_0M)_p$ of fixed points in $\nu_pM$ is the rank~$r$ of $M$.
Note that $r\geq2$ as we are assuming that $M$ is not most singular. 

The family of shape operators $\{A_\xi:\xi\in(\nu_0M)\}$ is commutative,
due to the Ricci equation, and hence simultanelously diagonalizable.
Then there are $g$ parallel normal vector fields $\eta_1,\ldots,\eta_g$,
the so-called curvature normals, with corresponding autoparallel
distributions $E_1,\ldots,E_g$ and an eigenbundle decomposition $TM=E_1\oplus\cdots\oplus E_g$. Note that the curvature normals span $\nu_0M$, for otherwise
$M$ would reduce codimension; hence $g\geq r\geq2$.

We have that $L(\sigma_p)$ is contained in $E_{i_0}(p)$ for some $i_0$,
due to its invariance under the shape operators at~$p$. 
The isometry $\sigma$ maps parallel manifolds to $M$ to parallel
manifolds (including focal manifolds). Since it is the identity on $\nu_pM$
and this subspace meets all $K$-orbits, $\sigma$ preserves all $K$-orbits.
Consider the $K$-orbit $N$ through
$q=p+\xi$, where $\xi$ is a parallel section of $\nu_0M$ satisfying
$\langle\xi,\eta_i\rangle=1$ if and only if $i=i_0$. Then
$N$ is a focal manifold to $M$,
$\sigma(q)=q$ and $d\sigma_q|_{T_qN}$ is the restriction of
$d\sigma_p$ to $T_pM\cap E_{i_0}^\perp$, and hence is $-\mathrm{Id}$. This forces
$d\sigma_q$ to be the identity on the first normal space~$\nu_q^1N$.
Now $\nu^1N$ is $\nabla^\perp$-parallel, since parallel translation is
given by the group, and hence $\nu^1_qN=\nu_qN$, due to the fullness of $N$
(since $K$ acts irreducibly). This shows that $N$ is extrinsically symmetric.
Moreover $L(\sigma_p)=E_{i_0}(p)$ because $d\sigma_q$ does not have $(-1)$-eigenvalues
on $\nu_qN$. Therefore $\dim N=\dim M-1$. Finally, it is known that
extrinsically symmetric submanifolds are most singular orbits of
s-representations, hence $M$ is almost singular. This completes the 
proof of part~(b).

\subsection{Part (c)}

For an almost most singular orbit $M$
of an s-representation which is almost symmetric, let $N$ be
the focal manifold constructed in part~(b). 
Thanks to~\cite[Theorem~4.5.4]{BCO2}, $M$ is a holonomy tube over $N$,
that is, $M=(N)_{\bar\xi}$ for $\bar\xi=-\xi\in\nu_qN$ (notation
as in párt~(b)). 
Since $\dim M=\dim N+1$,
$\xi$ must lie in an irreducible $2$-dimensional component
of the normal holonomy representation of $N$ at~$q$.
Conversely, suppose now $N$ is an extrinsic symmetric submanifold
with symmetry at $q\in N$ given by~$\sigma$.  Take a sufficiently
short non-zero $\xi\in\nu_qN$
and consider the holonomy tube $M=(N)_\xi$. Then $\sigma$ preserves
$M$, fixes $p=q+\xi$, coincides with the identity along $\nu_pM$, and
its set of fixed vectors in $T_pM$ is precisely $T_\xi(\Phi(q)\cdot\xi)$.
We deduce that if $\xi$ lies in a $2$-dimensional irreducible
component of $\Phi^*(q)$ then $M$ is almost symmetric.

\subsection{Part (d)}

For the non-uniqueness statement of part~(d), see subsection~\ref{aso}.

If $M$ is not an orbit of an s-representation, then the uniqueness
result is contained in
Lemma~\ref{lem:uniq}. In view of part~(b) we may thus
assume that $M$ is a most singular
orbit of an s-representation. For each $q\in N$, denote
by $\mathcal F_q$ the family of almost symmetries at $q$. 
Suppose, to the contrary, that $\mathcal F_q$ contains 
at least two elements.
 
For each $\sigma\in\mathcal F_q$, denote by $L(\sigma)$ the one-dimensional
subspace of $T_qM$ of fixed points of $d\sigma_q$. Let also
$V(q)$ denote the linear span in $T_qM$
of $L(\sigma)$ for $\sigma\in\mathcal F_q$. We obtain an
invariant distribution $\mathcal V$ on $M$, which is
moreover invariant under the group $\tilde K$ of extrinsic
isometries of $M$. 

\begin{lem}
$V(q)$ is a proper subspace of $T_qM$ for all $q\in M$. 
\end{lem}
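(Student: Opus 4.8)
The plan is to extract from each almost symmetry a common eigenline of the whole family of shape operators, and then to show that if these lines were to span $T_qM$ the submanifold would split as a nontrivial extrinsic product, contradicting irreducibility.

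First I would record the commutation property already exploited in Part~(b): since $d\sigma_q$ is the identity on $\nu_qM$ for every $\sigma\in\mathcal F_q$, it satisfies $A_\xi\circ d\sigma_q=d\sigma_q\circ A_\xi$ for all $\xi\in\nu_qM$. Hence every shape operator preserves the $(+1)$-eigenline $L(\sigma)$ of $d\sigma_q$, so $L(\sigma)$ is a simultaneous eigenline of the entire family $\{A_\xi:\xi\in\nu_qM\}$. Equivalently, there is a vector $n_\sigma\in\nu_qM$ with $\alpha(v,w)=\langle v,w\rangle\,n_\sigma$ for all $v\in L(\sigma)$ and all $w\in T_qM$; in particular $\alpha(v,w)=0$ whenever $w\perp v$.

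Next I would use that a most singular orbit is an extrinsic symmetric submanifold, so its second fundamental form is parallel. Writing $\eta_1,\dots,\eta_g$ for its curvature normals and $E_1,\dots,E_g$ for the corresponding eigendistributions of the parallel shape operators, the $E_j$ are $\nabla$-parallel, mutually orthogonal, and $T_qM=\bigoplus_jE_j$. Since each $L(\sigma)$ is one-dimensional and is in particular a common eigenline of the parallel family, it lies in a single $E_{j(\sigma)}$; setting $V_j=\spann\{L(\sigma):L(\sigma)\subseteq E_j\}\subseteq E_j$ yields the orthogonal decomposition $V(q)=\bigoplus_jV_j$. Note $g\geq2$, for otherwise $M$ would be umbilical, hence a round sphere, which is excluded. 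Now suppose, for contradiction, that $V(q)=T_qM$. Comparing with $T_qM=\bigoplus_jE_j$ forces $V_j=E_j$ for every $j$. Fixing $j\neq k$ and taking $u\in E_j$, $w\in E_k$, I write $u$ as a combination of common eigenlines $v_a\in E_j$; since $w\perp v_a$, the relation of the previous paragraph gives $\alpha(u,w)=\sum_a c_a\langle v_a,w\rangle n_{v_a}=0$. Thus $\alpha(E_j,E_k)=0$ for all $j\neq k$, and since the $E_j$ are $\nabla$-parallel, Moore's lemma (see~\cite{BCO}) exhibits $M$ as an extrinsic product of the $g\geq2$ factors integrating the $E_j$, contradicting the irreducibility of $M$. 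Hence $V(q)\subsetneq T_qM$.

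The main obstacle is the passage, in the last step, from ``the common eigenlines fill each $E_j$'' to the genuine vanishing $\alpha(E_j,E_k)=0$ of the full normal-valued second fundamental form across distinct distributions: a priori, for a symmetric submanifold only the $\nu_0M$-component of $\alpha(E_j,E_k)$ is automatically zero, and it is precisely the eigenline identity $\alpha(v_a,\cdot)=\langle v_a,\cdot\rangle n_{v_a}$—which holds against \emph{every} tangent vector, not merely within $E_j$—that kills the remaining components. One must also take care that the parallelism of the curvature distributions of the symmetric submanifold is used correctly, so that Moore's splitting legitimately applies and produces the sought reducibility.
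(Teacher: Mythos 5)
Your first step is correct and is exactly how the paper begins: each $L(\sigma)$ is a one-dimensional subspace invariant under every shape operator, hence a common eigenline, and the identity $\alpha(v,w)=\langle v,w\rangle n_\sigma$ for $v\in L(\sigma)$ and arbitrary $w$ follows. But the rest of the argument rests on a false premise. You assert that a most singular orbit is an extrinsic symmetric submanifold; what is true (and what the paper quotes in part~(b)) is the \emph{converse}: extrinsically symmetric submanifolds are most singular orbits of s-representations. Not every most singular orbit is symmetric — symmetric R-spaces correspond to simple restricted roots with coefficient $1$ in the highest root, and, for instance, for $\mathfrak g_2$ (highest root $3\alpha_1+2\alpha_2$) neither most singular orbit of the adjoint representation is a symmetric submanifold. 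Moreover, even for a genuinely symmetric submanifold the curvature-normal decomposition $T_qM=\bigoplus_j E_j$ you invoke is only available for the commuting family $\{A_\xi:\xi\in\nu_0M\}$, where $\nu_0M$ is the flat part of the normal bundle; for a full irreducible most singular orbit one has $\nu_0M=\R p$, so $g=1$ and $E_1=T_qM$. Your claim that $g=1$ would force $M$ to be umbilical confuses ``$A_\xi$ is umbilic for the parallel directions $\xi\in\nu_0M$'' with ``$A_\xi$ is umbilic for all $\xi$''. With $g=1$ there are no two distinct factors $E_j$, $E_k$, so the Moore-splitting contradiction never gets started.

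The repair is much shorter and is what the paper does: if $V(q)=T_qM$, then $T_qM$ is spanned by common eigenvectors of the whole family $\{A_\xi:\xi\in\nu_qM\}$, so all shape operators at $q$ are simultaneously diagonalizable and in particular commute. By the Ricci equation this forces $R^\perp=0$ at $q$, hence (by homogeneity) a flat normal bundle. But a most singular orbit of an s-representation of rank at least two is a singular, non-principal orbit, whose slice representation (contained in the normal holonomy) is nontrivial, so its normal bundle is not flat — a contradiction. No splitting theorem, parallelism of distributions, or appeal to irreducibility is needed.
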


\Pf Since $L(\sigma)$ is invariant under the shape operator
$A_\xi$ for all $\xi\in\nu_qM$, all shape operators
at~$q$ diagonalize simultaneously on $V(q)$. However,
as a most singular orbit of an s-representation, $M$ nas non-flat normal
bundle, so $V(q)$ cannot coincide with $T_qM$. \EPf

\medskip

For $q\in M$, let $H(q)$
denote the closure of the group generated by even products
of elements of $\mathcal F_q$. By or assumption,
$H(q)$ is non-trivial. Let also $W(q)$ denote
the subspace of fixed points of $H(q)$ in $T_qM$.

\begin{lem}\label{w-perp-v}
$W(q)=V(q)^\perp$.   
\end{lem}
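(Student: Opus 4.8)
The plan is to make everything explicit on $T_qM$ and reduce the identity $W(q)=V(q)^\perp$ to an elementary fact about orthogonal reflections that share a common $(-1)$-eigenspace. The starting observation is a description of $d\sigma_q$: since each $\sigma\in\mathcal F_q$ is an involutive ambient isometry that is the identity on $\nu_qM$ and has one-dimensional fixed space $L(\sigma)$ in $T_qM$, its differential satisfies $d\sigma_q=2P_\sigma-\mathrm{Id}$ on $T_qM$, where $P_\sigma$ is the orthogonal projection of $T_qM$ onto the line $L(\sigma)$; equivalently $d\sigma_q$ fixes $L(\sigma)$ and acts as $-\mathrm{Id}$ on $L(\sigma)^\perp\cap T_qM$. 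Here I must be careful to restrict this normal form to $T_qM$, since $d\sigma_q$ is $+\mathrm{Id}$, not $-\mathrm{Id}$, on $\nu_qM$.

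First I would prove the easy inclusion $V(q)^\perp\subseteq W(q)$. If $w\in T_qM$ is orthogonal to every line $L(\sigma)$, then $P_\sigma w=0$, hence $d\sigma_q w=-w$ for all $\sigma\in\mathcal F_q$; consequently every even product of elements of $\mathcal F_q$ sends $w$ to $(-1)^{\mathrm{even}}w=w$. Because fixing $w$ is a closed condition, $w$ is fixed by the whole closure $H(q)$, so $w\in W(q)$.

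For the reverse inclusion $W(q)\subseteq V(q)^\perp$, I would take $w\in W(q)$, fix a base symmetry $\sigma_0\in\mathcal F_q$, and exploit involutivity. For each $\sigma\in\mathcal F_q$ the product $\sigma\sigma_0$ is an even product, so $d\sigma_q\,d\sigma_{0,q}\,w=w$; applying the involution $d\sigma_q$ to both sides gives $d\sigma_{0,q}w=d\sigma_q w$. Substituting $d\sigma_q=2P_\sigma-\mathrm{Id}$ turns this into $P_\sigma w=P_{\sigma_0}w=:w_0$ for every $\sigma$, so the projection of $w$ onto each fixed line is one and the same vector $w_0$. Since this subsection assumes $\mathcal F_q$ has at least two elements, there are two distinct lines $L(\sigma_0)\neq L(\sigma_1)$, whence $w_0\in L(\sigma_0)\cap L(\sigma_1)=\{0\}$. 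Therefore $P_\sigma w=0$, i.e. $w\perp L(\sigma)$, for all $\sigma$, which is exactly $w\in V(q)^\perp$. Combining the two inclusions finishes the proof.

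The only genuinely delicate point is the claim, used to force $w_0=0$, that two distinct almost symmetries at $q$ have distinct fixed lines. I expect this to come cleanly from the rigidity of Euclidean ambient isometries: such an isometry fixing $q$ is determined by its differential there, and if $L(\sigma)=L(\tau)$ then the reflection normal form on $T_qM$ together with $d\sigma_q=d\tau_q=\mathrm{Id}$ on $\nu_qM$ forces $d\sigma_q=d\tau_q$ on all of $T_q\mathbb R^N$, hence $\sigma=\tau$. Everything else reduces to the projection computation above, so I do not anticipate any serious obstacle.
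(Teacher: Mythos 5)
Your proof is correct and follows essentially the same route as the paper: the easy inclusion is identical, and your projection identity $P_\sigma w=P_{\sigma_0}w$ is just an algebraic repackaging of the paper's observation that $W(q)$ lies in the fixed point set $\bigl(L(\sigma)\oplus L(\sigma_0)\bigr)^\perp$ of the rotation $d(\sigma\sigma_0)_q$. The one thing you add is an explicit justification that distinct almost symmetries at $q$ have distinct fixed lines (via rigidity of ambient isometries), a point the paper uses implicitly.
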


\Pf If $v\in V(q)^\perp$ then $v\in L(\sigma)^\perp$ and thus~$d\sigma_q(v)=-v$ 
for all $\sigma\in\mathcal F_q$. In particular $v$ is fixed by~$H(q)$, which
checks the inclusion $V(q)^\perp\subset W(q)$. Let us prove the 
reverse inclusion. By our assumption, there exist two
different elements $\sigma$, $\sigma'$ in~$\mathcal F_q$.
The composition $\sigma\sigma'$ is a rotation in
$L(\sigma)\oplus L(\sigma')\subset V(q)$, whose fixed point set
$(L(\sigma)\oplus L(\sigma'))^\perp$ contains $W(q)$. Since these
$2$-planes generate $V(q)$ for such pairs $\sigma$, $\sigma'$,
this proves that $V(q)^\perp\supset W(q)$, as desired. \EPf

\begin{lem}\label{lem:aux2}
  The subspaces $V(q)$ for $q\in M$
  define a smooth distribution $\mathcal V$ on $M$. 
Further, $\mathcal V$ is contained in the relative
nullity distribution $\mathcal N$
of~$M$ as a submanifold of the sphere of radius $||p||$.
\end{lem}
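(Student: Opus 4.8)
The plan is to separate the two assertions: first that $\mathcal V$ is a well-defined smooth distribution, which is a pure homogeneity argument, and second that $\mathcal V\subseteq\mathcal N$, which splits into an easy ``off-diagonal'' part handled by the reflection principle and a hard ``diagonal'' part. For smoothness I would first check that $V(q)$ is invariant under the isotropy group $\tilde K_q$: if $g\in\tilde K_q$ and $\sigma\in\mathcal F_q$, then $g\sigma g^{-1}$ is again involutive, fixes $q$, restricts to the identity on $\nu_qM$ (since $dg_q$ preserves $\nu_qM$ and $d\sigma_q=\mathrm{Id}$ there), and has a one-dimensional fixed set in $T_qM$, so it lies in $\mathcal F_q$. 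Hence $\tilde K_q$ permutes the lines $L(\sigma)$ and preserves their span $V(q)$. Since $\tilde K\supseteq K$ acts transitively on $M$ and $V(gq)=dg_q(V(q))$, the dimension of $V(q)$ is constant and $\mathcal V$ is the smooth $\tilde K$-invariant distribution obtained by translating $V(p)$ over $M$.

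For the relative nullity claim I would first record that each $\sigma\in\mathcal F_q$ fixes the origin: as $M$ lies in the sphere $S$ of radius $\|p\|$ centered at $0$, the inward radial vector $-q$ lies in $\nu_qM$, so $0=q+(-q)$ belongs to the affine normal space $q+\nu_qM$ that $\sigma$ fixes pointwise. Thus $\sigma$ is a linear isometry preserving $S$, preserving $M$, and restricting to the identity on $\nu^S_qM=\nu_qM\cap T_qS$. Invariance of the second fundamental form $\alpha^S$ of $M\subset S$ then gives, for $v\in L(\sigma)$ and any $w$ in the $(-1)$-eigenspace $L(\sigma)^\perp\cap T_qM$ of $d\sigma_q$,
\[ \alpha^S(v,w)=\alpha^S(d\sigma_q v,d\sigma_q w)=\alpha^S(v,-w)=-\alpha^S(v,w), \]
whence $\alpha^S(v,w)=0$. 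Consequently $\alpha^S(v,\cdot)$ is supported on $\mathbb R v$, so $v$ lies in $\mathcal N_q$ \emph{if and only if} the single value $\alpha^S(v,v)$ vanishes; in particular this already yields $\alpha^S(V(q),W(q))=0$ since $W(q)=V(q)^\perp$ by Lemma~\ref{w-perp-v}.

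It remains to show $\alpha^S(v,v)=0$, and here I would bring in a second almost symmetry $\sigma'\neq\sigma$. The lines $L(\sigma)$ and $L(\sigma')$ span a $2$-plane $P$ preserved by both reflections, on which the even product $\sigma\sigma'\in H(q)$ acts as a rotation; because $\sigma\sigma'$ fixes $\nu^S_qM$ pointwise, $\alpha^S|_P$ is invariant under this rotation and (whenever the rotation angle is not $0$ or $\pi$) is forced to be umbilical, $\alpha^S(x,y)=\langle x,y\rangle\,\zeta$ on $P$ for a single $\zeta\in\nu^S_qM$. Running this over the $H(q)$-orbit of lines and using the off-diagonal vanishing above, the whole problem reduces to the single statement that $V(q)$ is a common eigenspace of every shape operator $A^S_\eta$ with eigenvalue $\langle\zeta,\eta\rangle$, i.e. $V(q)$ is a curvature distribution with spherical curvature normal $\zeta$, and the lemma is equivalent to $\zeta=0$.

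The vanishing of $\zeta$ is where I expect the real difficulty to lie, since it cannot be extracted from the symmetry group alone: an umbilical $V(q)$ is compatible with any $H(q)$-action, and in the degenerate case where all the lines $L(\sigma)$ happen to be mutually orthogonal the rotation argument degenerates to $-\mathrm{Id}$ and gives no umbilicity relation at all. To kill $\zeta$ I would use that $M$ is a \emph{most singular} orbit of an $s$-representation: such orbits are minimal submanifolds of $S$, and their shape operators are described Lie-theoretically by the restricted root system, so their spherical curvature normals are explicitly computable. The plan is to identify $V(q)$ with a sum of restricted root spaces whose curvature normal is purely radial — forced by the fact that $V(q)$ consists of fixed directions of the canonical involutions — and to invoke minimality to exclude a nonzero umbilical summand, giving $\zeta=0$. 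This structural identification, rather than the routine smoothness and reflection arguments, is the crux of the proof.
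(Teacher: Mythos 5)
Your smoothness argument and the off-diagonal vanishing $\alpha^S(L(\sigma),L(\sigma)^\perp)=0$ are correct and match the paper, which phrases the latter by saying that each $L(\sigma)$ is a common eigenspace of all shape operators, so the shape operators diagonalize simultaneously along $\mathcal V$. The problem is the step you yourself flag as the crux: you never prove that the curvature normal(s) of $\mathcal V$ are radial, and the plan you sketch for it would not work as stated. Minimality of $M$ in the sphere is the trace condition $\sum_i m_i\langle\eta_i,\xi\rangle=0$ summed over \emph{all} curvature distributions of $M$, so it cannot isolate and kill the curvature normal of the particular subdistribution $\mathcal V$; and ``identify $V(q)$ with a sum of restricted root spaces whose curvature normal is purely radial'' is essentially a restatement of what has to be proved. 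Moreover, your reduction to a single common normal $\zeta$ already breaks down in the degenerate case you mention (mutually orthogonal lines $L(\sigma)$), so even the set-up of your final step is incomplete: a priori different lines $L(\sigma)$ carry different curvature normals.

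The paper closes this gap with a short argument of a different flavor. The simultaneous diagonalization gives a $\tilde K$-invariant eigenbundle decomposition $\mathcal V=E_1\oplus\cdots\oplus E_g$ with $A_\xi|_{E_i}=\langle\xi,\eta_i\rangle\,\mathrm{id}_{E_i}$, and the curvature normals $\eta_i$ are $\tilde K$-invariant normal vector fields because the whole family $\mathcal F_q$ is $\tilde K$-equivariant. On an orbit of an s-representation the normal parallel transport is realized by group elements (Theorem~\ref{thm:O1C}), so $\tilde K^0$-invariant normal fields are $\nabla^\perp$-parallel. Since $M$ is a \emph{most singular} orbit its rank is one, i.e.\ the only parallel normal fields are multiples of the position vector; hence $g=1$ and $\eta_1$ is radial, so $A_\xi|_{\mathcal V}=0$ for every $\xi$ tangent to the sphere. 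This chain (invariance implies parallelism implies radiality, via rank one) is the missing idea; without it, or a genuine substitute, your argument does not establish $\mathcal V\subset\mathcal N$.
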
 

\Pf It is obvious that $k\mathcal F_qk^{-1}=\mathcal F_q$
for all $k\in \tilde K_q$. It follows that $H(q)$
is a normal subgroup of $\tilde K_q$ for all $q\in M$.
Therefore the subspaces $W(q)$ give rise to a $\tilde K$-invariant
autoparallel distribution $\mathcal W$ of $M$. The first statement
now follows from $\mathcal V=\mathcal W^\perp$ (Lemma~\ref{w-perp-v}).

Recall that the shape operators of $M$ are simultaneously
diagonalized along $\mathcal V$. This means there exist $\tilde K$-invariant
normal vector fields $\eta_1,\ldots,\eta_g$ on $M$ and a a common
eigenbundle decomposition $\mathcal V=E_1\oplus\cdots\oplus E_g$ such that
$A_\xi|_{E_i}=\langle \xi,\eta_i\rangle\,\mathrm{id}_{E_i}$.
Since $\tilde K^0$ acts as an s-representation, $\tilde K^0$-invariant
normal vector fields must be parallel. This implies that the $\eta_i$
are parallel. However $M$ is a most singular orbit, so $g=1$ and $\eta_1$
is the position vector. Now $A_\xi|_{\mathcal V}=0$ for $\xi$ tangent
to the sphere that contains $M$, that is, $\mathcal V\subset\mathcal N$. \EPf

\medskip

Now we can finish the proof of part~(d). By Lemma \ref{lem:aux2},
the nullity $\mathcal N$ of the second fundamental form of $M$ as a submanifold of the sphere is non-trivial. Since $M$ is homogeneous,
$\mathcal N$ has constant rank. By a remarkable result of
F. Vittone~\cite[Theorem 1]{Vi}, the distribution $\mathcal N^\perp$
is completely non-integrable, that is, any two points in $M$ can be joined
by a piecewise smooth curve tangent to $\mathcal N^\perp$.
On the other hand, the distribution $\mathcal N^\perp$ is contained in the integrable (autoparallel) distribution $\mathcal W =\mathcal V^\perp \subsetneq TM$.
This contradiction shows that there can be
only one almost symmetry at any point of $M$. \EPf

\section{Reduction to automorphisms}\label{sec:reduct}

We now initiate the proof of Theorem~\ref{thm:classif}, namely
the classification of almost symmetric
homogeneous submanifolds of Euclidean space. In this section we
treat the case of orbits of s-representations that are almost symmetric
and relate the almost symmetry to an automorphism of the group
of isometries. 

\begin{lem}\label{1}
  Suppose $X=G/K$ is a simply-connected symmetric space of
  compact type and rank bigger than one, where $G$ is the transvection group.
  Write $\Lg=\Lk+\Lp$ as usual and the denote the basepoint by~$o$. 
  If $\Phi:\Lp\to\Lp$ is an isometry that preserves the orbit
  $M=Ka_0$ for some $a_0\in\Lp$, then $\Phi=df_o$ for a unique
isometry $f$ of $X$ with $f(o)=o$. 
\end{lem}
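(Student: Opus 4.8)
The plan is to reduce the statement to the single assertion that $\Phi$ preserves the curvature tensor $R$ of $X$ at the basepoint~$o$, where we identify $T_oX\cong\Lp$ and recall $R(x,y)z=-[[x,y],z]$ for $x,y,z\in\Lp$. Once this is established, a classical rigidity theorem (see e.g.~\cite{BCO}) produces the desired isometry: since $R$ is parallel, a linear isometry of $T_oX$ preserving $R$ also preserves all its covariant derivatives and hence extends over the complete simply-connected locally symmetric space $X$ to an isometry $f$ with $f(o)=o$ and $df_o=\Phi$. Uniqueness of $f$ is automatic, because an isometry of a connected Riemannian manifold is determined by its value and differential at one point. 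Thus everything reduces to proving $\Phi\cdot R=R$, and I would first treat the case in which $X$ is irreducible.

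The first step is to observe that $\Phi$ normalizes the isotropy group. Since $\Phi$ preserves $M=Ka_0$, conjugation by $\Phi$ preserves the group $G_M=\{g\in O(\Lp):gM=M\}$ and therefore its identity component $G_M^0$. Because $a_0\neq0$ and the rank exceeds one, $M$ is a full irreducible orbit of the $s$-representation which is not a round sphere, so Remark~\ref{rem:Simons} applies and gives $G_M^0=\Ad(K)|_{\Lp}$. Hence $\Phi\,\Ad(K)|_{\Lp}\,\Phi^{-1}=\Ad(K)|_{\Lp}$; passing to Lie algebras, $\Phi$ normalizes the subalgebra $\ad(\Lk)|_{\Lp}\subset\mathfrak{so}(\Lp)$, which is isomorphic to $\Lk$ since the $s$-representation is faithful.

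Next I would transport the curvature tensor. Set $R'=\Phi\cdot R$, defined by $R'(x,y)z=\Phi\,R(\Phi^{-1}x,\Phi^{-1}y)\,\Phi^{-1}z$. A direct check using the previous paragraph shows that $R'$ is again an algebraic curvature tensor, that it is $\Ad(K)$-invariant, and that its values $R'(x,y)$ lie in $\Phi\,\ad(\Lk)\,\Phi^{-1}=\ad(\Lk)$, so its holonomy algebra is contained in $\Lk$. Now I invoke Simons' holonomy theorem~\cite{Si}: as $\Ad(K)|_{\Lp}$ acts irreducibly on $\Lp$ and, the rank being bigger than one, is not transitive on the unit sphere, the symmetric holonomy system with this group has its curvature tensor determined up to a real scalar, so $R'=\lambda R$ for some $\lambda\in\R$. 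Comparing Ricci tensors and using that $\Phi$ is a linear isometry — whence $\mathrm{Ric}_{R'}=\mathrm{Ric}_R(\Phi^{-1}\cdot,\Phi^{-1}\cdot)=\mathrm{Ric}_R$, while $\mathrm{Ric}_R=c\langle\cdot,\cdot\rangle$ with $c>0$ because an irreducible symmetric space of compact type is Einstein — forces $\lambda=1$. Therefore $\Phi\cdot R=R$, as needed.

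I expect the main obstacle to be the Simons step, namely the uniqueness of the curvature tensor: one must argue that for the fixed irreducible non-transitive isotropy group the $\Ad(K)$-invariant algebraic curvature tensors with holonomy contained in $\Lk$ form a one-dimensional space (the constant-curvature tensor is excluded precisely because its holonomy is all of $\mathfrak{so}(\Lp)$, which is not contained in $\Lk$ once the rank exceeds one). A secondary, routine point is the passage from the irreducible to the general case: if $X$ is reducible, then $\Phi$, normalizing $\Ad(K)|_{\Lp}$, permutes the irreducible factors of $\Lp$ compatibly with the corresponding factors of $K$, and one applies the argument above to each factor together with the induced permutation.
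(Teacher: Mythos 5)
Your proposal is correct and follows essentially the same route as the paper's proof: establish that $\Phi$ normalizes $\mathrm{Ad}(K)|_{\mathfrak p}$, deduce via the uniqueness of the curvature tensor of an irreducible non-transitive (symmetric) holonomy system that $\Phi\cdot R=\lambda R$, fix $\lambda=1$ by a trace/scalar-curvature comparison, and conclude with Cartan--Ambrose. The only cosmetic difference is that you obtain the normalization directly from Remark~\ref{rem:Simons} (maximality of $K$ as the preserver of $M$), whereas the paper argues that $\Phi$ permutes the holonomy tubes, i.e.\ the $K$-orbits, and then invokes a maximality result from~\cite{BCO2}.
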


\Pf We know $K$ is connected.
$\Phi$ maps holonomy tubes of $M$ onto 
holonomy tubes of $M$, and these are the $K$-orbits in~$\Lp$.
It follows that 
$K$ and $\Phi K \Phi^{-1}$ have the same orbits in $\Lp$.
We deduce from this and from~\cite[Remark~8.3.5]{BCO2} (or~\cite[Proposition~4.3.9]{BCO})
that $\Phi$ normalizes $K$. We deduce from~\cite[Lemma~8.3.2]{BCO2}
that $\Phi(R)$ and $R$ are multiples, but they have the same scalar
curvature (since the curvature operators are conjugate, and the scalar
curvature is twice the trace of the curvature operator). It follows
that $\Phi$ preserves $R$. By the Cartan-Ambrose theorem,
$\Phi$ is induced by a global isometry of $X$. Uniqueness is clear since
$f$ is determined by $df_o$. \EPf

\begin{lem}\label{2}
   Suppose $X$ is as in Lemma~\ref{1}.   
   If $\Phi:\Lp\to\Lp$ is an isometry (resp.~an involutive isometry) 
     that preserves
 $M=Ka_0$ for some $a_0\in\Lp$,
 then $\Phi=\tau|_{\mathfrak p}$
 for a unique (resp.~involutive) automorphism of $\tau$ of $\Lg$
that commutes with $\sigma$, that is,
$\tau\in\mathrm{Aut}(\Lg)^\sigma$.
\end{lem}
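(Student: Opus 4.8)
The plan is to promote the isometry furnished by Lemma~\ref{1} to a Lie-algebra automorphism by conjugation, and then to verify the listed properties one at a time. First I would invoke Lemma~\ref{1} to obtain the unique isometry $f$ of $X$ with $f(o)=o$ and $df_o=\Phi$. Since $G$ is the transvection group, it is a normal (indeed characteristic) subgroup of the full isometry group of $X$, so conjugation $c_f\colon g\mapsto fgf^{-1}$ is a well-defined automorphism of $G$; as $\Lg$ is semisimple (compact type, no Euclidean factor), I set $\tau:=d(c_f)_e\in\Aut(\Lg)$. The whole content of the lemma is then to check that this $\tau$ restricts to $\Phi$ on $\Lp$, commutes with $\sigma$, is unique, and inherits involutivity.

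The key structural point to establish first is that $\tau$ commutes with $\sigma$, and I would deduce this from the geodesic symmetry $s_o$ at~$o$. Recall that $s_o$ is the unique isometry fixing $o$ with differential $-\mathrm{id}$ at $o$; since $df_o\cdot(-\mathrm{id})\cdot(df_o)^{-1}=-\mathrm{id}$, uniqueness forces $f s_o f^{-1}=s_o$, so $c_f$ commutes with $c_{s_o}$, whose differential is exactly the Cartan involution $\sigma$ on $\Lg$. Hence $\tau\sigma=\sigma\tau$, which means $\tau$ preserves both eigenspaces $\Lk$ and $\Lp$, i.e.\ $\tau\in\Aut(\Lg)^\sigma$. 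Only after knowing $\tau(\Lp)=\Lp$ can I identify $\tau|_\Lp$ with $\Phi$: for $a\in\Lp$ the automorphism $c_f$ intertwines exponentials, $f\exp(ta)f^{-1}=\exp(t\,\tau(a))$, and applying both sides to $o$ yields on the left the geodesic $t\mapsto f(\exp(ta)\cdot o)$ through $o$ with initial velocity $df_o(a)=\Phi(a)$, and on the right the curve $t\mapsto\exp(t\,\tau(a))\cdot o$ with initial velocity the $\Lp$-component of $\tau(a)$. Since $\tau$ already preserves $\Lp$, this component is $\tau(a)$ itself, so $\tau(a)=\Phi(a)$.

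It remains to treat uniqueness and involutivity, both of which rest on the fact that $G$ is \emph{exactly} the transvection group, so that $\Lg=\Lp\oplus[\Lp,\Lp]$ with $\Lk=[\Lp,\Lp]$. Uniqueness is then immediate, since any $\tau\in\Aut(\Lg)^\sigma$ with $\tau|_\Lp=\Phi$ is forced on $\Lk$ by $\tau([a,b])=[\Phi a,\Phi b]$ for $a,b\in\Lp$. For the involutive case, if $\Phi^2=\mathrm{id}$ then $\tau^2\in\Aut(\Lg)^\sigma$ satisfies $\tau^2|_\Lp=\Phi^2=\mathrm{id}$, whence $\tau^2([a,b])=[a,b]$ for all $a,b\in\Lp$, and as such brackets span $\Lk$ we conclude $\tau^2=\mathrm{id}$. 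I expect the main obstacle to be bookkeeping rather than depth: one must be careful to establish $\tau\sigma=\sigma\tau$ \emph{before} matching velocities (otherwise the $\Lp$-projection in the velocity computation only recovers $\Phi$ up to a possible $\Lk$-component), and one must invoke that $G$ is the transvection group to license $\Lk=[\Lp,\Lp]$, which is what makes both uniqueness and the propagation of involutivity from $\Lp$ to all of $\Lg$ work.
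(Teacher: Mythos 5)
Your proof is correct and follows essentially the same route as the paper: both first invoke Lemma~\ref{1} to produce the isometry $f$ of $X$ with $f(o)=o$ and $df_o=\Phi$, and then pass from $f$ to a unique $\tau\in\Aut(\Lg)^\sigma$ with $\tau|_{\mathfrak p}=\Phi$. The only difference is that the paper outsources this second step to a citation (\cite[Proposition~4.1(a), \S4, Ch.~7]{Loos2}), whereas you prove it directly --- conjugation by $f$ on the transvection group, commutation with the geodesic symmetry $s_o$ to get $\tau\sigma=\sigma\tau$ before matching velocities, and $\Lk=[\Lp,\Lp]$ for uniqueness and involutivity --- which is a correct and complete unpacking of the cited result.
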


\Pf Due to Lemma~\ref{1}, we can find an isometry $f$ of $X$ such that
$\Phi=df_o$. In view of~\cite[Proposition~4.1(a), \S4, Ch.~7]{Loos2},
we can write $df_o=\tau|_{\mathfrak p}$
for a unique~$\tau\in\mathrm{Aut}(\Lg)^\sigma$. \EPf

\medskip

Fix a Cartan subspace (that is, a maximal Abelian subspace)~$\La$ of~$\Lp$.
Then we have the (restricted) root space decomposition
of $\Lg$ with respect to $\La$:
\[ \Lg = \Lg_0 + \sum_{\alpha\in\Delta}\Lg_\alpha, \]
where $\Lg_0=\Lk_0+\La$, $\Lk_0$ is the centralizer
$Z_{\mathfrak k}(\La)$ of $\La$ in $\Lk$, and $\Delta$
denotes the root system.

Extend~$\La$ to a $\sigma$-invariant
Cartan subalgebra $\Lh=\Lt+\La$, where $\Lt$ is a CSA of 
$\Lk_0$. We consider the root system of the complexification
$\Lg^c$ with respect to $\Lh^c$; these roots are real-valued
on $\sqrt{-1}\Lt+\La$. Then the elements of $\Delta$ can be seen as
restrictions along $\La$ of roots of $\Lg^c$ with respect to $\Lh^c$. 
If we introduce an ordering of the roots
that takes $\La$ before $\sqrt{-1}\Lt$, as we now do, 
the restriction map $\Lh^c\to\La$ maps positive roots to positive roots,
and simple roots to simple roots. 
Write as usual 
\[ \Lk = \Lk_0+ \sum_{\alpha\in\Delta^+} \Lk_\alpha\quad\mbox{and}\quad 
\Lp = \La+ \sum_{\alpha\in\Delta^+}\Lp_\alpha, \]
with 
\[ \Lk_\alpha+\Lp_\alpha = \Lg_\alpha + \Lg_{-\alpha} \]
for $\alpha\in \Delta^+$.

Recall that the Satake diagram of the symmetric space
associated to a symmetric pair $(G,K)$ is obtained from the
Dynkin diagram of $\Lg$ by painting black the nodes
associated to simple roots of $(\Lg^c,\Lh^c)$ that vanish along $\La$;
and by joining by a curved arrow
two white nodes which are associated to simpĺe roots of $(\Lg^c,\Lh^c)$
that have the same restriction along $\La$. The Satake diagram compĺetely
determines the symmetric space, up to duality.  

Also recall that the group of outer automorphisms of a compact
semisimple Lie algebra $\Lg$ is
canonically isomorphic to the group of automorphisms
of its Dynkin diagram. In fact, every automorphism of  $\Lg$
can be assumed to fix a given CSA and a given Weyl chamber, up to
inner automorphism, and then it permutes the simple roots.

\begin{lem}\label{0}
  Let $\tau\in\mathrm{Aut}(\Lg)^\sigma$ of order~$2$
  be such that $\tau|_{\mathfrak a}=\mathrm{id}$.
  Then $\tau$ induces
  an automorphism of order~$2$ of the Satake diagram that
  maps each white node either to itself or to another
  white node joined to itself by a curved arrow.
\end{lem}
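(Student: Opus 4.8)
The plan is to bring $\tau$, by inner automorphisms that fix $\La$ pointwise, into a form that preserves the Cartan subalgebra $\Lh$ and the chosen positive system; it then permutes the simple roots, and since $\tau$ fixes every restriction to $\La$, the resulting diagram automorphism is forced to respect the decorations of the Satake diagram. First I would record the consequences of the hypotheses: since $\tau$ commutes with $\sigma$ it preserves $\Lk$ and $\Lp$, and since $\tau|_\La=\mathrm{id}$ it commutes with $\ad_a$ for all $a\in\La$. Hence $\tau$ preserves $\Lk_0=Z_\Lk(\La)$, preserves each restricted root space, and fixes each restricted root; equivalently, for every root $\gamma$ of $(\Lg^c,\Lh^c)$ one has $(\tau\cdot\gamma)|_\La=\gamma|_\La$, so $\tau$ fixes all restrictions to $\La$.

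Next I would reduce $\tau$ to a standard representative. Since $\tau(\Lt)$ is again a Cartan subalgebra of the compact algebra $\Lk_0$, after composing with $\Ad_{k_1}$ for a suitable $k_1\in Z_K(\La)^0$ I may assume $\tau$ preserves $\Lt$, and hence $\Lh=\Lt+\La$. Composing further with $\Ad_{k_2}$ for $k_2\in Z_K(\La)^0$ realizing a suitable element of the Weyl group $W(\Lk_0,\Lt)$, I may assume in addition that $\tau$ preserves the positive system among the roots vanishing on $\La$. Every element of $Z_K(\La)$ fixes $\La$ pointwise, so these adjustments do not disturb the first paragraph; moreover, in the ordering that takes $\La$ before $\sqrt{-1}\Lt$, positivity of a root with nonzero restriction is decided by its restriction to $\La$, which $\tau$ and both $\Ad_{k_i}$ preserve. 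Therefore the adjusted $\tau$ preserves the full positive system and permutes the simple roots; this permutation is the induced diagram automorphism $\bar\tau$. As the adjustments are inner, $\bar\tau$ is the image of the class of $\tau$ under $\mathrm{Out}(\Lg)\cong\mathrm{Aut}(\text{Dynkin diagram})$, whence $\bar\tau^2=\overline{\tau^2}=\mathrm{id}$.

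With $\bar\tau$ in hand, the Satake statements follow from the fact that $\bar\tau$ preserves restrictions to $\La$. A black node is a simple root $\beta$ with $\beta|_\La=0$, and then $\bar\tau(\beta)|_\La=0$, so black nodes map to black nodes. If $\beta$ is white, then $\bar\tau(\beta)$ is a simple root with the same nonzero restriction, hence again white; since the white simple roots sharing the restriction of $\beta$ are exactly $\beta$ together with its curved-arrow partner (if one exists)---the standard property that the restriction map is at most two-to-one on white nodes---I conclude that $\bar\tau(\beta)$ is either $\beta$ or the white node joined to $\beta$ by a curved arrow.

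The step I expect to be the main obstacle is the reduction in the second paragraph: one must guarantee that $\tau$ can be put into $\Lh$- and chamber-preserving form using only inner automorphisms centralizing $\La$, since otherwise the restriction bookkeeping driving the argument would be lost. This is exactly ensured by the two facts that all Cartan subalgebras of $\Lk_0$ are conjugate under $Z_K(\La)^0$ and that the Weyl group of the subsystem of roots vanishing on $\La$ is realized inside $Z_K(\La)^0$; both groups centralize $\La$, and being contained in $K^0$ they also lie in $\mathrm{Aut}(\Lg)^\sigma$.
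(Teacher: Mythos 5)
Your proposal is correct and follows essentially the same route as the paper's proof: conjugate $\tau$ by an inner automorphism coming from the centralizer $K_0$ of $\La$ so that it preserves $\Lt$ and the chosen Weyl chamber, observe that it then permutes the simple roots while fixing all restrictions to $\La$, and read off the compatibility with the Satake decorations. You merely spell out details the paper leaves implicit (why positivity of roots with nonzero restriction is automatic, and why the induced diagram automorphism has order two).
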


\Pf Since  $\tau|_{\mathfrak a}=\mathrm{id}$, we have
$\tau(\Lk_0)=\Lk_0$. Now $\tau(\Lt)$ is a CSA of $\Lk_0$,
so by the conjugacy of CSA there is an inner automorphism 
of $\Lk_0$ mapping $\tau(\Lt)$ to~$\Lt$, say $\Lt=\Ad_k\tau(\Lt)$
for some $k\in K_0$, where $K_0$ is the centralizer of $\La$ in $K$. 
Further, by conjugacy of Weyl chambers, we may assume that 
$\tilde\tau:=\Ad_k\tau$ fixes the chosen Weyl chamber of $\Lt$. 

Now $\tilde\tau\in\mathrm{Aut}(\Lg)^\sigma$,
$\tilde\tau$ preserves the CSA $\Lh$ and its Weyl chamber,
so it maps simple roots to simple roots. 
Also, $\tilde\tau|_{\mathfrak a}=\mathrm{id}$, so if $\alpha$
is a restricted root, then $\tilde\tau\alpha=\alpha\circ\tilde\tau$ and
$\alpha$ have the same restriction
along $\La$. The desired result follows.
\EPf

\medskip

We shall use Lemma~\ref{0} as follows. Many Satake diagrams do not
admit a non-trivial automorphim that fixes every white node or maps it
to another white node joined to the first by a curved arrow. For
the corresponding symmetric space, this means that $\Phi$ must be given
as the inner automorphism induced by an element of $G$. 

\section{The classification of almost symmetric s-orbits}

In this section we run through the cases of simply-connected irreducible
symmetric spaces of compact type and use the results of
section~\ref{sec:reduct} to determine the orbits of s-representations
that are almost symmetric. 

Denote the simple restricted roots by $\alpha_1,\ldots, \alpha_q$,
where $q$ is the rank of the symmetric space, and by $\delta$ 
the highest restricted root. The fundamental restricted weights 
$\lambda_1,\ldots,\lambda_q$ as usual
are defined by $2\langle \lambda_i,\alpha_j\rangle/||\alpha_j||^2=\delta_{ij}$ for all $i$, $j$ (Kronecker delta).

\subsection{Spaces of maximal rank}\label{max-rk}

These are characterized by anyone of the following
equivalent conditions: $\mathrm{rk}\,M=\mathrm{rk}\,G$;
or $\Lk_0=0$; or uniform multiplicity~$1$. 
In the table, $q$ denotes the rank of $M$. 

\[ \begin{array}{|c|c|}
\hline  
AI & SU(q+1)/SO(q+1) \\
BI & SO(2q+1)/S(O(q+1)\times O(q))\\
CI & Sp(q)/U(q) \\
DI & SO(2q)/S(O(q)\times O(q))\\
G & G_2/SO(4) \\
FI & F_4/Sp(3)Sp(1) \\
EI & E_6/[Sp(4)/\Z_2]\\
EV & E_7/[SU(8)/\Z_2]\\
EVIII & E_8/Spin(16)\\
\hline
\end{array}\]

All nodes of the Satake diagram are white and there are no curved arrows, so 
the Satake diagram coincides with the Dynkin diagram. 
Therefore $\tau$ is of inner type, $\tau=\Ad_g$ for some 
$g\in N_G(K)$. Going through the detailed description
of bottom spaces for the spaces of maximal rank in~\cite[p.~314-5]{W2}
and noting that $g\in Z_G(\La)$, we deduce that $g\in K_0$. 
Here $K_0=\Z_2^q$, so $g$ is completely determined
by its sign on $\Lp_{\alpha_i}\cong\R$ for $i=1,\ldots,q$.  

For each $\alpha\in\Delta^+$, write $\alpha=\sum_{i=1}^qn_i(\alpha)\alpha_i$,
where the $n_i(\alpha)$ are non-negative integers. 

\subsubsection{$M$ is a most singular orbit}
Without loss of generality,
we may pass to another $s$-orbit with the same tangent space
$T_{a_0}M$ and assume that $\alpha_{i_1}(a_0)=\sqrt{-1}$ for some
$i_1$, and
$\alpha_i(a_0)=0$ for $i\neq i_1$.
Now $\Lp_\alpha\subset T_{a_0}M$ holds for some $\alpha\in\Delta^+$ 
if and only if $n_{i_1}(\alpha)>0$. In particular
$\Lp_{\alpha_i}\subset\nu_{a_0}M$ for 
$i\neq i_1$,
so $\Phi|_{\mathfrak p_{\alpha_i}}=\mathrm{id}$ for $i\neq i_1$, and,
since $\Phi$ is assumed non-trivial, $\Phi|_{\mathfrak p_{\alpha_{i_1}}}=
-\mathrm{id}$. It follows that 
$\Phi|_{\mathfrak p_\alpha}=(-1)^{n_{i_1}(\alpha)}$ for all $\alpha\in\Delta^+$.
 
There is 
exactly one $\beta\in\Delta^+$  such that $n_{i_1}(\beta)$ is nonzero and even
(as this implies that $\Lp_\beta\subset T_{a_0}M$ and $\Phi|_{{\mathfrak p}_\beta}=1$),
and it must be that $n_{i_1}(\beta)=2$ (for $n_{i_1}(\beta)\geq4$ would
entail the existence of roots $\alpha$ with $n_{i_1}(\alpha)=2$).
We come to two cases.

\paragraph{$\bullet\ \beta=\delta$}
Then $\delta-\alpha_i\not\in\Delta$ for $i\neq i_1$ 
(because $n_{i_1}(\delta-\alpha_i)=2$),
so $\langle\delta,\alpha_i\rangle=0$ for all $i\neq i_1$ (since
the $\alpha_i$-chain through $\beta$ consists of $\beta$ only), and this 
implies that $\delta$ is a multiple of the fundamental weight $\lambda_{i_1}$.
By inspection of the highest root in each case (see e.g.~\cite[p.~273]{W}),
we see that
that this happens in all cases in the table, but $A_q$. 

\[ \begin{array}{|c|c|}
  \hline
A_q & \delta=\lambda_1+\lambda_q\\
B_q & \delta= \lambda_2 \\
C_q & \delta = 2\lambda_1 \\
D_q & \delta= \lambda_2 \\
G_2 & \delta= \lambda_2 \\
F_4 & \delta= \lambda_1 \\
E_6 & \delta= \lambda_2 \\
E_7 & \delta= \lambda_1 \\
E_8 & \delta= \lambda_8 \\
\hline
\end{array} \]
For such orbits, we have seen in section~\ref{sec:invol} that 
$\tau=e^{\pi\ad_{a_0}}$ induces an almost symmetry of 
$M=Ka_0$ (cf.~eqn.~(\ref{Phi-on-palpha})). This gives
exactly $8$ examples. 

\paragraph{$\bullet\ \beta\neq\delta$}
Here $n_{i_1}(\delta)=3$. By inspection, the highest root having a coefficient
equal to~$3$ implies that 
$\Lg$ is of exceptional 
type (cf.~\cite[p.~270]{W}). In the sequel, we use Bourbaki's convention for the numbering of the 
simple roots (cf.~\cite[pp.~251-275]{B}). 

In the case of $G_2$, we have $\delta=3\alpha_1+2\alpha_2$, so 
$i_1=1$ and $\beta=2\alpha_1+\alpha_2$
($\alpha_1$ is the short root). The other roots are 
$3\alpha_1+\alpha_2$,  $\alpha_1+\alpha_2$, $\alpha_2$, $\alpha_1$
and all have $n_1(\alpha)\neq2$, so $\alpha_1(a_0)=\sqrt{-1}$, $\alpha_2(a_0)=0$
yields an example. 

In the case of $F_4$, we have $i_1=2$, 
and $\delta-\alpha_1-\alpha_2$ and $\delta-\alpha_1-\alpha_2-\alpha_3$ are two roots with $n_2=2$, so there are no examples. 

In the case of $E_6$, we have $i_1=4$, and 
$\delta-\alpha_2-\alpha_4$ and $\delta-\alpha_2-\alpha_3-\alpha_4$ are two roots with $n_4=2$, so there are no examples. 

In the case of $E_7$, we can take $i_1=3$ or $5$. In the first case 
$\delta-\alpha_1-\alpha_3$ and $\delta-\alpha_1-\alpha_3-\alpha_4$ are two roots with $n_3=2$, so we do not get an example. In the second case,
$\delta-\alpha_1-\alpha_3-\alpha_4-\alpha_5$  and
$\delta-\alpha_1-\alpha_3-\alpha_4-\alpha_5-\alpha_6$ 
are two roots with $n_5=2$, so we do not get an example either. 

In the case of $E_8$, we can take $i_1=7$ or $2$. In the first case 
$\delta-\alpha_8-\alpha_7$ and $\delta-\alpha_8-\alpha_7-\alpha_6$ are two roots with $n_7=2$, so we do not get an example. In the second case,
$\delta-\alpha_8-\alpha_7-\alpha_6-\alpha_5-\alpha_4-\alpha_2$  and
$\delta-\alpha_8-\alpha_7-\alpha_6-\alpha_5-\alpha_4-\alpha_2-\alpha_3$  
are two roots with $n_2=2$, so we do not get an example either. 

\subsubsection{$M$ is an almost singular orbit}\label{aso}
Without loss of generality, we may assume that there are 
$i_1$, $i_2$ such that 
$\alpha_i(a_0)=\sqrt{-1}$ for $i=i_1$ or $i_2$ and $\alpha_i(a_0)=0$ otherwise. 
In particular $\Lp_{\alpha_{i_1}}\oplus\Lp_{\alpha_{\alpha_{i_2}}}\subset T_{a_0}M$. 
Here $\Phi|_{\mathfrak p_{\alpha_i}}=\mathrm{id}$ for $i\neq i_1$, $i_2$, and there
are two subcases.

\paragraph{$\bullet\ \Phi|_{\mathfrak p_{\alpha_{i_1}}}=\mathrm{id}$,
$\Phi|_{\mathfrak p_{\alpha_{i_2}}}=-\mathrm{id}$}
Note that $\alpha_{i_1}$ cannot be adjacent to $\alpha_i$ for $i\neq i_2$
in the Dynkin diagram. In fact, otherwise 
$\alpha_{i_1}+\alpha_i\in\Delta^+$ for 
some $i\neq i_2$ with $\Lp_{\alpha_{i_1}+\alpha_i}\subset T_{a_0}M$ and 
$\Phi|_{\alpha_{i_1}+\alpha_i}=\mathrm{id}$, yielding coindex at least $2$,
which is a contradiction to almost symmetry of $M$. This implies
that $\alpha_{i_1}$ is an extremal node of the Dynkin diagram and $\alpha_{i_2}$ 
  is adjacent to $\alpha_{i_1}$. Again by almost symmetry of $M$,
we must have $n_{i_2}(\alpha)$ odd for all $\alpha\in\Delta^+\setminus\{\alpha_{i_1}\}$. In particular, $n_{i_2}(\delta)=1$. Glancing at the coefficients
of~$\delta$ (recall that $\alpha_{i_2}$ is adjacent to an extremal node), we obtain only the following two possibilities.

\paragraph{$-\ A_q$}
$i_1=1$, $i_2=2$ and $M$ is a principal orbit and
a circle bundle over one of the  extrinsic symmetric focal orbits
$G_2(\R^{q+1})$; the almost symmetry of $M$ coincides with the
extrinsic symmetry of the focal orbit. Since there are two focal orbits,
we obtain two different almost symmetries for $M$. 

\paragraph{$-\ B_2$}
$i_1=1$, $i_2=2$ and $M$ is a principal orbit $[SO(3)\times SO(2)]/\Z_2$. 

Since we can freely choose the signs of the action of $k$ on each
$\Lp_{\alpha_i}$, this yields exactly two examples.

\paragraph{$\bullet\ \Phi|_{\mathfrak p_{\alpha_{i_1}}}=-\mathrm{id}$,
  $\Phi|_{\mathfrak p_{\alpha_{i_2}}}=-\mathrm{id}$}
Since $\Phi$ fixes a unique tangent direction, there is a unique
$\beta\in\Delta^+$ such that $n_{i_1}(\beta)+n_{i_2}(\beta)$ is positive
and even, thus, it is equal to~$2$. We consider two possibilities.

\paragraph{$-\ \beta=\delta$} Then $\delta-\alpha_i\not\in\Delta$
if $i\neq i_1$, $i_2$. This implies that $\langle\delta,\alpha_i\rangle=0$
for all $i\neq i_1$, $i_2$, hence $\delta$ is a linear
combination of two fundamental weights $\lambda_{i_1}$, $\lambda_{i_2}$.
The only posibility is $A_q$ and
\[ a_0=\left(\begin{array}{ccccc}1&&&&\\&0&&&\\&&\ddots&&\\&&&0&\\
  &&&&-1\end{array}\right). \]
The (uniform) multiplicity must be one. This example has already appeared
above, a circle bundle over $G_2(\R^{q+1})$. 

\paragraph{$-\ \beta\neq\delta$} Since $\beta\pm\alpha_i\not\in\Delta$
for $i\neq  i_1$, $i_2$, $\beta$ and $\alpha_i$ are strongly orthogonal
for all $i\neq i_1$, $i_2$, so $\beta$ lies in the linear span 
of $\lambda_{i_1}$, $\lambda_{i_2}$. Also, $n_{i_1}(\delta)+n_{i_2}(\delta)$ is odd,
hence equals~$3$. We run through the cases. 

For $A_q$, $G_2$, $F_4$ and $E_8$, there are no distinct indices 
$i_1$, $i_2$ such that $n_{i_1}(\delta)+n_{i_2}(\delta)=3$.

For $E_6$, we can have $(i_1,i_2)=(1,2)$ or $(1,3)$, up to symmetry of the
Dynkin diagram, but then  
$\alpha_1+\alpha_2+\alpha_3+\alpha_4$ and 
$\alpha_1+\alpha_2+\alpha_3+\alpha_4+\alpha_5$ are two roots
with $n_{i_1}+n_{i_2}=2$.  

For $E_7$, we can have $(i_1,i_2)=(1,7)$ or $(2,7)$, but 
$\alpha_1+\alpha_3+\alpha_4+\alpha_5+\alpha_6+\alpha_7$, 
$\alpha_1+\alpha_2+\alpha_3+\alpha_4+\alpha_5+\alpha_6+\alpha_7$, 
$\alpha_2+\alpha_3+2\alpha_4+\alpha_5+\alpha_6+\alpha_7$ are roots;
the first two have $n_1+n_7=2$, and the last two have 
$n_2+n_7=2$.

For $B_q$, $i_1=1$ and $i_2>1$. Now $\theta_1$ always has $n_{i_1}+n_{i_2}=2$,
$\theta_1+\theta_q$ has $n_{i_1}+n_{i_2}=2$ if $i_2<q$,
and $\theta_{q-1}+\theta_q$ has $n_{i_1}+n_{i_2}=2$ if $i_2=q$.

For $C_q$, $i_1<q$ and $i_2=q$. Now $\theta_1+\theta_q$
always has $n_{i_1}+n_{i_2}=2$,
$\theta_2+\theta_q$ has $n_{i_1}+n_{i_2}=2$ if $i_1>1$,
and $\theta_1+\theta_{q-1}$ has $n_{i_1}+n_{i_2}=2$ if $i_1=1$.

For $D_q$, $i_1=1$ and $i_2<q-1$ or $1<i_1$ and $i_2=q-1$,
up to symmetry of the Dynkin diagram. 
Now $\theta_1+\theta_{q-1}$
always has $n_{i_1}+n_{i_2}=2$,
$\theta_1-\theta_{q-1}$ has $n_{i_1}+n_{i_2}=2$ if $i_1=1$ and $i_2<q-1$,
and $\theta_2+\theta_{q-1}$ has $n_{i_1}+n_{i_2}=2$ if $i_1>1$ and $i_2=q-1$.

Hence no new examples appear here.

\subsection{The group manifold case}

These are the symmetric spaces of the form $X=L\times L/\Delta_L$,
where $L$ is a compact connected simple Lie group. They can be equivalently 
characterized as having uniform multiplicity~$2$. 

The $s$-orbital foliation of $\Lp$ 
is  identified with the adjoint foliation of $\Ll$.  
Consider an orbit $M=La_0$ and assume $\Phi:\Ll\to\Ll$ 
is an almost symmetry for $M$. In particular $\Phi$ 
restricts to the identity along the Cartan subalgebra of $\Ll$,
so it is of inner type, $\Phi=\Ad_g$ for some $g\in L$ (in fact,
$g$ lies in the maximal torus of $L$). 
Since $L$ is connected, $\det\Ad_g=1$. On the other hand, the
$(-1)$-eigenspace of $\Phi$ on $T_{a_0}M$
has codimension $1$ in $T_{a_0}M$ and $M$ is even dimensional,
which forces $\det\Phi=-1$, a contradiction. So there are no
examples in the group case. 

We can push this idea further.  
Recall that  an irreducible symmetric space $G/K$ is called
\emph{of splitting rank} 
if $\mathrm{rk}\,G=\mathrm{rk}\,K+\mathrm{rk}\,M$. They can be 
equivalently characterized as having all multiplicities even. 

\begin{prop}\label{split-rk}
Suppose $X=G/K$ is a simply-connected irreducible symmetric space of compact 
type, rank bigger than one, and splitting rank.
Then no $K$-orbit in $\Lp$ is almost symmetric
with respect to an inner automorphism of $G$ defined by an element of $K$.
\end{prop}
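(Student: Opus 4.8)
The plan is to compute the determinant of $\Phi$ acting on $\Lp$ in two different ways and derive a parity contradiction, thereby generalizing the determinant argument already carried out in the group manifold case (which is exactly the instance of uniform multiplicity two). By hypothesis $\Phi=\Ad_g|_{\mathfrak p}$ for some $g\in K$, and since $G$ is simply-connected with $K$ connected, I would write $g$ as a product of exponentials $\exp X$ with $X\in\Lk$. Each factor $\Ad_{\exp X}=e^{\ad_X}$ is the exponential of a skew-symmetric operator with respect to the $\Ad$-invariant inner product, hence has determinant $1$ on every $\Ad(K)$-invariant subspace; in particular $\det(\Phi|_{\mathfrak p})=1$.

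On the other hand, I would read off $\det(\Phi|_{\mathfrak p})$ directly from the almost symmetry condition. Since $\Phi$ fixes $a_0$ and preserves $M$, it preserves the orthogonal splitting $\Lp=T_{a_0}M\oplus\nu_{a_0}M$; it restricts to the identity along $\nu_{a_0}M$, and, being involutive, it is diagonalizable on $T_{a_0}M$ with eigenvalues $\pm1$. By the definition of an almost symmetry its $(+1)$-eigenspace in $T_{a_0}M$ is one-dimensional, so the $(-1)$-eigenspace has dimension $\dim M-1$. Splitting the determinant over the two blocks then gives $\det(\Phi|_{\mathfrak p})=(-1)^{\dim M-1}$.

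It remains to compute the parity of $\dim M$. Using $T_{a_0}M=\sum_{\alpha(a_0)\neq0}\Lp_\alpha$, the dimension of $M$ equals the sum of the restricted-root multiplicities $\dim\Lp_\alpha$ over the positive roots not vanishing on $a_0$. Since $X$ has splitting rank, all of these multiplicities are even, so $\dim M$ is even and $(-1)^{\dim M-1}=-1$. Comparing with the first computation yields $1=-1$, the desired contradiction. I do not anticipate a genuine obstacle here: the only delicate points are making sure the determinant is taken on $\Lp$ rather than on all of $\Lg$ (which is precisely where connectedness of $K$ is used) and correctly tallying the eigenvalue multiplicities coming from the definition of almost symmetry, so that the parity of $\dim M$ is the sole quantity controlling the sign.
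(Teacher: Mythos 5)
Your proposal is correct and is essentially identical to the paper's own proof: both compute $\det(\Phi|_{\mathfrak p})$ once as $+1$ from connectedness of $K$, and once as $(-1)^{\dim M-1}=-1$ using that all restricted-root multiplicities are even for a splitting-rank space, so that $\dim M=\dim T_{a_0}M$ is even. The extra details you supply (writing $g$ as a product of exponentials, the eigenspace bookkeeping on $T_{a_0}M\oplus\nu_{a_0}M$) are exactly the steps the paper leaves implicit.
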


\Pf Consider an orbit $M=Ka_0$. Then 
$T_{a_0}M=\sum_{\alpha\in\Delta^+:\alpha(a_0)\neq0}\Lp_\alpha$ is even dimensional
(since all multiplicities are even). 
If $\Phi:\Lp\to\Lp$ is an almost symmetry for $M$, then 
$\det\Phi=(-1)^{\dim M-1}=-1$. If $\Phi=\Ad_k$
is an almost symmetry of $M$, where $k\in K$, 
then $\det\Ad_k=1$ (since $K$ is connected), a contradiction. \EPf   

\subsection{The $A_q$-case}

There are four irreducible symmetric spaces of compact type
and Dynkin diagram of type $A_q$, each with uniform multiplicity~$m$,
where $m=1$, $2$, $4$, $8$, respectively, and $q=2$ if $m=8$.
The case $m=1$ is the space of maximal rank AI, 
and the case $m=2$ is the group case of type
$A$, which have both been dealt with already. There remains two spaces. 

\subsubsection{AII}

The space is $SU(2q+2)/Sp(q+1)$ and $m=4$. The effective space is
\[ [SU(2q+2)/\Z_2]/[Sp(q+1)/\Z_2]. \]
From the Satake diagram of AII, we invoke Lemmata~\ref{2} and~\ref{0}
to see that $\Phi=\Ad_g|_{\mathfrak p}$ for some $g\in G$. 
Now the conjugation $\mathrm{Inn}_g|_K$ is an automorphism of $K$, 
which must be inner
because $K$ has no outer automorphisms, that is, 
we can find $k\in K$ such that $gk^{-1}\in Z_G(K)$. But in this case 
the latter group coincides with the center $Z(G)$
(since he bottom space of $X$ is
$\frac{[SU(2q+2)/\Z_2]/\Z_{q+1}}{Sp(q+1)/\Z_2}$, where $\Z_{q+1}$ is
the center of $SU(2q+2)/\Z_2$, cf.~\cite[p.~315]{W2}), 
so $\Phi=\Ad_k|_{\mathfrak p}$.
In view of Proposition~\ref{split-rk} no orbit is 
almost symmetric.

\subsubsection{EIV}

The space is $E_6/F_4$ and $m=8$. This is an effective 
presentation and the bottom space is 
\[ \frac{E_6/\Z_3}{F_4\Z_3/\Z_3}=\frac{E_6}{F_4\Z_3}=\frac{E_6/\Z_3}{F_4}, \]
so $Z_{E_6}(F_4)=Z(E_6)=\Z_3$. 
The same argument involving
Lemmata~\ref{2} and~\ref{0}
and Proposition~\ref{split-rk} 
as in case AII implies that no orbit is 
almost symmetric. 

\subsection{Hermitian symmetric spaces}

\subsubsection{AIII} The space is $SU(p+q)/S(U(p)\times U(q))$. Suppose
$M=Ka_0$ is almost symmetric, with almost symmetry $\Phi$.
Then $\Phi|_{\mathfrak a}=\mathrm{id}$. In view
of Lemma~\ref{0} and the Satake diagram, 
there are two cases to be considered.

\paragraph{$\bullet\ \Phi$ is inner} Here $\Phi=\Ad_g$ for some $g$
in the normalizer $N_G(K)$ of $K$ in $G$. 
Consider first the case $g=k\in K$, so that
$k\in K_0\cong S(U(1)^q\times U(p-q))$. Then $\Phi$ commutes
with the complex structure $J$ on $\Lp$, therefore $\Phi|_{J\mathfrak a}=
\mathrm{id}$, that is,  
$\Phi|_{{\mathfrak p}_{2\theta_i}}=
\mathrm{id}$ for $i=1,\ldots, q$. Since we are assuming that the rank $q\geq2$,
by the almost-symmetry assumption, at most one index $i$ has
$2\theta_i(a_0)\neq0$, in fact, exactly one, say, $\theta_1(a_0)=\sqrt{-1}$
and $\theta_i(a_0)=0$ for $i>1$ (since the orthogonal 
complement of $J\La$ in $T_{a_0}M$ is a complex subspace). Now
$\alpha_1(a_0)=\sqrt{-1}$ and $\alpha_2(a_0)=\cdots=\alpha_q(a_0)=0$. The tangent space
\[ T_{a_0}M=\Lp_{\theta_1}+\Lp_{2\theta_1}+\sum_{j=2}^q\Lp_{\theta_1+\theta_j}+
\sum_{j=2}^q\Lp_{\theta_1-\theta_j}. \]
We need $\Phi=\Ad_k$ to be $\mathrm{-id}$ on all summands of $T_{a_0}M$, but the
second one, and to be $\mathrm{id}$ on $\La$ and the other
restricted root spaces. 
Suppose
\[ \Lk=\left\{\left(\begin{array}{cc}A&\\&B\end{array}\right)\;:\;A\in\mathfrak{u}(q), B\in\mathfrak{u}(p), \mathrm{tr}(A+B)=0\right\} \]
and
\begin{equation}\label{p} 
\Lp=\left\{\left(\begin{array}{cc}&Z\\-Z^*\end{array}\right)\;:\;Z\in\mathfrak{gl}(q,\C)\right\}. 
\end{equation}
  Take
  \[ \La=\sum_{i=1}^q\R(E_{i,q+i}-E_{q+i,i}). \]
Then
\begin{equation}\label{k-element}
 k=\left(\begin{array}{ccc}I_{1,q-1}&&\\&I_{1,q-1}&\\&&I_{p-q}\end{array}\right)\in K_0 
\end{equation}
does the job, where
\[ I_{1,q-1}=\left(\begin{array}{cccc}-1&&&\\&1&&\\&&\ddots&\\&&&1\end{array}\right) \]
  has size~$q$, and $I_{p-q}$ denotes the identity matrix of size $p-q$.

Next we observe that $N_G(K)\neq K$ occurs only if $p=q$, in which case 
$N_G(K)/K=\Z_2$ generated by the class of the map $\beta:x\mapsto x^\perp$
that takes a $q$-plane to its orthogonal complement in $2q$-space. 
We can represent $\beta$ by right-multiplication by the matrix
\begin{equation}\label{g0}
  g_0=\left(\begin{array}{cc}&I\\-I&\end{array}\right).
  \end{equation}
If $\Phi$ involves $g_0$, this means that 
$\Phi=\Ad_{g_0k}$ for some $k\in K_0\cong S(U(1)^q)$.
We have $\Ad_{g_0}(Z)=Z^*$ 
in regard to~(\ref{p}). The Dynkin diagram is  $C_q$,
where $\theta_i\pm\theta_j$ has multiplicity $2$ and $2\theta_i$ has 
multiplicity $1$. The group~$K_0$ is connected,
so it acts trivially on $\Lp_{2\theta_i}$. 
Now $\Phi|_{\Lp_{2\theta_i}}=-\mathrm{id}$, so 
$\Lp_{2\theta_i}\subset T_{a_0}M$ for all $i$. Also
$\Phi|_{\mathfrak p_{\theta_i\pm\theta_j}}=\mp\Ad_k|_{\mathfrak p_{\theta_i\pm\theta_j}}$, but $\dim\Lp_{\theta_i\pm\theta_j}=2$,
so it is impossible to have an almost symmetric $M=Ka_0$.

\paragraph{$\bullet\ \Phi$ is not inner} Suppose $\Phi=\sigma_0\Ad_k$ 
for some $k\in K_0$,
where $\sigma_0X=\bar X$ for $X\in\Lg$.  Then $\Phi$ anticommutes
with the complex structure~$J$ on~$\Lp$, therefore $\Phi|_{J\mathfrak a}=-\mathrm{id}$,
so that $\Phi|_{{\mathfrak p}_{2\theta_i}}=
-\mathrm{id}$ for $i=1,\ldots, q$. The restricted root spaces
\[ \Lp_{\theta_i\pm\theta_j}\ \mbox{and}\ \Lp_{\theta_i} \]
are complex and $\Phi$-invariant, with multiplicities
$2$ and $2(p-q)$. In particular $\Phi$ has 
$1$ and $-1$ as eigenvalues on $\Lp_{\theta_1\pm\theta_2}$, each  
with multiplicity $1$ (recall that $q\geq2$). This implies
that both $\Lp_{\theta_1\pm\theta_2}$ must be in $T_{a_0}M$, but then the 
codimension of the fixed point set of $\Phi$ in $T_{a_0}M$ is at least~$2$,
a contradiction. Hence there are no examples in this case. 

The other case is~$p=q$ and~$\Phi=\sigma_0\Ad_{g_0k}$, $g_0$ as
in~(\ref{g0}),
for some~$k\in K_0$. Then  $\Phi$ commutes
with the complex structure on $\Lp$ and this forces $a_0$ to be given by 
$\theta_1(a_0)=\sqrt{-1}$ and $\theta_i(a_0)=0$ for $i>1$, up to renaming the 
indices, by the same argument as in the case $\Phi$ is inner.
But we saw above
that this s-orbit is almost symmetric with respect to $\Phi$ of the 
form $\Ad_k$ where $k$ is given by~(\ref{k-element}). 

\subsubsection{EVII} The space is $X=E_7/\{[E_6\times U(1)]/\Z_3\}$.
Suppose
$M=Ka_0$ is almost symmetric, with almost symmetry $\Phi$.
Then $\Phi|_{\mathfrak a}=\mathrm{id}$. 
According to~\cite[p.~314]{W2}, the bottom space is the quotient
by a $\Z_2$-group which is not contained in $E_7$. Therefore 
$N_G(K)=K$. In view
of Lemma~\ref{0} and the Satake diagram, 
$\Phi=\Ad_k$ for some $k\in K_0\cong Spin(8)$. 

Now $\Phi$ commutes with the complex structure~$J$ on~$\Lp$. 
The Dynkin diagram is $C_3$ and $J(\La)$ is spanned by 
$\Lp_{\theta_i}$ for $i=1$, $2$, $3$ (each one of dimension~$1$). 
Therefore $\Lp_{\theta_i}\subset T_{a_0}M$ for 
at most one index, say, $1$. 
Thus we can write
\[ T_{a_0}M = \Lp_{\theta_1+\theta_2}+\Lp_{\theta_1-\theta_2}+\Lp_{\theta_1+\theta_3}+\Lp_{\theta_1-\theta_3}+\Lp_{\theta_1}, \]
where $\Phi=-\mathrm{id}$ on each summand, but the last one where it is 
$\mathrm{id}$. 

It is known that $K_0$ acts on $\Lp_{\theta_i+\theta_j}$ and 
$\Lp_{\theta_i-\theta_j}$ by the same one of the $8$-dimensional 
irreducible representations of $Spin(8)$, and in fact,
it acts on the triple
$(\Lp_{\theta_1+\theta_2}$, $\Lp_{\theta_2+\theta_3}$, $\Lp_{\theta_1+\theta_3})$
by three inequivalent $8$-dimensional 
irreducible representations of $Spin(8)$, say $(\rho_8, \Delta_8^+,\Delta_8^-)$. 
Let $k$ be the nontrivial element in $\ker\Delta_8^+\cong\Z_2$. Then 
$\rho_8(k)=\Delta_8^-(k)=-\mathrm{id}$ acts on $\Lp$ as wished. 
This produces the example of almost symmetric 
submanifold $M=Ka_0\cong E_6/Spin(10)$. It is also a 
homogeneous CR-manifold.

\subsubsection{EIII} The space 
is $X=E_6/\{[Spin(10)\times U(1)]/\Z_4\}$ and this is also
the bottom space~\cite[p.~314]{W2}. In particular $N_G(K)=K$. 
Suppose
$M=Ka_0$ is almost symmetric, with almost symmetry $\Phi$.
Then $\Phi|_{\mathfrak a}=\mathrm{id}$. In view
of Lemma~\ref{0} and the Satake diagram, 
there are two cases, namely, 
$\Phi=\Ad_k$ or $\Phi=\tau\Ad_k$, for some
$k\in K_0\cong U(4)$, where $\tau$ is the outer 
involution of $E_6$ with fixed point set $F_4$. 
It follows from~\cite[Example~3.5]{K} that $\tau$ has eigenvalues 
$\pm1$ on $\Lp$ with multiplicities $16$ and $16$. In particular, $\tau$
preserves the orientation of $X$. Since $K_0$ is connected,
in any case~$\Phi$ is orientation-preserving. It follows that the 
dimension of $M$ must be odd. But $X$ is a rank~$2$ symmetric space,
and the s-orbits have dimensions $30$, $24$ and $21$, respectively. 
Therefore $M$ must be the $21$-dimensional 
singular orbit $Spin(10)/SU(5)$ (cf.~\cite[p.~77]{Ko}). 

We next show that there is $k\in K_0$ such that $\Phi=\Ad_k$ 
induces an almost symmetry on that singular orbit. The Dynkin diagram
of~$X$ is~$BC_2$, and~$M=Ka_0$, where we may assume
$\theta_1(a_0)=\sqrt{-1}$, $\theta_2(a_0)=0$.
Now
\[ T_{a_0}M = \Lp_{\theta_1+\theta_2}+\Lp_{\theta_1-\theta_2}+\Lp_{\theta_2}+
\Lp_{2\theta_2}, \]
where the summands have dimensions $6$, $6$, $8$ and $1$, respectively,
and $\Phi=-\mathrm{id}$ on each summand, but the last one where
$\Phi=\mathrm{id}$. It is known that $K_0\cong U(4)$ acts as
$(SU(4),\C^4)$ on $\Lp_{\theta_2}$, as $(SO(6),\R^6)$ on
$\Lp_{\theta_1\pm\theta_2}$, and trivially on $\Lp_{2\theta_2}$. 
Let $k=(\sqrt{-1}I_4,\sqrt{-1}I_4)\in U(1)\times SU(4)$. 
Since $\sqrt{-1}I_4$ maps to $I_6$ under the double covering $SU(4)\to SO(6)$,
$k$ acts as $-1$ on
$\Lp_{\theta_1\pm\theta_2}\cong\R^6$ and on~$\Lp_{\theta_2}\cong\C^4$,
as wished.

\subsubsection{BDI} The space is $X=SO(p+2)/SO(2)\times SO(p)$
and the bottom space 
is $SO(p+2)/S(O(2)\times O(p))$~\cite[p.~314]{W2}, 
that is $N_G(K)=S(O(2)\times O(p))$,
where BI (resp.~DI) corresponds to $p$
odd (resp.~even). 
Suppose
$M=Ka_0$ is almost symmetric, with almost symmetry $\Phi$.
Then~$\Phi|_{\mathfrak a}=\mathrm{id}$.

\paragraph{$\bullet\ p$ is odd} In view
of Lemma~\ref{0} and the Satake diagram, $\Phi=\Ad_g$ for some
$g\in Z_{N_G(K)}(\La)\cong \Z_2^2\times SO(p-2)$. 
Suppose first that $g=k\in K_0=Z_K(\La)\cong \Z_2 \times SO(p-2)$. 
Then $\Phi$ commutes
with the complex structure on $\Lp$, so $\Phi|_{\mathfrak p_{\theta_1\pm\theta_2}}=
\mathrm{id}$. It follows that exactly one of $\theta_1\pm\theta_2$
in nonzero on $a_0$, say $\theta_1(a_0)=\theta_2(a_0)=\sqrt{-1}$.
Now
\[ T_{a_0}M=\Lp_{\theta_1}+\Lp_{\theta_2}+\Lp_{\theta_1+\theta_2}, \]
where $\Phi=-\mathrm{id}$ on the first two summands, and
$\Phi=\mathrm{id}$ on the last summand.

Suppose
\[ \Lk=\left\{\left(\begin{array}{cc}A&\\&B\end{array}\right)\;:\;A\in\mathfrak{so}(2), B\in\mathfrak{so}(p), \right\}. \]
  Take
  \[ \La=\sum_{i=1}^2\R(E_{i,2+i}-E_{2+i,i}). \]
Then
\begin{equation}\label{k2}
  k=I_{4,p-2}=\left(\begin{array}{cc}-I_4&\\&I_{p-2}\end{array}\right)\in K_0
  \end{equation}
does the job and yields an example of almost symmetric 
submanifold (this element generates the $\Z_2$-factor of $K_0$). In fact
\[ \Lp_{\theta_i}=\sum_{j=1} ^{p-2}\R(E_{i,4+j}-E_{4+j,i}), \]
and
\[ \Lp_{\theta_1\pm\theta_2}=\R(E_{14}\mp E_{23})-(E_{41}\mp E_{32}). \]

Next we consider the case in which $g\in N_G(K)\setminus K$; we may assume
$g=g_0k$, where
\[ g_0=\left(\left(\begin{array}{cc}-1&\\&1\end{array}\right),
\left(\begin{array}{cc}-1&\\&I_{p-1}\end{array}\right)\right)\in S(O(2)\times O(p))
\]
and $k\in K_0$.
In this case $\Phi$ anticommutes with the complex structure on $\Lp$,
so $\Phi|_{\mathfrak p_{\theta_1\pm\theta_2}}=-\mathrm{id}$. 
It follows that $\Lp_{\theta_1\pm\theta_2}\subset T_{a_0}M$. If one 
of $\Lp_{\theta_1}$, $\Lp_{\theta_2}$ is contained in $\nu_{a_0}M$,
say, $\Lp_{\theta_1}\subset\nu_{a_0}M$, then $\Phi|_{\mathfrak p_{\theta_1}}=\mathrm{id}$.
Since $J\Lp_{\theta_1}=\Lp_{\theta_2}$, this implies that 
$\Phi|_{\mathfrak p_{\theta_2}}=-\mathrm{id}$ and $\Lp_{\theta_2}\subset T_{a_0}M$.
This gives $\Phi=-\mathrm{id}$ on $T_{a_0}M$, a contradiction. 
We deduce that $M$ is a principal orbit. But we saw above
that $\Phi|_{\mathfrak p_{\theta_1\pm\theta_2}}=-\mathrm{id}$, and $\Phi$ 
has opposite signs on $\Lp_{\theta_1}$ and $\Lp_{\theta_2}$ (both of 
dimension~$p-2$). The only possibility is $p=3$, which gives a 
space of maximal rank, already studied in subsection~\ref{max-rk}.

\paragraph{$\bullet\ p$ is even} Here $\Phi=\Ad_g$
or $\Phi=\tau\Ad_g$ for some $g\in N_G(K)$,
where $\tau$ is the outer automorphism of $\Lg$ given by
conjugation by the matrix $g_0=I_{1,p+1}$.
Since $p$ is even, $K_0\cong\Z_2\times SO(p-2)$ acts
on $\Lp$ preserving the orientation, 
and $\det(\tau:\Lp\to\Lp)=\det(\Ad_{g_0}:\Lp\to\Lp)=(-1)^p=1$. 
Therefore $\Phi$ is orientation preserving.
It follows that $\dim M$ is odd. Since $X$ has dimension $2p$ and rank $2$,
with multiplicities $1$ and $p-2$, we deduce that again $M$
must the singular orbit $Ka_0$ with
$\theta_1(a_0)-\theta_2(a_0)=\alpha_1(a_0)=0$.
And in fact $\Phi=\Ad_k$ with the same $k\in K_0$ given as in~(\ref{k2})
defines an almost symmetry for this~$M$. 

We deduce that, for all $p\geq3$, the singular orbit
(Stiefel manifold)~$SO(p)/SO(p-2)=V_2(\R^p)$
is an almost symmetric submanifold.

\subsubsection{DIII} The space is $X=SO(2n)/U(n)$, where $n=2q$
or $2q+1$. 
Suppose
$M=Ka_0$ is almost symmetric, with almost symmetry $\Phi$.

The Dynkin diagram is~$C_q$ or $BC_q$ according to whether $n$ is even or odd,
and the Satake diagram also differs. 
According to~Lemma~\ref{0} and the Satake diagram,
if $n$ is even then 
$\Phi=\Ad_g$ for some
$g\in N_G(K)$. Here $N_G(K)$ generated by
$g_0=I_{n,n}$ and $K_0\cong SU(2)^q$. On the other hand, if $n$ is odd then
$N_G(K)=K$, but there is an admissible  symmetry of the Satake diagram,
so $\Phi=\Ad_k$ or $\Phi=\tau\Ad_k$,
for some
$k\in K_0\cong SU(2)^q\cdot U(1)$,
where $\tau=\Ad_{I_{n,n}}$ is an outer automorphism of $\mathfrak{so}(2n)$.
Thus in either case $\Phi=\Ad_k$ or $\Phi=\tau\Ad_k$, where
$k\in K_0$ and $\tau=\Ad_{I_{n,n}}$. Note that $\tau$ preserves the restricted
root spaces. 

\paragraph{$\bullet\ \Phi=\Ad_k$} 
Then $\Phi$ commutes with the
complex structure~$J$ on $\Lp$, so $\Phi=\mathrm{id}$ on
$J(\La)=\sum_{i=1} ^q\Lp_{2\theta_i}$. Therefore for exactly one index~$i$
we have $\Lp_{2\theta_i}\subset T_{a_0}M$ (the multiplicity of all $2\theta_i$
is $1$). Recall that
\[ \alpha_1=\theta_1-\theta_2,\ldots,\alpha_{q-1}=\theta_{q-1}-\theta_q,\
\alpha_q=\left\{\begin{array}{ll}2\theta_q,&\mbox{if $q$ is even,}\\
\theta_q,&\mbox{if $q$ is odd.} \end{array}\right. \]
Without loss of generality,
we may assume that $\theta_1(a_0)=\sqrt{-1}$ and $\theta_2(a_0)=\cdots=\theta_q(a_0)=0$,
that is, $\alpha_1(a_0)=1$ and $\alpha_2(a_0)=\cdots=\alpha(a_0)=0$. Now
\[ T_{a_0}M = \Lp_{2\theta_1}+ \sum_{j=2}^q \Lp_{\theta_1+\theta_j}+\sum_{j=2}^q
\Lp_{\theta_1-\theta_j}, \]
if $n$ is even, and
\[ T_{a_0}M = \Lp_{2\theta_1}+ \Lp_{\theta_1}+\sum_{j=2}^q \Lp_{\theta_1+\theta_j}+\sum_{j=2}^q
\Lp_{\theta_1-\theta_j}, \]
if $n$ is odd, 
where in each case 
$\Phi=\mathrm{id}$ on the first summand, and $\Phi=-\mathrm{id}$
on the other summands.

Now we take $k=(-1,1,\ldots,1)\in K_0\cong Sp(1)^q$
(resp.~$k=(-1,1,\ldots,1;1)\in Sp(1)^q\cdot U(1)$)
and note that
$\Ad_k$ acts on $\Lp$ as wished, since
$(k_1,\ldots,k_q)\in K_0\cong Sp(1)^q$
(resp.~$(k_1,\ldots,k_q,t)\in K_0\cong Sp(1)^q\cdot U(1)$)
acts on $\Lp_{2\theta_i}\cong\R$ trivially
and on $x\in\Lp_{\theta_i\pm\theta_j}\cong \Q$ as $k_ixk_j^{-1}$
(quaternionic multiplication) in both cases; and 
on $y\in\Lp_{\theta_i}\cong\C^2$ as $k_iyt^{-1}$ in case $n$ is odd.
Therefore $M\cong U(2q)/SU(2)\times U(2q-2)$
(resp.~$U(2q+1)/SU(2)\times U(2q-1)$)
is an almost symmetric submanifold.

\paragraph{$\bullet\ \Phi=\tau\Ad_k$}
Recall that
\[ \Lk=\left\{\left(\begin{array}{cc}A&B\\-B&A\end{array}\right): A\in\mathfrak{so}(n),\ B^t=B\right\}, \]
and
\[ \Lp=\left\{\left(\begin{array}{cc}A&B\\B&-A\end{array}\right): A,\ B\in\mathfrak{so}(n)\right\}. \]
The complex structure on $\Lp$ is given by 
\[ J\left(\begin{array}{cc}A&B\\B&-A\end{array}\right)
=\left(\begin{array}{cc}-B&A\\A&B\end{array}\right). \]
It follows that $\tau$ anti-commutes with $J$, and so does $\Phi$. 
Therefore $\Phi|_{\mathfrak p_{2\theta_i}}=-\mathrm{id}$, so
$\sum_{i=1}^q\Lp_{\theta_{2i}}+\sum_{i=1}^q\Lp_{\theta_i}\subset T_{a_0}M$.
This implies that for each pair $i$, $j$ at least one
of $\Lp_{\theta_i\pm\theta_j}$ is contained in $T_{a_0}M$.

The description above of $\Lp_{\theta_i\pm\theta_j}$ as the $K_0$-representation
of real type
\[ (SO(4)=Sp(1)_{(i)}Sp(1)_{(j)},\R^4=\Q\otimes_{\mathbb H} \Q^*) \]
shows that $J$ must map $\Lp_{\theta_i\pm\theta_j}$ to~$\Lp_{\theta_i\mp\theta_j}$
(this can also be checked directly from the formulae in~\cite[p.~116]{Loos2}).

Fix $i,j$ and note that $\Lp_{\theta_{i}+\theta_{j}}+\Lp_{\theta_{i}-\theta_{j}}$
is a  $\Phi$-invariant complex subspace of
real dimension $8$. Therefore $\Phi$ has $4$ eigenvalues $+1$
and $4$ eigenvalues $-1$ there. Thus it must be that
one of~$\Lp_{\theta_i\pm\theta_j}$ is contained in $\nu_{a_0}M$
and the other one in $T_{a_0}M$. Recall that $\Phi$ anticommutes with $J$,
so if $n$ is even this means that
$\Phi=-\mathrm{id}$ on $T_{a_0}M$ which is a contradition.
If $n$ is odd, this means that $\Phi$ can still have a $(+1)$-eigenvalue
on~$\Lp_{i_0}$ ($\subset T_{a_0}M$)
for some $i_0$. But $\Lp_{i_0}$ is a $\Phi$-invariant 
complex subspace of
real dimension $4$, therefore $\Phi$ has $2$ eigenvalues $+1$
and $2$ eigenvalues $-1$ there, which is also a contradiction. 
Hence we get no other examples in this case.

\subsection{Real Grassmannians ($BDI$)}

The space is the Grassmannian of oriented planes
$X=SO(p+q)/SO(p)\times SO(q)$, and the
bottom space is the Grassmannian of unoriented planes
$SO(p+q)/S(O(p)\times O(q))$. The transvection group
is $SO(p+q)$ if $p$ or $q$ is odd, and $SO(p+q)/\Z_2$
if $p$ and $q$ are even. We may assume $p\geq q\geq3$
($q=2$ yields a Hermitian
symmetric space), and $p\neq q$, $q+1$
(for otherwise $X$ is a space of maximal rank).
Then the Dynkin diagram is~$B_q$. 
We suppose
\[ \Lk=\left\{\left(\begin{array}{cc}A&\\&B\end{array}\right)\;:\;A\in\mathfrak{so}(q), B\in\mathfrak{so}(p), \right\}, \]
and take
  \[ \La=\sum_{i=1}^q\R(E_{i,q+i}-E_{q+i,i}). \]
Assume 
$M=Ka_0$ is almost symmetric, with almost symmetry~$\Phi$.
Then $\Phi|_{\mathfrak a}=\mathrm{id}$.

\paragraph{$\bullet\ \Phi$ is inner} 
In view
of Lemma~\ref{0} and the Satake diagram, this must be the case 
if~$n=p+q$ is odd. 
So  $\Phi=\Ad_g$ for some
$g\in Z_{N_G(K)}(\La)\cong \Z_2^q\times SO(p-q)$. By relabeling the entries, we
may assume that
\[ g =\left(\begin{array}{ccc}I_{r,s}&&\\&I_{r,s}&\\&&A\end{array}\right), \]
  where $A\in SO(p-q)$ ($A$ has order~$2$, so it can be further assumed
  to have only diagonal entries $\pm1$, up to conjugation).
  It follows that $\Phi|_{\mathfrak p_{\theta_i\pm\theta_j}}=\mathrm{id}$ if $1<i<j\leq r$ or $r+1\leq i< j\leq q$, and
$\Phi|_{\mathfrak p_{\theta_i\pm\theta_j}}=-\mathrm{id}$ otherwise.
  In particular, at most one of
  $\Lp_{\theta_i+\theta_j}$ or~$\Lp_{\theta_i-\theta_j}$
for some $i<j\leq r$ or~$r+1\leq i<j$ is contained in $T_{a_0}M$. 
It is clear that at least one of them must be contained in $T_{a_0}M$ 
(otherwise all $\theta_i(a_0)=0$). 
By relabeling the indices, we may assume 
$\theta_1+\theta_2(a_0)\neq0$, $\theta_1-\theta_2(a_0)=0$ 
and $\theta_i\pm\theta_j(a_0)=0$ if $i<j\leq r$ and $(i,j)\neq(1,2)$,
or $r+1\leq i<j$. And $\theta_i\pm\theta_j(a_0)\neq0$ if $i\leq r<j$. 
The only solution, up to scaling, is 
\[ r=2, \ \theta_1(a_0)=\theta_2(a_0)=\sqrt{-1},\ \theta_3(a_0)=\cdots\theta_q(a_0)=0. \]
Now 
\[ T_{a_0}M=\Lp_{\theta_1+\theta_2}+\Lp_{\theta_1}+\Lp_{\theta_2}
+\sum_{j=3}^q\Lp_{\theta_1+\theta_j} 
+\sum_{j=3}^q\Lp_{\theta_1-\theta_j} 
+\sum_{j=3}^q\Lp_{\theta_2+\theta_j} 
+\sum_{j=3}^q\Lp_{\theta_2-\theta_j}, \] 
where $\Phi=\mathrm{id}$ on the first summand, and
$\Phi=-\mathrm{id}$ on the other summands. 

Note that
\begin{equation}\label{k}
  g=\left(\begin{array}{ccc}I_{2,q-2}&&\\&I_{2,q-2}\\&&I_{p-q}\end{array}\right)
  \end{equation}
indeed does the job, so we get an almost symmetric submanifold
$M\cong S(O(p)\times O(q))/S(O(2)\times O(p-2)\times O(q-2))$. 

\paragraph{$\bullet\ \Phi$ is not inner} 
This can happen if $n$ is even. The only difference from above is that 
$A\in O(p-q)$, but this does not affect the above argument. 

\subsection{Quaternion-K\"ahler symmetric spaces}

We prove the next proposition just in the degree of generality 
needed for the sequel.

\begin{prop}\label{qK}
Let $X'$ be a reflective submanifold 
of maximal rank of a symmetric space $X$ of exceptional type,
with $o\in X'$. 
If $M$ is an almost symmetric 
s-orbit in $T_oX$, then $M'=M\cap T_oX'$ is either an
almost symmetric or a symmetric s-orbit in $T_oX'$.
\end{prop}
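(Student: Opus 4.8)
The plan is to realize the almost symmetry of $M$ as an isometry of the ambient space $\Lp=T_oX$ that preserves the linear subspace $\Lp':=T_oX'$, and then restrict it. Write $\Lg=\Lk+\Lp$ for the symmetric decomposition of $X=G/K$ with Cartan involution $\sigma$. Since $X'$ is reflective there is an involution $\tau'\in\Aut(\Lg)$ commuting with $\sigma$ with $\Lp'=\{Y\in\Lp:\tau'Y=Y\}$ and $\Lq:=\nu_oX'=\{Y\in\Lp:\tau'Y=-Y\}$; set $\Lk'=\Lk^{\tau'}$ and $\Lk''=\{X\in\Lk:\tau'X=-X\}$, so that $\Lg'=\Lk'+\Lp'$ and $X'=G'/K'$. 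Because $X'$ has maximal rank, a Cartan subspace $\La$ of $\Lp'$ is also a Cartan subspace of $\Lp$, and I take $a_0\in\La\subseteq\Lp'$; note $\tau'|_\La=\mathrm{id}$. Since $\sigma$ and $\tau'$ commute, $T_{a_0}M=\ad(\Lk)a_0$ and $\nu_{a_0}M$ are $\tau'$-invariant, and one obtains the orthogonal splittings $T_{a_0}M=(T_{a_0}M\cap\Lp')\oplus(T_{a_0}M\cap\Lq)$ and $\nu_{a_0}M=(\nu_{a_0}M\cap\Lp')\oplus(\nu_{a_0}M\cap\Lq)$, where $T_{a_0}M\cap\Lp'=\ad(\Lk')a_0=T_{a_0}M'$ and $\nu_{a_0}M\cap\Lp'$ are respectively the tangent and normal spaces of the s-orbit $M'=K'a_0$ inside $\Lp'$.

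By Lemmata~\ref{1} and~\ref{2}, the almost symmetry $\Phi$ of $M$ is $\Phi=\psi|_\Lp$ for an involution $\psi\in\Aut(\Lg)^\sigma$, and since $\Phi$ is the identity on $\nu_{a_0}M\supseteq\La$ we have $\psi|_\La=\mathrm{id}$. Granting for the moment that $\Phi$ preserves $\Lp'$ (hence also $\Lq$), I set $\Phi':=\Phi|_{\Lp'}$. Then $\Phi'$ is an involutive isometry of $\Lp'$; it preserves $M'$ because $\Phi(M)=M$ and $\Phi(\Lp')=\Lp'$; it fixes $a_0$; and it is the identity on $\nu_{a_0}M\cap\Lp'$ since $\Phi=\mathrm{id}$ on $\nu_{a_0}M$. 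It remains to compute the fixed space of $\Phi'$ along $T_{a_0}M'$.

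Let $L\subset T_{a_0}M$ be the one-dimensional fixed line of $\Phi$. Because the splitting $T_{a_0}M=(T_{a_0}M\cap\Lp')\oplus(T_{a_0}M\cap\Lq)$ is $\Phi$-invariant, the fixed space of $\Phi$ decomposes accordingly, and being one-dimensional it lies entirely in $\Lp'$ or entirely in $\Lq$. If $L\subseteq\Lp'$, then $\Phi'$ has a one-dimensional fixed space in $T_{a_0}M'$, so $\Phi'$ is an almost symmetry and $M'$ is almost symmetric. If $L\subseteq\Lq$, then $\Phi'$ is an involution of $T_{a_0}M'$ with no fixed vectors, hence $\Phi'|_{T_{a_0}M'}=-\mathrm{id}$; together with $\Phi'=\mathrm{id}$ on $\nu_{a_0}M\cap\Lp'$ this says $\Phi'$ is an extrinsic symmetry, so $M'$ is a symmetric s-orbit. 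This yields exactly the stated dichotomy.

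The main obstacle is the claim that $\Phi$ preserves $\Lp'$, equivalently that $\psi$ commutes with $\tau'$ on $\Lp$. The mechanism I would use is this: since $\psi|_\La=\mathrm{id}$, $\psi$ commutes with every $\ad_H$, $H\in\La$, and therefore preserves each restricted root space $\Lp_\alpha$; consequently $\psi$ preserves $\Lp'$ as soon as it preserves each $\Lp_\alpha\cap\Lp'$. The clean sufficient condition is that $\tau'$ act as a scalar $\pm\mathrm{id}$ on every $\Lp_\alpha$, for then $\Lp'=\La\oplus\bigoplus_{\alpha:\,\tau'|_{\Lp_\alpha}=\mathrm{id}}\Lp_\alpha$ is a sum of full root spaces and is automatically $\psi$-invariant. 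I expect this scalar property to hold for the reflective submanifolds of maximal rank that actually occur in the exceptional spaces under consideration, and I would verify it case by case from their explicit description (e.g.\ via Leung's classification of reflective submanifolds); this is the one genuinely case-dependent, but routine, step of the argument.
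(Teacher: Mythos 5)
Your overall strategy---restrict the almost symmetry $\Phi$ to $\Lp'=T_oX'$ and read off the dichotomy from whether its one-dimensional fixed line lies in $\Lp'$ or in $\Lq=\nu_oX'$---is the same as the paper's, and your eigenspace splittings and the final case distinction are correct. The genuine gap is in the one step you defer: the claim that $\Phi$ preserves $\Lp'$. Your proposed sufficient condition, that the reflective involution $\tau'$ act as $\pm\mathrm{id}$ on every restricted root space $\Lp_\alpha$, fails in precisely the cases where the proposition is applied. Along the chain $FI\subset EII\subset EVI\subset EIX$ the short restricted roots have multiplicities $1$, $2$, $4$, $8$ respectively; for instance, for $X'=FI$ inside $X=EII$ a short root space $\Lp_\alpha$ is $2$-dimensional while $\Lp_\alpha\cap\Lp'$ is $1$-dimensional, so $\tau'$ has both eigenvalues $+1$ and $-1$ on $\Lp_\alpha$ and is not scalar; the same happens at every step of the chain. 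So the ``routine case-by-case verification'' you are counting on cannot succeed. Nor does the observation that $\psi$ is scalar on most root spaces rescue the argument: on the unique root space $\Lp_\beta$ containing the fixed line $L$, $\psi$ equals $-\mathrm{id}$ plus twice the orthogonal projection onto $L$, and this preserves $\Lp_\beta\cap\Lp'$ only if $L$ already lies in $\Lp'$ or in $\Lq$---which is essentially the statement you are trying to prove.

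The paper closes this gap by a different mechanism, and this is where the hypothesis that $X$ is of exceptional type is actually used: by Conlon's theorem on commuting involutions~\cite{C} one may conjugate the involutive automorphism $\nu$ inducing $\Phi$ so that it commutes with $\tau'$; commutativity then makes the $\pm1$-eigenspaces of $\nu$ on $\Lp$ automatically $\tau'$-invariant, with no root-space analysis. Your write-up never uses the exceptional-type hypothesis, which is itself a sign that something is missing. A smaller point: your identification $T_{a_0}M\cap\Lp'=T_{a_0}M'$ is asserted rather than proved; it is true, but it requires the short argument the paper gives (for $v=[x,a_0]\in\Lp_\alpha\cap\Lp'$ with $\alpha(a_0)\neq0$ one shows $\tau'x-x\in(\Lk_\alpha)_{a_0}=0$, hence $x\in\Lk^{\tau'}$).
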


\Pf Let $\Lg=\Lk+\Lp$ be the involutive decomposition under $\sigma$ 
of the Lie algebra $\Lg$ of the transvection group of $X$. 
The reflective property of $X'$ implies that
there is an involutive automorphism $\tau$ of $\Lg$ 
that commutes with $\sigma$ such that the fixed point set $\Lp^\tau=T_oX'$,
and $\Lg^\tau=\Lk^\tau+\Lp^\tau$ is the involutive Lie algebra
associated to the symmetric space~$X'$. 
Since $X'$ has maximal rank, we can take a common Cartan
subspace~$\La\subset\Lp^\tau\subset\Lp$. Write as usual
\[ \Lk=\Lk_0+\sum_{\alpha\in\Delta^+}\Lk_\alpha,\ \Lp=\La+\sum_{\alpha\in\Delta^+}\Lp_\alpha. \]
Consider the $s$-orbit~$M$ of $X$ through a given point $a\in\La$,
and a non-zero 
tangent vector $v=[x,a]\in\Lp_\alpha$ at $a$, where $x\in\Lk_\alpha$. 
In particular, $\alpha(a)\neq0$. 
Assume that $v\in T_oX'$. Then $\tau v=v$, so $\tau x-x$ lies in the 
isotropy algebra $\Lk_a$. Since $\tau|_{\mathfrak a}=\mathrm{id}$, 
we have $\tau(\Lk_\alpha)=\Lk_\alpha$, so in fact $\tau x-x\in(\Lk_\alpha)_a$.
Owing to $\alpha(a)\neq0$, the latter isotropy algebra is zero, which yields
$\tau x=x$. Hence $x\in\Lk^\tau$ and $v$ is tangent to the the $s$-orbit
of $X'$ through~$a$. This shows that $M'$ is the s-orbit of $X'$ through~$a$.

Recall that 
\[ T_aM=\sum_{\alpha:\alpha(a)\neq0}\Lp_\alpha,\
\nu_aM=\La+\sum_{\alpha:\alpha(a)=0}\Lp_\alpha. \]
Since $\tau|_{\mathfrak a}=\mathrm{id}$, we have $\tau(\Lp_\alpha)=\Lp_\alpha$ 
for all $\alpha$. In particular 
\[ T_aM=\underbrace{T_aM^\tau}_{=T_aM'} \oplus T_aM^{-\tau},\ \nu_aM=\nu_aM^\tau \oplus \nu_aM^{-\tau}. \]

Let $\Phi$ be an almost symmetry 
for $M$. Then $\Phi=\nu|_{\mathfrak p}$ 
for some involutive automorphism $\nu$ of $\Lg$. 
Since $X$ is of exceptional type, we may 
conjugate $\nu$ and assume that $\nu$ and $\tau$ commute~\cite{C}. 
It follows that the $\pm1$-eigenspaces of $\nu$ on $\Lp$ 
decompose with respect to $\Lp=\Lp^\tau+\Lp^{-\tau}$. 
Note that $\nu=\mathrm{id}$ on $\nu_aM^\tau=\nu_aM'\subset\Lp^\tau$. 
Next there are two cases: either $\nu=-\mathrm{id}$ on $T_aM'$ 
or $\nu$ has an eigenvalue~$1$ of multiplicity~$1$ in $T_aM'$;
these two cases corresponding accordingly to $M$' being 
symmetric or almost symmetric. This completes the proof of the 
proposition. \EPf

\subsubsection{Exceptional spaces}

There is one irreducible qK symmetric space for each simple Lie group.
We first consider the spaces of exceptional type. Note that $G_2/SO(4)$
and $F_4/[Sp(3)Sp(1)]$ are  spaces of 
maximal rank and thus they have already been dealt with; in particular, 
$F_4/[Sp(3)\cdot Sp(1)]$ gives rise to exactly one
almost symmetric submanifold. 
Next, there is a chain of reflective submanifolds:
\[  F_4/[Sp(3)\cdot Sp(1)]\subset E_6/[SU(6)\cdot SU(2)]
\subset E_7/[Spin(12)\cdot SU(2)] \subset E_8/[E_7\cdot SU(2)], \]
or $FI \subset EII\subset EVI \subset EIX.$
All spaces have rank~$4$ and indeed Dynkin diagram $F_4$. 
The almost symmetric s-orbit of $FI$ is
the s-orbit through $a_0\in\La$ defined by
\[ \alpha_1(a_0)=\sqrt{-1},\ \alpha_2(a_0)=\alpha_3(a_0)=\alpha_4(a_0)=0. \]
And there are no symmetric s-orbits in $FI$ (cf.~\cite{ET}). 
It follows from Proposition~\ref{qK} that, in the other three cases,
it is only the s-orbit through the
same point $a_0\in\La$ that has a chance of being almost symmetric.
That orbit is indeed almost symmetric in those three cases,
because the almost
symmetry for $FI$ is given by~$\Phi=e^{\pi\mathrm{ad}_{a_0}}|_{\mathfrak p}$,
and this clearly
extends to an almost symmetry in the other three spaces (as
in all cases $\Phi|_{{\mathfrak p}_\alpha}=(-1)^{n_1(\alpha)}$,
the root system is $F_4$, and the only $\alpha\in\Delta^+$ for which
$n_1(\alpha)$ is even and nonzero is the highest root $\delta$,
which has multiplicity one).

\subsubsection{Quaternionic Grassmannians}

We consider the case $CII$, that is,
$X=Sp(p+q)/[Sp(p)\times Sp(q)]$, where $p\geq q\geq2$.
This is also the bottom space, except in the case $p=q$ in which
the bottom space is the $\Z_2$-quotient by the map $\beta:x\mapsto x^\perp$
that takes a $q$-plane to its orthogonal complement in $2q$-space.
There are no symmetries of the Satake diagram, so we need
to consider the following two cases only. 

\paragraph{$\bullet\ \Phi$ does not involve $\beta$} Here $\Phi=\Ad_k$ for some
$k\in K_0\cong Sp(1)^q\times Sp(p-q)$ of order~$2$. It follows
that $k=(\pm1,\ldots,\pm1,A)$ for some $A\in Sp(p-q)$.
In particular $\Phi|_{\Lp_{2\theta_i}}=\mathrm{id}$ for all $i=1,\ldots,q$.
Now for at least one index $i$ we must have $\Lp_{2\theta_i}\subset T_{a_0}M$,
and $\Lp_{2\theta_i}$ has dimension~$3$, so there cannot exist an almost
symmetric~$M=Ka_0$.

\paragraph{$\bullet\ \Phi$ involves $\beta$} Here $p=q$ and $\Phi=\Ad_{g_0k}$
for some $k\in K_0$ and $g_0$ as in~(\ref{g0}). 
We suppose
\[ \Lk=\left\{\left(\begin{array}{cc}A&\\&B\end{array}\right)\;:\;A\in\mathfrak{sp}(q), B\in\mathfrak{sp}(q), \right\}, \]
\[ \Lp=\left\{\left(\begin{array}{cc}&Z\\-Z^*&\end{array}\right)\;:\;Z\in\mathfrak{gl}(q,\Q), \right\}, \]
where $Z^*=\bar Z^t$, 
and take
\[ \La=\sum_{i=1}^q\R(E_{i,q+i}-E_{q+i,i}). \]
We have $\Ad_{g_0}(Z)=Z^*$. 

The Dynkin diagram is $C_q$,
where $\theta_i\pm\theta_j$ has multiplicity $4$ and $2\theta_i$ has 
multiplicity $3$. Now $\Phi|_{\Lp_{2\theta_i}}=-\mathrm{id}$, so 
$\Lp_{2\theta_i}\subset T_{a_0}M$ for all $i$. Also
$\Phi|_{\mathfrak p_{\theta_i\pm\theta_j}}=\mp\Ad_k|_{\mathfrak p_{\theta_i\pm\theta_j}}$,
but $\dim\Lp_{\theta_i\pm\theta_j}=4$,
so it is impossible to have an almost symmetric $M=Ka_0$. 

\medskip

This completes the case-by-case analysis of irreducible
symmetric spaces of compact type and the proof of part~(a)
of Theorem~\ref{thm:classif}.

\section{Classification in the case of non-s-orbits}

Herein we deal with almost symmetric homogeneous submanifolds
of Euclidean space that are not orbits of s-representations. 

Let $M$ be a full irreducible compact almost symmetric
homogeneous submanifold of an Euclidean space $V$ which is not an
s-orbit. Owing to Theorem~\ref{thm:struct}(a), $M$ has codimension~$3$.
Denote by $G$ the maximal connected subgroup of $SO(V)$ 
that preserves~$M$. Since representations of cohomogeneity at most two 
are always polar, and polar representations have the same orbits
as an s-representation~\cite{D}, the cohomogeneity of $(G,V)$ must be $3$. 
Now $M$ is a principal orbit of $(G,V)$ and $G$ is the maximal connected
group in its orbit-equivalence class. 

Fix $p\in M$ and let $\sigma$ denote the (unique) almost symmetry at~$p$. 
Write $\hat G$ for the identity component of the group generated by $G$ 
and $\sigma$. By construction of $G$, we have $\hat G=G$, so $\sigma$ 
normalizes $G$. Now conjugation by $\sigma$ defines an involutive 
automorphism of $G$ and hence specifies a possibly non-effective 
symmetric pair $(G,G^\sigma)$. By uniqueness of $\sigma$ we have that 
$k\sigma k^{-1}=\sigma$ for all $k\in G_p$, so the fixed point group $G^\sigma$ 
contains the isotropy group $G_p$, and indeed with codimension one, as
the $(+1)$-eigenspace of $\sigma$ on $T_pM$ is $1$-dimensional.

We run through the list of representations of cohomogeneity~$3$;
the maximal connected Lie groups in their equivalence class are listed
in~\cite[Table~II]{S}. All such representations have
\emph{copolarity} one (see~\cite{GOT} for the irreducible case
and~\cite{PP} for the reducible case), namely, there is a
$4$-dimensional subspace $\Sigma$ of the representation space meeting all
orbits and containing the normal spaces to the principal orbits
it meets. The reflection on $\Sigma$ is a candidate for the
almost symmetry of a principal orbit,
but it preserves the $G$-orbits precisely in the three
cases listed in the theorem, as we now explain.

\subsection{$(SO(n),\R^n\oplus\R^n)$, $n\geq3$}
A principal isotropy group is the lower block $SO(n-2)$-subgroup, and its
fixed point set is the upper $\R^2\oplus\R^2$ and can be taken as $\Sigma$.
The reflection on $\Sigma$ is given by $\mathrm{diag}(1,1,-1,\ldots,-1)$
and normalizes the group, hence it preserves the orbits (since it is
the identity along $\Sigma$ and $\Sigma$ meets all orbits).
It follows that every principal orbit is almost symmetric.

\subsection{$(U(2),V=\C^2\oplus\R^3)$} It is convenient to
replace $U(2)$ by $G=U(1)\times Sp(1)$
and describe this action in terms of quaternions as
\[ (e^{i\theta},q)\cdot(x,y)=(qxe^{-i\theta},qyq^{-1}), \]
where $x\in\Q$, $y\in\Im\Q$, $q\in Sp(1)$, $\theta\in\R$. 
We identify $\C^2\cong\Q$ by mapping $(\alpha,\beta)$ to $\alpha+j\beta$
so that complex conjugation corresponds to conjugation by~$j$.

The slice representation at $(x,y)=(1,0)$ is the circle group
$\{(e^{i\theta},e^{i\theta})\}\subset G$ acting on $\Im H$ as rotation on
the $jk$-plane, so it is clear that every $G$-orbit has a representative
of the form $(x,y)$ with $x\in\R$ and $y\in \R i \oplus \R k$. 

Define
\[ \tau(x,y) = (jxj^{-1},-jyj^{-1})= (-jxj,jyj). \]
Then $\tau$ is an involutive isometry of~$V$ that
normalizes~$G$ and whose fixed point set
\[ V^\tau= \{(x,y)\in V\;|x\in \R\oplus\R j,\ y\in\R i \oplus\R k\}. \]
It follows that $\tau$ preserves the orbits.
Now we can take $\Sigma=V^\tau$ and every
principal orbit is almost symmetric.

\subsection{$(G=U(1)\times SU(n) \times U(1), V=\C^n\oplus\C^n)$, $n\geq2$}.
We describe the action as
\[ (\lambda,A,\mu)\cdot(x,y)=(\lambda Ax, \mu Ay), \]
where $\lambda$, $\mu\in U(1)$, $A\in SU(n)$, $x$, $y\in \C^n$.
We have the isotropy group
\[ G_{(e_1,0)}=\{\lambda^{-1}, \left(\begin{smallmatrix}\lambda&\\&B
  \end{smallmatrix}\right), \mu)\;|\; B\in U(n-1),
\lambda^{-1}=\det B, \mu\in U(1)\}, \]
so that the iterated isotropy group
\begin{eqnarray*}
  (G_{(e_1,0)})_{(0,e_1+e_2)}&=&\{(\lambda^{-1}, \left(\begin{smallmatrix}\lambda&&\\&\lambda&\\&&C\end{smallmatrix}\right), \lambda^{-1})\;|\; C\in U(n-2),
    \lambda^{-2}=\det C \},\\
    &\cong &\Z_2\times U(n-2)
\end{eqnarray*}
is a principal isotropy group for $n\geq3$, but the principal
isotropy group is trivial for $n=2$.

Any symmetric subalgebra of $\Lg=\mathfrak{u}(1)\oplus\mathfrak{su}(n)\oplus\mathfrak{u}(1)$ has to split into symmetric subalgebras of the factors.
For $n\geq3$,
it is clear that $\R\oplus\Lg_{(e_1,e_1+e_2)}\cong\R\oplus\mathfrak{u}(n-2)$
cannot be a symmetric subalgebra of $\Lg$. 

On the other hand, for $n=2$ we consider the compĺex conjugation of
$\C^2\oplus\C^2$ over $\R^2\oplus\R^2$, denoted by $\sigma$. It is clear that
$\sigma$ normalizes $G$. Since $\Sigma=\R^2\oplus\R^2$ meets every
$G$-orbit, $\sigma$ preserves the $G$-orbits and hence every
principal orbit is almost symmetric. 

\subsection{The other cases}
These are ruled
out case-by-case by checking that there is no symmetric subalgebra
of the form $\Lg_p\oplus\R$ of $\Lg$.

\section{Inhomogeneous almost symmetric submanifolds}\label{inhomog}

In this section we discuss some results about
the inhomogeneous case. In particular
we show that a inhomogeneous almost symmetric submanifold is
of cohomogeneity one, and we construct examples of compact
full irreducible almost symmetric submanifolds of Euclidean space
with cohomogeneity one. A fuller investigation will be the
subject of future work.

The simplest example of compact full irreducible almost symmetric
submanifold of Euclidean space of cohomogeneity one is as follows.

\begin{eg}
Let $M$  be an ellipsoid of
$SO(n)$-revolution in $\R^n\times\R\cong\R^{n+1}$, which is not a sphere.  
Then $M$ is foliated by $n-1$-spheres, each such $n-1$-sphere $\Sigma$ 
spans an affine subspace $\langle\Sigma\rangle$ parallel 
to $\R^n\times\{0\}$, and 
the almost symmetry at a point in $\Sigma$,
different from the two vertices, is the reflection 
on the affine $2$-plane $\langle\Sigma\rangle^\perp$. 
Note that~$M$ contains singular orbits of the isometry group $SO(n)$,
namely, the vertices. Of course, at the vertices almost symmetries 
exist and are not unique. 
If we delete the vertices from $M$, we get an 
example of incomplete almost symmetric submanifold. 
\end{eg}

A more general construction is given by the following
family of examples.  

\begin{eg}
Contemplate an irreducible $s$-representation of a compact
connected Lie group $K$ on $\mathbb{R}^n$, and suppose that $K\cdot w$
is a symmetric orbit. Consider also an action of
$G = SO(k_1)\times SO(k_2)\times K$ on 
$\mathbb{R}^{k_1}\oplus \mathbb{R}^{k_2}\oplus \mathbb{R}^n$ in the 
standard way, and fix
$v=e_1 +  e_2 +w$, where 
where $e_i\in \mathbb{R}^{k_i}$ is of unit length 
for $i=1$, $2$. 
Finally, define the curve 
$\gamma (t) = a\cos(t) e_1 + b\sin(t) e_2 + (\cos(2t) +2) w$,
with arbitrary $a$, $b>0$.
Then the image of $\gamma (t)$ is invariant under the reflection 
on $e_i^\perp$ for $i=1$, $2$
(in the space $\mathbb{R} e_1 \oplus \mathbb{R} e_2 \oplus \mathbb{R}w$), 
and it is easy to see that
 $G\cdot \gamma(\mathbb{R})$ is an extrinsic almost symmetric submanifold 
diffeomorphic to $S^{k_1+k_2-1}\times K\cdot w$.
\end{eg}

Finally, we discuss why the cohomogeneity one property of the above examples
is not unexpected. Indeed below we shall consider a more general situation 
of an \emph{intrisically almost symmetric space} $M$, namely,
for each $p\in M$ there exists an (intrinsic) isometry of $M$ of order~$2$
fixing $p$ and having a fixed point set of dimension $1$ in $T_pM$. 

We start with a complete Riemannian manifold $M$, and let $K$ be a (possibly 
disconnected) closed subgroup of its isometry group. 
Then $K$ acts properly on $M$, and the $K$-orbits in $M$ are 
properly embedded submanifolds. Assume that $K$ does not act 
transitively on~$M$, and fix a connected component $N$ of 
a $K$-orbit, so that $N=K^0p$ for some $p\in M$. Let $\mathcal T$ 
be an open tubular neighborhood of $N$ in $M$. We can and will
assume that~$\mathcal T$
has radius~$\epsilon>0$ smaller than half of the distance from $N$ to 
any other connected component of $Kp$. Denote by $\pi:\mathcal T\to N$
the map that assigns a point in $\mathcal T$ to its closest point in $N$.

\begin{lem}
The isotropy groups $K_z\subset K_{\pi(z)}$ for all $z\in\mathcal T$. 
\end{lem}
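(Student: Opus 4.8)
The plan is to show that any isometry $k$ fixing $z$ must also fix the foot point $\pi(z)$, and the argument splits naturally into two steps: first that such a $k$ preserves the component $N$, and second that it then fixes the unique nearest point of $N$ to $z$. Throughout I would exploit only that $K$ acts by isometries and that $\epsilon$ has been chosen to separate the components of the orbit $Kp$.

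First I would take $k\in K_z$, so that $kz=z$. Since the identity component $K^0$ is normal in $K$, the image $kN=kK^0p=K^0(kp)$ is again the $K^0$-orbit of a point of $Kp$, hence a connected component of $Kp$. As $k$ is an isometry fixing $z$, we get $\mathrm{dist}(z,kN)=\mathrm{dist}(kz,kN)=\mathrm{dist}(z,N)<\epsilon$. If $kN\neq N$, then $kN$ is \emph{another} component of $Kp$, and by the defining property of $\epsilon$ the two components satisfy $\mathrm{dist}(N,kN)>2\epsilon$; but the triangle inequality gives $\mathrm{dist}(N,kN)\le\mathrm{dist}(N,z)+\mathrm{dist}(z,kN)<2\epsilon$, a contradiction. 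Hence $kN=N$, that is, $k$ preserves the component $N$.

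Second, because $k$ preserves $N$ and is an isometry fixing $z$, the point $k\pi(z)$ lies in $N$ and satisfies $\mathrm{dist}(k\pi(z),z)=\mathrm{dist}(\pi(z),z)=\mathrm{dist}(z,N)$, so $k\pi(z)$ is also a nearest point of $N$ to $z$; by uniqueness of the foot point inside $\mathcal T$ we conclude $k\pi(z)=\pi(z)$, i.e. $k\in K_{\pi(z)}$. I do not expect a genuine obstacle here: the separation bound on $\epsilon$ is tailored precisely to prevent $k$ from carrying $N$ onto a neighbouring component, and once $N$ is preserved the conclusion is immediate. The only point requiring care is the \emph{uniqueness} of the nearest-point projection $\pi$, which is exactly what makes $\pi$ well defined as a map; this is guaranteed by taking $\epsilon$ below the normal injectivity radius of the properly embedded submanifold $N$, in addition to the separation hypothesis already imposed.
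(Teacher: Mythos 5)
Your proof is correct and follows essentially the same route as the paper's: the paper also argues that $k$ must preserve $N$ because the (unique shortest) geodesic from $z$ to $\pi(z)$ is carried by $k$ to a geodesic of the same length from $z$ to $Kp$, which by the choice of $\epsilon$ forces $kN=N$, and then uniqueness of the foot point gives $k\pi(z)=\pi(z)$. Your version merely spells out the triangle-inequality step behind ``by our choice of $\epsilon$'' and the well-definedness of $\pi$, which the paper leaves implicit.
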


\Pf For $z\in\mathcal T$, there is a unique shortest geodesic segment
$\gamma$ from $z$ to $\pi(z)$. Let $k\in K_z$. Then $k\circ\gamma$ 
is a geodesic segment of the same length from $z$ to $Kp=K\cdot N$. 
By our choice of $\epsilon$, we must have $k\cdot N=N$. 
Due to the uniqueness above, $k\circ\gamma=\gamma$.
In particular, $k\pi(z)=\pi(z)$, as wished. \EPf

\begin{rmk}\label{several}
If $M$ as above is in addition almost symmetric, and we take 
$K$ to contain the almost symmetries at all points, then we see that  
the almost symmetry $\sigma_z$ at $z\in \mathcal T\setminus N$ is unique. 
In fact it must fix the geodesic $\gamma$ through $z=\gamma(0)$ and 
$q=\pi(z)=\gamma(1)$. It also 
follows that 
$d\sigma_z(\gamma'(1))=\gamma'(1)$, and that 
$\sigma_z$ is an almost symmetry at~$q$. This construction indeed 
shows that for all $q\in N$ and all $\xi\in\nu_qN$, there exists an 
almost symmetry of $M$ at $q$ fixing $\xi$. It follows that 
almost symmetries along points of $N$ are not unique if the codimension
of $N$ in $M$ is bigger than one.   
\end{rmk}

We have come to the proof of of the next main result of the paper.

\textit{Proof of Theorem~\ref{thm:inhomog}.}
Let $p\in\Omega$. Then $Kp$ is a principal orbit.
It follows that the slice representation at~$p$ is trivial. If $\sigma$ 
is an almost symmetry at $p$, then $\sigma\in K_p$, so $d\sigma_p$ 
is the identity on $\nu_p(Kp)\subset T_pM$. But the fixed point
set of $d\sigma_p$ in $T_pM$ is one-dimensional, therefore
we must have $\dim\nu_p(Kp)=1$. Since $Kp$ is a principal orbit,
this proves~(a). Also the fix point set of $d\sigma_p$ 
must be $\nu_p(Kp)$, so this completly determines $\sigma$ 
and proves~(b).

Let $N$ be a $K^0$-orbit of the codimension $k\geq2$. Owing to 
Remark~\ref{several}, for all $p\in N$ and $\xi\in\nu_pN$, there 
exists a unique almost symmetry $\sigma_\xi$ at $p$ such that 
$d(\sigma_\xi)_p(\xi)=\xi$; then we necessarily have
$d(\sigma_\xi)_p|_{T_pN}=-\mathrm{Id}_{T_pN}$. 
If $\xi$, $\eta\in\nu_pN$ are 
linearly independent, then $\sigma_\xi\circ\sigma_\eta \in K_p$,
and $d(\sigma_\xi\circ\sigma_\eta)_p$ is a rotation in the $2$
-plane $\Pi$ spanned by $\xi$, $\eta$, and the identity on the orthogonal
complement $\Pi^\perp$ in $T_pM$. If we take $\xi$ and $\eta$ forming 
an angle incommensurate with $\pi$, 
the cyclic group generated by $\sigma_\xi\circ\sigma_\eta$ acting on $\Pi$
is dense in $SO(\Pi)\cong SO(2)$.
Let $H$ denote the closure of the 
subgroup of $K_p$ generated by products $\sigma_\xi\circ\sigma_\eta$,
where  $\xi$, $\eta\in\nu_pN$ are linearly independent.
It is clear that $H$ is a connected subgroup of $K$. 
Since $\Pi$ can be any $2$-plane in $\nu_pN$, the above discussion shows
$H$ acts on $T_pM=T_pN\oplus\nu_pN$ as $\mathrm{Id}\times SO(\nu_pN)$. 
Since the isotropy representation is faithful,
the Lie algebra of $H$ is isomorphic to $\mathfrak{so}_k$.
Note that $H$ is a normal subgroup of $K^0$ (and more generally of the
$K$-stabilizer of $N$), since it precisely consists of those
elements that act trivially on $N$. 
This proves~(c) and~(d), because $N$ is the connected component through~$p$
of the fixed point set of $H$. 
Finally, part~(e) is immediate from the existence of 
the almost symmetry $\sigma_\xi$ of $M$ fixing any given 
$\xi\in\nu_q(Kq)$ and restricting to minus identity along
$T_q(Kq)$, where $q\in M$ is arbitrary. \EPf

\medskip

We will use the following lemmata in the proof
of Theorem~\ref{thm:inhomog-classif}, the proof
of the first one being strightforward. 

\begin{lem}\label{10}
Let $G$, $H$, $K$ be Lie groups such that 
$K$ is a compact subgroup of $G$ and $H$ is compact,
and let $\rho:K\times H\to O(V)$ 
be a representation such that $\rho(K)$ is trivial. Then 
the map 
\[ \Phi: (G\times H)_{K\times H}V \to G/K \times V,\ \Phi[(g,h),v]=(gK,hv) \]
between homogeneous vector bundles over~$G/K$
defines a $G$-equivariant diffeomorphism.
\end{lem}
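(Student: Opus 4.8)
The plan is to construct an explicit inverse to $\Phi$ and then check that both maps are smooth; the hypothesis that $\rho(K)$ is trivial is exactly what makes the inverse well defined, and that is the entire content of the lemma. First I would set
\[ \Psi: G/K\times V\to (G\times H)\times_{K\times H}V,\qquad \Psi(gK,w)=[(g,e),w]. \]
The one substantive point is that $\Psi$ descends to the quotient $G/K$: if $g$ is replaced by $gk$ with $k\in K$, then
\[ [(gk,e),w]=[(g,e)(k,e),w]=[(g,e),\rho(k,e)w]=[(g,e),w], \]
where the last equality uses precisely $\rho(K)=\{\mathrm{id}\}$. This is the crux, and is the only place the triviality hypothesis is needed; without it $\Psi$ would fail to be well defined.

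Next I would verify that $\Phi$ and $\Psi$ are mutually inverse. The composite $\Phi\circ\Psi$ is the identity by a one-line computation, $\Phi[(g,e),w]=(gK,w)$. For $\Psi\circ\Phi$, the defining relation of the associated bundle together with $hv=\rho(e,h)v$ gives
\[ \Psi\bigl(\Phi[(g,h),v]\bigr)=\Psi(gK,hv)=[(g,e),\rho(e,h)v]=[(g,e)(e,h),v]=[(g,h),v], \]
so $\Phi$ is a bijection.

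It then remains to establish smoothness of both maps, which I regard as routine. Writing $q$ for the quotient map $(G\times H)\times V\to (G\times H)\times_{K\times H}V$ --- a surjective submersion, since $K\times H$ acts freely and properly (properly because it is compact) --- the composite $\Phi\circ q$ is the evidently smooth map $((g,h),v)\mapsto(gK,hv)$, whence $\Phi$ is smooth. For $\Psi$ I would argue locally: over a neighborhood $U\subset G/K$ admitting a smooth section $s:U\to G$ of the projection $G\to G/K$, one has $\Psi(gK,w)=q(s(gK),e,w)$ on $U\times V$, a composition of smooth maps, so $\Psi$ is smooth.

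Finally, $G$-equivariance follows directly from the formula for $\Phi$: with $G$ acting by left translation on the first factor of the source and on $G/K$ in the target (and trivially on $V$), one computes $\Phi(g_0\cdot[(g,h),v])=\Phi[(g_0g,h),v]=(g_0gK,hv)=g_0\cdot\Phi[(g,h),v]$. I do not expect any genuine obstacle here; the whole substance of the lemma is the well-definedness of $\Psi$, which rests on $\rho(K)$ being trivial.
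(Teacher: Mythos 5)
Your proof is correct. The paper in fact gives no argument for this lemma (it is introduced with the remark that its proof is ``straightforward''), and what you have written is precisely the standard argument one would supply: the explicit inverse $\Psi(gK,w)=[(g,e),w]$, whose well-definedness is the only point where the hypothesis $\rho(K)=\{\mathrm{id}\}$ enters, together with the routine smoothness check via the quotient submersion and local sections of $G\to G/K$.
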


\begin{lem}\label{20}(\cite{AA,GWZ},\cite[Ch.~IV, Thms.~8.1 and~8.2]{Br})
A cohomogeneity $1$ Riemannian manifold is determined by its 
isotropy groups. 
\end{lem}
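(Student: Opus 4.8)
The plan is to invoke the structure theory of cohomogeneity one actions and reconstruct $M$ from a single normal geodesic together with the slice data at the singular orbits. Let $G$ act isometrically and properly on the complete manifold $M$ with principal orbits of codimension one. First I would analyze the orbit space: since the action is isometric of cohomogeneity one, $M/G$ is a one-dimensional Alexandrov space, hence a $1$-manifold possibly with boundary, namely a circle, a line, a half-line, or a closed interval. A horizontal geodesic $\gamma$ projects to a geodesic of $M/G$ and meets every orbit orthogonally, so the isotropy groups encountered along $\gamma$ recover all the isotropy data: the principal isotropy $H$ at interior points, and the singular isotropy groups $K_\pm$ at the boundary points of $M/G$.

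The core step is the slice theorem. Around a singular orbit $G/K$ (say $K=K_-$), an invariant tubular neighborhood is $G$-equivariantly diffeomorphic to the normal disk bundle $G\times_K D(\nu)$, where $K$ acts on the slice $\nu$ by the slice representation. In cohomogeneity one this slice representation is transitive on the unit sphere $S(\nu)$ with isotropy exactly the principal isotropy $H$, so $S(\nu)\cong K/H$ and the neighborhood is determined by the pair $H\subset K$ inside $G$. I would then split into the topological cases: if $M/G$ is a circle (or a line, or a half-line), $M$ is a principal-orbit bundle over $S^1$, or a single disk bundle over one singular orbit; if $M/G$ is a closed interval, $M$ is the union of the two disk bundles $G\times_{K_-}D_-$ and $G\times_{K_+}D_+$ glued along their common principal boundary $G/H$. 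The gluing is effected by a $G$-equivariant diffeomorphism of $G/H$, and up to equivariant isotopy there is essentially a single such identification, so the equivariant diffeomorphism type of $M$ depends only on the group diagram $H\subset\{K_-,K_+\}\subset G$, i.e.\ on the isotropy groups.

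To upgrade from diffeomorphism to the Riemannian statement, I would observe that a $G$-invariant metric is completely encoded along the normal geodesic: parametrizing $\gamma$ by arclength, the metric has the form $dt^2+g_t$ on the regular part, where $g_t$ is a $G$-invariant metric on the principal orbit $G/H$ depending smoothly on $t$, subject to smoothness (boundary) conditions at the singular parameters dictated by the slice representations of $K_\pm$. Since the family $\{g_t\}$ and the smoothing conditions are governed by the isotropy groups together with the reconstruction above, the equivariant isometry type of $M$ is pinned down by this data, which is precisely the content recorded in \cite{AA,GWZ,Br}.

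The main obstacle is the uniqueness of the equivariant gluing at the singular orbits, together with the verification that the normal-geodesic metric data extends smoothly across them. Concretely, one must check that two manifolds sharing the same group diagram admit a $G$-equivariant diffeomorphism carrying singular orbits to singular orbits, which reduces to the standard but delicate fact that equivariant self-identifications of the boundary principal orbit $G/H$ are unique up to isotopy and are compatible with the disk-bundle collars. Once this equivariant rigidity is in place, transporting the one-parameter metric data along the normal geodesic is routine.
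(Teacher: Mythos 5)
The paper offers no proof of this lemma at all---it is stated purely as a citation to \cite{AA,GWZ} and Bredon---and your outline is precisely the standard argument contained in those references (orbit space is an interval/circle/line/half-line, slice theorem realizes neighborhoods of singular orbits as disk bundles $G\times_{K_\pm}D_\pm$, gluing along $G/H$, metric encoded as $dt^2+g_t$ along a normal geodesic with smoothness conditions at the endpoints), so this is essentially the same approach. The one imprecision is your claim that equivariant self-identifications of $G/H$ are ``unique up to isotopy'': the group of such identifications is $N(H)/H$ acting on the right, which is typically disconnected, and the cited sources handle the gluing ambiguity not by an isotopy but by absorbing the gluing element into a conjugation of one of the singular isotropy groups (so that manifolds with literally the same group diagram are equivariantly diffeomorphic); you correctly flag this as the delicate point, and it is exactly what Bredon's Theorems 8.1--8.2 supply.
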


\textit{Proof of Theorem~\ref{thm:inhomog-classif}.}
Let $M$ be a simply-connected complete almost symmetric space. Denote by $K$
its group of isometries. Suppose that $M$ is inhomogeneous.
Due to Theorem~\ref{thm:inhomog}, $K$ acts on $M$ with cohomogeneity~$1$
and has principal orbits that are symmetric spaces. It follows
that the principal orbit $K/H$ is the product of certain
irreducible symmetric spaces,
and the regular set $M_{reg}$
is a multiply warped product $(0,1)\times_fK/H$ where the metric
on $N$ is scaled independently on each factor.

We also know that there are no exceptional orbits, and that 
$K$ and $K^0$ have the same orbits~\cite[pp.~41-42]{GWZ}.
For the sake of simplicity, in the sequel we work with $K^0$ and drop the
superscript from the notation. 

According to~\cite{Mo}, the orbit space $M/K$ is one of the
following:
\begin{enumerate}
\item[(a)] $(0,1)$;
\item[(b)] $S^1$ ;
\item[(c)] $[0,1)$;
\item[(d)] $[0,1]$.
\end{enumerate}

In case~(a) there are no singular orbits and $M=(0,1)\times_f K/H$.
Case~(b) cannot occur due to the assumption that $M$ is simply-connected. 

The following remark will be useful.
Suppose $q\in M$ is a singular point, and denote by $k\geq2$ the 
codimension of the singular orbit $N=Kq$ in $M$. Then $N$
is a symmetric space and
$K=L\times K_1$, where $K_1\cong SO(k)$.
Also, the isotropy group 
$K_q=L_q\times K_1$, and a tubular neighborhood of
$N$ in $M$ is diffeomorphic to
\[K\times_{K_q}\R^k= L/L_q \times\R^k = N \times \R^k, \]
namely, a product, by Lemma~\ref{10}. 

In case (c) there is only one singular orbit, so we deduce that $M$
is a Riemannian product $L/H \times \R^k$, where $L/H$ is a
simply-connected symmetric space and 
$\R^k$ carries a rotationally invariant metric. 

In case (d) there are two singular orbits, and there are two subcases.
In the first one, $K=K_1\times K_2\times L$, where $K_i\cong SO(k_i)$,
$k_i\geq2$ and $i=1$, $2$. The principal orbits are symmetric
spaces $K_1/H_1\times K_2/H_2\times L/H$, and the singular orbits
are $K_1/H_1\times L/H$ and $K_2/H_2\times L/H$, where $H_i\cong SO(k_i-1)$
for $i=1$, $2$.  

The action of $K_1\times K_2\cong SO(k_1)\times SO(k_2)$ on 
$S^{k_1+k_2-1}\subset\R^{k_1}\times\R^{k_2}$ has cohomogeneity~$1$
with singular isotropy groups $H_1\times K_2$ and $K_1\times H_2$, 
and principal isotropy group $H_1\times H_2$. 
Let now $(K_1\times K_2)\times L$ act on $S^{k_1+k_2-1}\times L/H$
componentwise. This is an action of cohomogeneity~$1$ with
isotropy groups as above and hence $M$ is diffeomorphic
to $S^{k_1+k_2-1}\times L/H$ by Lemma~\ref{10}.
The principal orbits are $S^{k_1}\times S^{k_2}\times L/H$.
The admissible metrics scale the two spheres and each irreducible
factor of $L/H$ independently as a function of $t$, the arc-length parameter
along a normal geodesic.

The second subcase is $K=K_1\times L$, where $K_1\cong SO(k_1)$
for $k_1\geq2$. The principal orbits are symmetric spaces
$K_1/H_1\times L/H$, 
and the two singular orbits are $K_1/H_1\times L/H$, where $H_1\cong SO(k_1-1)$.
Here $M$ is diffeomorphic to $S^{k_1}\times L/H$. The
principal orbits are $S^{k_1-1}\times L/H$, and  
the admissible metrics scale the sphere and each irreducible
factor of $L/H$ independently as a function of $t$, the arc-length parameter
along the normal geodesic.

Hence in either subcase $M$ is as stated in the theorem. \EPf

\providecommand{\bysame}{\leavevmode\hbox to3em{\hrulefill}\thinspace}
\providecommand{\MR}{\relax\ifhmode\unskip\space\fi MR }
\providecommand{\MRhref}[2]{%
  \href{http://www.ams.org/mathscinet-getitem?mr=#1}{#2}
}
\providecommand{\href}[2]{#2}


\end{document}